\renewcommand{\theequation}{\arabic{section}.\arabic{equation}}
\numberwithin{equation}{section}
\newtheorem{remark}{Remark}[section]
\newcommand{\abs}[1]{\left\lvert#1\right\rvert}
\newcommand{\be}{\begin{equation}}
\newcommand{\ee}{\end{equation}}
\newcommand{\rd}{\mathrm{d}}
\renewcommand{\vec}[1]{\mathbf{#1}}
\renewcommand{\b}[1]{\boldsymbol{#1}}
\title{A symmetrized parametric finite element method for
 anisotropic surface diffusion of closed curves}
\author{Weizhu Bao\thanks{Department of Mathematics, National University of Singapore, Singapore, 119076 (matbaowz@nus.edu.sg). This author's research was supported by the Ministry of Education of Singapore grant MOE2019-T2-1-063 (R-146-000-296-112).}
\and Wei Jiang\thanks{School of Mathematics and Statistics {\rm\&} Hubei Key Laboratory of Computational Science, Wuhan University, Wuhan, 430072, P. R. China (jiangwei1007@whu.edu.cn). This author's research was supported by the National Natural Science Foundation of China Nos. 12271414 and 11871384, and the National Key Research and Development Program of China No. 2020YFA0714201.}
\and Yifei Li\thanks{Department of Mathematics, National University of
Singapore, Singapore, 119076 (e0444158@u.nus.edu).}
}
\date{}
\begin{document}
\maketitle


\begin{abstract}
We deal with a long-standing problem about how to design an energy-stable numerical scheme for solving the motion of a closed curve under {\sl anisotropic surface diffusion} with a general anisotropic surface energy $\gamma(\boldsymbol{n})$ in two dimensions,
where $\boldsymbol{n}$ is the outward unit normal vector.
By introducing a novel surface energy matrix $\boldsymbol{Z}_k(\boldsymbol{n})$ which depends on the Cahn-Hoffman $\boldsymbol{\xi}$-vector and a stabilizing function $k(\boldsymbol{n})$, we first reformulate the equation into a conservative form, and derive a new symmetrized variational formulation for anisotropic surface diffusion with weakly or strongly anisotropic surface energies. Then, a semi-discretization in space for the variational formulation is proposed, and its area conservation and energy dissipation properties are proved. The semi-discretization is further discretized in time by an implicit structural-preserving scheme (SP-PFEM) which can rigorously preserves the enclosed area in the fully-discrete level. Furthermore, we prove that the SP-PFEM is unconditionally energy-stable for almost any anisotropic surface energy $\gamma(\boldsymbol{n})$ under a simple and mild condition on $\gamma(\boldsymbol{n})$. For several commonly-used anisotropic surface energies, we construct $\boldsymbol{Z}_k(\boldsymbol{n})$ explicitly. Finally, extensive numerical results are reported to demonstrate the high performance of the proposed scheme.
\end{abstract}


\begin{keywords} Anisotropic surface diffusion,
Cahn-Hoffman $\boldsymbol{\xi}$-vector,  anisotropic surface energy,
parametric finite element method,  structure-preserving, energy-stable,
surface energy matrix
\end{keywords}

\begin{AMS}
65M60, 65M12, 35K55, 53C44
\end{AMS}

\pagestyle{myheadings} \markboth{W.~Bao, W.~Jiang and Y.~Li}
{Symmetrized PFEM for anisotropic surface diffusion}

\section{Introduction}

Anisotropic surface diffusion is an important and common process at material surfaces/interfaces in solids due to different surface lattice orientations. The lattice orientational difference leads to anisotropic surface energy in solid materials. It thus generates an anisotropic evolution process for a solid material. Recently, anisotropic surface diffusion has been regarded as an important kinetic process in surface phase formation, epitaxial growth, heterogeneous catalysis, and many other areas in surface/materials science \cite{Sutton95}. It has been witnessing significant and broader applications in materials science, computational geometry and solid-state physics, such as the evolution of voids in microelectronic circuits \cite{li1999numerical,Suo97}, microstructure evolution in solids \cite{cahn1991stability,Fonseca14}, the smoothing of discrete surfaces \cite{clarenz2000anisotropic}, and solid-state dewetting \cite{Thompson12,Ye10a,Jiang2012,wang2015sharp}.

As shown in Fig.~\ref{fig: illustartion figure}, for a closed curve $\Gamma$ in two dimensions (2D) associated with a given anisotropic surface energy
$\gamma(\boldsymbol{n})$, where $\boldsymbol{n}=(n_1,n_2)^T\in \mathbb{S}^1$ representing the unit outward normal vector, the motion by anisotropic surface diffusion of the curve is described by the following geometric evolution equation \cite{Mullins57, Cahn94}
\begin{equation}
\label{eq:sf2d}
V_n=\partial_{ss}\mu,
\end{equation}
where $V_n$ is the normal velocity, $s$ is the arclength parameter of $\Gamma$, and $\mu:=\mu(s)$ is the chemical potential (or weighted curvature denoted as $\kappa_\gamma:=\kappa_\gamma(s)$ in the literature \cite{taylor1992ii})
generated from the energy functional $W(\Gamma):=\int_\Gamma \gamma(\boldsymbol{n})ds$ via the thermodynamic variation \cite{Cahn94,Bao17}.
It is well-known that the anisotropic surface diffusion has the following two essential geometric properties: (i) the area of the region enclosed by the curve is conserved, and (ii) the free energy (or weighted length) $W(\Gamma)$ of the curve decreases in time \cite{wang2015sharp,Bao17,Kovacs21}. More precisely, the motion by anisotropic surface diffusion is the $H^{-1}$-gradient flow of the free energy (or weighted length) functional $W(\Gamma)$ \cite{taylor1994linking,Mayer01}.
\begin{figure}[hbtp!]
\centering
\includegraphics[width=0.6\textwidth]{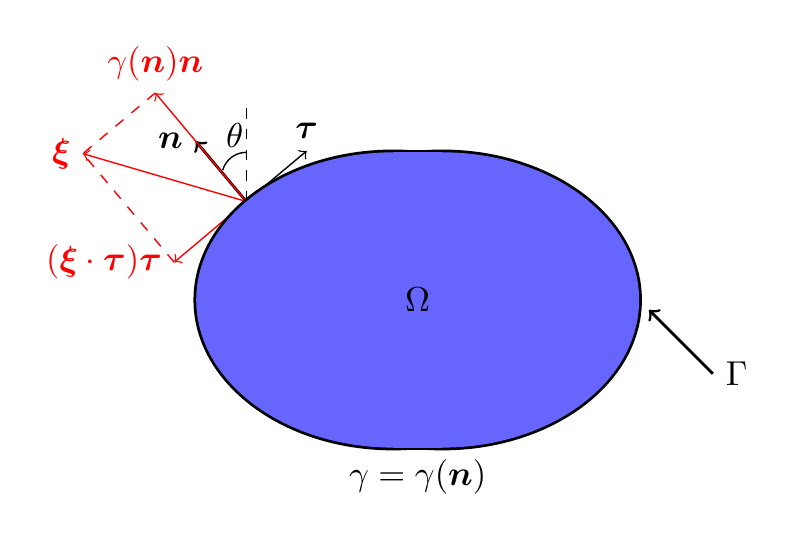}
\caption{An illustration of a closed curve $\Gamma$ in $\mathbb{R}^2$ under anisotropic surface diffusion with an anisotropic surface energy $\gamma(\boldsymbol{n})$, where $\boldsymbol{n}$ is the outward unit normal vector, $\boldsymbol{\tau}$ is the unit tangential vector,  $\boldsymbol{\xi}$ is the Cahn-Hoffman vector in \eqref{The xi-vector}, and $\theta$ is the angle between $\boldsymbol{n}$ and $y$-axis such that $\boldsymbol{n}=(-\sin\theta,\cos\theta)^T$ with $\theta\in[-\pi,\pi]$.}
\label{fig: illustartion figure}
\end{figure}

Let $\gamma(\boldsymbol{p}):\ \mathbb{R}^2\to {\mathbb R}$ be a homogeneous extension of the anisotropic surface energy
$\gamma(\boldsymbol{n}):\ \mathbb{S}^1\to {\mathbb R}^+$ satisfying:
(i) $\gamma(\boldsymbol{p})|_{\boldsymbol{p}=\boldsymbol{n}}
=\gamma(\boldsymbol{n})$ for $\boldsymbol{n}\in \mathbb{S}^1$, and  (ii) $\gamma(c\boldsymbol{p})=c\gamma(\boldsymbol{p})$ for $c>0$ and $\boldsymbol{p}\in \mathbb{R}^2$. A typical homogeneous extension is widely used in the literature as \cite{jiang2019sharp,deckelnick2005computation}\color{black}
\begin{equation}\label{gamma p}
\gamma(\boldsymbol{p}):=\left\{\begin{aligned}&|\boldsymbol{p}|\gamma\left(\frac{\boldsymbol{p}}
{|\boldsymbol{p}|}\right),\qquad \forall \boldsymbol{p}=(p_1,p_2)^T\in \mathbb{R}^2_*:=\mathbb{R}^2\setminus \{\boldsymbol{0}\},\\
&0, \qquad\qquad\qquad\quad \boldsymbol{p}=\boldsymbol{0},\end{aligned}\right.
\end{equation}\color{black}
where $|\boldsymbol{p}|=\sqrt{p_1^2+p_2^2}$.
Then the Cahn-Hoffman $\boldsymbol{\xi}$-vector introduced by Cahn and Hoffman is mathematically given by \cite{hoffman1972vector,wheeler1999cahn}
\begin{equation}\label{The xi-vector}
\boldsymbol{\xi}:=\boldsymbol{\xi}(\boldsymbol{n})=\nabla \gamma(\boldsymbol{p})\big|_{\boldsymbol{p}=\boldsymbol{n}}
=\gamma(\boldsymbol{n})\boldsymbol{n}+(\boldsymbol{\xi}\cdot\boldsymbol{\tau})
\boldsymbol{\tau}, \qquad \forall\boldsymbol{n}\in \mathbb{S}^1,
\end{equation}
where $\boldsymbol{\tau}=\boldsymbol{n}^\perp$ is the unit tangential vector
with the notation $^\perp$ denoting clockwise rotation by $\frac{\pi}{2}$
(cf. Fig. \ref{fig: illustartion figure}). Furthermore, the chemical potential $\mu$ (or weighted curvature) and the Hessian matrix $\bf{H}_{\gamma}(\boldsymbol{n})$ are defined as \cite{jiang2019sharp}
\begin{equation}\label{def of mu, original}
\mu:=-\boldsymbol{n}\cdot \partial_s \boldsymbol{\xi}^{\perp},\qquad \bf{H}_{\gamma}(\boldsymbol{n}):=
\nabla\nabla\gamma(\boldsymbol{p})\big|_{\boldsymbol{p}=\boldsymbol{n}}, \qquad \forall\boldsymbol{n}\in \mathbb{S}^1.
\end{equation}
We remark here that ${\bf H}_{\gamma}(\boldsymbol{n})\boldsymbol{n}
=\boldsymbol{0}$
and thus $0$ is an eigenvalue of $\bf{H}_{\gamma}(\boldsymbol{n})$ and $\boldsymbol{n}$ is a corresponding eigenvector. We denote the other eigenvalue of $\bf{H}_{\gamma}(\boldsymbol{n})$  as $\lambda({\boldsymbol{n}})\in{\mathbb R}$. \color{black}The Frank diagram $\mathcal{F}$ of $\gamma(\boldsymbol{n})$ is defined as $\mathcal{F}:=\{\boldsymbol{p}\in \mathbb{R}^2|\,\gamma(\boldsymbol{p})\leq 1\}$, i.e., $1/\gamma$ plot (see Page 190 in \cite{deckelnick2005computation}). \color{black}

Let $\Gamma:= \Gamma (t)$ be parameterized by $\vec X:=\vec X(s,t)=(x(s,t),y(s,t))^T\in{\mathbb R}^2$ with $t$ representing the time and $s$ denoting the arclength parametrization of $\Gamma$
(cf. Fig. \ref{fig: illustartion figure}), then via the Cahn-Hoffman $\boldsymbol{\xi}$-vector in \eqref{The xi-vector}, the anisotropic surface diffusion equation \eqref{eq:sf2d} for $\Gamma$ is described as follows \cite{jiang2019sharp}:
\begin{subequations}
\label{eqn:original formulation gamma(n) 1}
\begin{numcases}{}
\label{eqn:original gamma(n) aniso eq1 1}
\partial_t\boldsymbol{X} =\partial_{ss}\mu\, \boldsymbol{n}, \qquad 0<s<L(t), \quad t>0,  \\
\label{eqn:original gamma(n) aniso eq2 1}
\mu = -\boldsymbol{n}\cdot\partial_s \boldsymbol{\xi}^{\perp}, \quad
\boldsymbol{\xi}=\nabla \gamma(\boldsymbol{p})\big|_{\boldsymbol{p}=\boldsymbol{n}},
\end{numcases}
\end{subequations}
where $L(t)=\int_{\Gamma(t)} ds$ is the length of $\Gamma(t)$, and
\begin{equation}\label{ntauform}
\boldsymbol{\tau}=\partial_{s} \boldsymbol{X}=\boldsymbol{n}^\perp, \qquad
\boldsymbol{n}=-\partial_{s}\boldsymbol{X}^\perp=-\boldsymbol{\tau}^\perp.
\end{equation}
The initial data for
\eqref{eqn:original formulation gamma(n) 1} is given as
\begin{equation}\label{init}
\vec X(s,0)=\vec X_0(s)=(x_0(s),y_0(s))^T, \qquad 0\le s\le L_0,
\end{equation}
where $L_0$ represents the length of the initial curve $\Gamma_0=\Gamma(0)$.

When $\gamma(\boldsymbol{n})\equiv 1$,
it is named as {\sl isotropic} surface energy; in the isotropic case,
$\gamma(\boldsymbol{p})=|\boldsymbol{p}|$ in \eqref{gamma p},
$\boldsymbol{\xi}=\boldsymbol{n}$ in \eqref{The xi-vector}, and
$\mu=\kappa$ and ${\bf H}_{\gamma}(\boldsymbol{n})\equiv I_2-\boldsymbol{n}\boldsymbol{n}^T$ in \eqref{def of mu, original} with $\kappa$ the curvature and $I_2$ the $2\times 2$ identity matrix and $\lambda(\boldsymbol{n})\equiv 1$,
and thus \eqref{eqn:original formulation gamma(n) 1} collapses to the (isotropic) surface diffusion \cite{barrett2007parametric,Mullins57,Jiang,bao2020energy}.
In contrast, when $\gamma(\boldsymbol{n})$ is not a constant,
it is named as {\sl anisotropic} surface energy; and in the anisotropic case,
when $\boldsymbol{\tau}^T{\bf H}_{\gamma}(\boldsymbol{n})\boldsymbol{\tau}\ge
0$ for all $\boldsymbol{n}\in\mathbb{S}^1$ with $\boldsymbol{\tau}=\boldsymbol{n}^\perp$ ($\Leftrightarrow \lambda({\boldsymbol{n}})\ge0$ for $\boldsymbol{n}\in\mathbb{S}^1$
$\Leftrightarrow \tilde\gamma(
\theta):=\hat\gamma(\theta)+\hat\gamma^{\prime\prime}(\theta)\ge0$ for all $
\theta\in[-\pi,\pi]$ with $\hat\gamma(\theta):=\gamma(\boldsymbol{n})=\gamma(-\sin\theta,\cos\theta)$ $\Leftrightarrow$ the Frank diagram of $\gamma(\boldsymbol{n})$ is convex),
\color{black}  it is called as {\sl weakly anisotropic}; otherwise, when $\boldsymbol{\tau}^T{\bf H}_{\gamma}(\boldsymbol{n})\boldsymbol{\tau}$ changes sign for $\boldsymbol{n}\in\mathbb{S}^1$ ($\Leftrightarrow \lambda({\boldsymbol{n}})$ changes sign for $\boldsymbol{n}\in\mathbb{S}^1$
 $\Leftrightarrow \tilde\gamma(
\theta)$ changes sign for $\theta\in[-\pi,\pi]$  $\Leftrightarrow$ the Frank diagram of $\gamma(\boldsymbol{n})$ is not convex), it is called as {\sl strongly anisotropic}.

\color{black}
 Some commonly-used anisotropic surface energies $\gamma(\boldsymbol{n})$ are summarized as below:

(i) the \color{black} Riemannian-like \color{black} metric (also called as BGN) anisotropic surface energy \cite{barrett2008numerical,Barrett2020}
\begin{equation}\label{ellipsoidal}
	\gamma(\boldsymbol{n})=\sum_{l=1}^L\gamma_l(\boldsymbol{n})=
\sum_{l=1}^L\sqrt{\boldsymbol{n}^T\boldsymbol{G}_l\boldsymbol{n}},\qquad \forall\boldsymbol{n}\in \mathbb{S}^1,
\end{equation}
where $\boldsymbol{G}_l\in{\mathbb R}^{2\times 2}, l=1,\ldots,L$, are symmetric and positive definite matrices;

(ii) the $l^r$-norm metric anisotropic surface energy  \cite{deckelnick2005computation}
\begin{equation} \label{lrnormase}
  \gamma(\boldsymbol{n})=\left\|\boldsymbol{n}\right\|_{l^r}=
  \left(|n_1|^r+|n_2|^r\right)^{\frac{1}{r}},\qquad \forall\boldsymbol{n}=(n_1,n_2)^T\in \mathbb{S}^1,
\end{equation}
where $1< r<\infty$;

(iii) the $m$-fold anisotropic surface energy \cite{bao2017parametric}
\begin{align}\label{kfold}
  \gamma(\boldsymbol{n})=1+\beta \cos(m(\theta-\theta_0)), \qquad \forall\boldsymbol{n}=(n_1,n_2)^T=(-\sin\theta,\cos\theta)^T\in \mathbb{S}^1,
\end{align}
where $m=2,3,4,6$, $\theta_0\in [0,\pi]$ is a phase shift angle, and $\beta\ge0$ controls the degree of the anisotropy;

(iv) the regularized $l^1$-norm metric anisotropic surface energy which can be viewed as
a regularization for the non-smooth surface energy $\gamma(\boldsymbol{n})=|n_1|+|n_2|$
 \cite{barrett2008numerical,Barrett11}
\begin{align} \label{l1normase}
  \gamma(\boldsymbol{n})=\sqrt{n_1^2+\varepsilon^2n_2^2}
  +\sqrt{\varepsilon^2n_1^2+n_2^2},
  \qquad \forall\boldsymbol{n}=(n_1,n_2)^T\in \mathbb{S}^1,
\end{align}
where $0<\varepsilon\ll 1$ is a small `artificial' regularization parameter. \color{black}This regularization can be treated as a special case of \eqref{ellipsoidal}.\color{black}

\noindent For the convenience of readers, we list $\gamma(\boldsymbol{p})$,
$\boldsymbol{\xi}({\boldsymbol{n}})$, $\lambda({\boldsymbol{n}})$ and ${\bf H}_{\gamma}(\boldsymbol{n})$ of the above surface energies in Appendix A.


Different numerical methods have been proposed for the isotropic/anisotropic surface diffusion, such as the marker-particle method \cite{wong2000periodic,du2010tangent}, the finite element method via graph representation \cite{bansch2004surface,deckelnick2005computation,deckelnick2005fully}, the $\theta$-$L$ formulation method \cite{huang2021theta}, the discontinuous Galerkin finite element method \cite{xu2009local}, and the parametric finite element method (PFEM) \cite{barrett2007parametric,barrett2008variational,Hausser07,bao2017parametric,jiang2019sharp,li2020energy}. Among these methods, the PFEM performs the best in terms of accuracy and efficiency as well as mesh quality in practical computations via reformulating
\eqref{eqn:original formulation gamma(n) 1} as
\cite{jiang2019sharp}
\begin{subequations}
\label{eqn:pfem formulation gamma(n) 1}
\begin{numcases}{}
\label{eqn:pfem gamma(n) aniso eq1 1}
\boldsymbol{n}\cdot\partial_t\boldsymbol{X} =\partial_{ss}\mu, \qquad 0<s<L(t), \quad t>0,  \\
\label{eqn:pfem gamma(n) aniso eq2 1}
\mu\, \boldsymbol{n}= -\partial_s \boldsymbol{\xi}^{\perp}, \quad
\boldsymbol{\xi}=\nabla \gamma(\boldsymbol{p})\big|_{\boldsymbol{p}=\boldsymbol{n}}.
\end{numcases}
\end{subequations}
When $\gamma(\boldsymbol{n})\equiv 1$ (i.e., isotropic surface energy),
noting $\mu=\kappa$ and $\vec n= -\partial_s\vec X^\perp$, then
\eqref{eqn:pfem gamma(n) aniso eq2 1} collapses to $\kappa\,\boldsymbol{n}= -\partial_{ss}\boldsymbol{X}$. In this case, the PFEM is semi-implicit,
unconditionally energy-stable, and enjoys asymptotic equal mesh distribution \cite{barrett2007parametric} and thus there is no need to re-mesh during time evolution. Very recently, a structure-preserving PFEM (SP-PFEM) was proposed
for the surface diffusion \cite{bao2021structurepreserving,bao2022volume}. However, when the PFEM is extended directly to simulate anisotropic surface diffusion, many good properties are no longer preserved, especially for the unconditional energy stability, which can be preserved only for a very special
\color{black}Riemannian-like \color{black} metric anisotropic surface energy in \eqref{ellipsoidal} with a modified variational formulation \cite{barrett2008numerical}. Recently,
by reformulating \eqref{eqn:pfem gamma(n) aniso eq2 1} into a conservative form, an energy-stable PFEM
was designed for weakly anisotropic surface diffusion under a very strong condition on $\hat \gamma(\theta)=\gamma(-\sin\theta, \cos\theta)$ \cite{li2020energy}. To our best knowledge, it is still an open question to design an unconditionally energy-stable scheme for solving the anisotropic surface diffusion
\eqref{eqn:pfem formulation gamma(n) 1} with any form of $\gamma(\boldsymbol{n})$.

The objective of this paper is to propose an unconditionally energy-stable SP-PFEM for solving the anisotropic surface diffusion
\eqref{eqn:pfem formulation gamma(n) 1} with the surface energy $\gamma(\boldsymbol{n})$ satisfying a relatively mild condition as
\begin{equation}\label{engstabgmp}
\gamma(-\boldsymbol{n})=\gamma(\boldsymbol{n}), \quad \forall \boldsymbol{n}\in \mathbb{S}^1, \qquad \gamma(\boldsymbol{p})\in C^2(\mathbb{R}^2\setminus \{\boldsymbol{0}\}).
\end{equation}
We first reformulate \eqref{eqn:pfem gamma(n) aniso eq2 1} into a conservative and self-adjoint form by introducing a novel symmetric positive definite surface energy matrix $\boldsymbol{Z}_k(\boldsymbol{n})$ depending on the Cahn-Hoffman $\boldsymbol{\xi}$-vector and a stabilizing function $k(\boldsymbol{n})$, and then derive a new symmetrized variational formulation for the anisotropic surface diffusion
\eqref{eqn:pfem formulation gamma(n) 1}. The symmetrized variational formulation is first discretized in space by PFEM and then
discretized in time by an implicit SP-PFEM which preserves the area in the fully-discrete level. Under the  simple and mild condition
\eqref{engstabgmp} on $\gamma(\boldsymbol{n})$,
we rigorously prove that the SP-PFEM is energy dissipative and thus is unconditionally energy stable for almost all anisotropic surface energy $\gamma(\boldsymbol{n})$ arising in practical applications, including both weakly and strongly anisotropic surface energies.

The remainder of this paper is structured as follows: In section 2, we first introduce the surface energy matrix $\boldsymbol{Z}_k(\boldsymbol{n})$, propose a new symmetrized variational formulation and show its area conservation and energy dissipation. In section 3, we present a semi-discretization
in space by PFEM and a full-discretization by an implicit SP-PFEM for the weak formulation. In section 4, we prove the unconditional energy-stability of SP-PFEM under
the condition \eqref{engstabgmp} on $\gamma(\boldsymbol{n})$. In section 5, numerical results are given to demonstrate
the high performance of the proposed SP-PFEM. Finally, we draw some conclusions in section 6.

\section{A new symmetrized variational formulation and its properties}

In this section, we present a new conservative and self-adjoint
formulation of \eqref{eqn:pfem gamma(n) aniso eq2 1} and a new symmetrized variational formulation of \eqref{eqn:pfem formulation gamma(n) 1}, and
prove the area conservation and energy dissipation of the new formulation.

\subsection{A symmetric positive definite surface energy matrix}
Introducing a symmetric surface energy matrix $\boldsymbol{Z}_k(\boldsymbol{n})$ as
  \begin{equation}\label{def of the energy matrix Z_k(n)}
  	\boldsymbol{Z}_k(\boldsymbol{n})=\gamma(\boldsymbol{n})I_2-\boldsymbol{n} \boldsymbol{\xi}(\boldsymbol{n})^T-\boldsymbol{\xi}(\boldsymbol{n}) \boldsymbol{n}^T+k(\boldsymbol{n})\boldsymbol{n}\boldsymbol{n}^T,
  \qquad \forall\boldsymbol{n}\in \mathbb{S}^1,
  \end{equation}
where $k(\boldsymbol{n}):\ \mathbb{S}^1\to {\mathbb R}^+$ is a
stabilizing function to be determined later, then we have

\begin{lemma}[symmetric and conservative form] With the symmetric surface energy matrix $\boldsymbol{Z}_k(\boldsymbol{n})$ in \eqref{def of the energy matrix Z_k(n)},
the anisotropic surface diffusion \eqref{eqn:pfem formulation gamma(n) 1}
can be reformulated as
\begin{subequations}
\label{eqn:pfemm formulation gamma(n)}
\begin{numcases}{}
\label{eqn:pfemm gamma(n) aniso eq1}
\boldsymbol{n}\cdot \partial_t\boldsymbol{X} =\partial_{ss}\mu,   \\
\label{eqn:pfemm gamma(n) aniso eq2}
\mu\,\boldsymbol{n}=-\partial_{s}(\boldsymbol{Z}_k(\boldsymbol{n})\partial_s\boldsymbol{X}).
\end{numcases}
\end{subequations}
\end{lemma}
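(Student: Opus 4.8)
The plan is to notice first that equation \eqref{eqn:pfemm gamma(n) aniso eq1} is literally identical to \eqref{eqn:pfem gamma(n) aniso eq1 1}, so the entire content of the lemma collapses to showing that the two second equations coincide, i.e.\ that $-\partial_{s}(\boldsymbol{Z}_k(\boldsymbol{n})\partial_s\boldsymbol{X}) = -\partial_s\boldsymbol{\xi}^{\perp}$. Both sides are pure $s$-derivatives, so it suffices to establish the pointwise vector identity
\[
\boldsymbol{Z}_k(\boldsymbol{n})\,\partial_s\boldsymbol{X} = \boldsymbol{\xi}^{\perp},
\]
after which applying $\partial_s$ and negating turns \eqref{eqn:pfem gamma(n) aniso eq2 1} into \eqref{eqn:pfemm gamma(n) aniso eq2}. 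Since this identity is purely algebraic and holds at each point of $\Gamma$, I expect no analytic difficulty whatsoever; the whole argument is a one-line computation carried out twice.

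First I would use $\partial_s\boldsymbol{X}=\boldsymbol{\tau}$ from \eqref{ntauform} and expand $\boldsymbol{Z}_k(\boldsymbol{n})\boldsymbol{\tau}$ term by term from the definition \eqref{def of the energy matrix Z_k(n)}. The orthogonality $\boldsymbol{n}\cdot\boldsymbol{\tau}=0$ does all the work: the contributions $\boldsymbol{\xi}\boldsymbol{n}^T\boldsymbol{\tau}=(\boldsymbol{n}\cdot\boldsymbol{\tau})\boldsymbol{\xi}$ and $k(\boldsymbol{n})\boldsymbol{n}\boldsymbol{n}^T\boldsymbol{\tau}=k(\boldsymbol{n})(\boldsymbol{n}\cdot\boldsymbol{\tau})\boldsymbol{n}$ both vanish, while $\gamma(\boldsymbol{n})I_2\boldsymbol{\tau}=\gamma(\boldsymbol{n})\boldsymbol{\tau}$ and $\boldsymbol{n}\boldsymbol{\xi}^T\boldsymbol{\tau}=(\boldsymbol{\xi}\cdot\boldsymbol{\tau})\boldsymbol{n}$ survive. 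This gives $\boldsymbol{Z}_k(\boldsymbol{n})\boldsymbol{\tau}=\gamma(\boldsymbol{n})\boldsymbol{\tau}-(\boldsymbol{\xi}\cdot\boldsymbol{\tau})\boldsymbol{n}$. It is worth highlighting that the stabilizing term drops out entirely, so the reformulation is valid for \emph{any} admissible $k(\boldsymbol{n})$; the function $k$ influences only the later variational/discrete structure, not the underlying PDE.

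Next I would evaluate $\boldsymbol{\xi}^{\perp}$ directly from the orthogonal decomposition $\boldsymbol{\xi}=\gamma(\boldsymbol{n})\boldsymbol{n}+(\boldsymbol{\xi}\cdot\boldsymbol{\tau})\boldsymbol{\tau}$ in \eqref{The xi-vector}, using the rotation rules $\boldsymbol{n}^{\perp}=\boldsymbol{\tau}$ and $\boldsymbol{\tau}^{\perp}=-\boldsymbol{n}$, both of which follow from \eqref{ntauform}. This yields $\boldsymbol{\xi}^{\perp}=\gamma(\boldsymbol{n})\boldsymbol{\tau}-(\boldsymbol{\xi}\cdot\boldsymbol{\tau})\boldsymbol{n}$, which matches the previous expression term for term, establishing $\boldsymbol{Z}_k(\boldsymbol{n})\boldsymbol{\tau}=\boldsymbol{\xi}^{\perp}$. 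There is no genuine obstacle in this proof; the only point demanding care is the sign convention for $^{\perp}$ (clockwise rotation), so that one correctly obtains $\boldsymbol{\tau}^{\perp}=-\boldsymbol{n}$ rather than $+\boldsymbol{n}$, since an error there would spoil the cancellation. Substituting the identity into \eqref{eqn:pfem gamma(n) aniso eq2 1}, together with the already-identical first equation, completes the reformulation \eqref{eqn:pfemm formulation gamma(n)}.
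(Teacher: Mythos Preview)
Your proposal is correct and follows essentially the same approach as the paper: both reduce the lemma to the pointwise identity $\boldsymbol{Z}_k(\boldsymbol{n})\boldsymbol{\tau}=\boldsymbol{\xi}^{\perp}$, verify it by expanding $\boldsymbol{Z}_k(\boldsymbol{n})\boldsymbol{\tau}$ term by term using $\boldsymbol{n}\cdot\boldsymbol{\tau}=0$, and compare with $\boldsymbol{\xi}^{\perp}=\gamma(\boldsymbol{n})\boldsymbol{\tau}-(\boldsymbol{\xi}\cdot\boldsymbol{\tau})\boldsymbol{n}$ obtained from \eqref{The xi-vector}. The only cosmetic difference is the order in which the two sides are computed.
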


\begin{proof} From \eqref{The xi-vector}, noting \eqref{ntauform}, we get
\begin{equation}\label{xiperp23}
\boldsymbol{\xi}^{\perp}=\gamma(\boldsymbol{n})\boldsymbol{n}^\perp+
(\boldsymbol{\xi}\cdot\boldsymbol{\tau})\boldsymbol{\tau}^\perp
=\gamma(\boldsymbol{n})\boldsymbol{\tau}-
(\boldsymbol{\xi}\cdot\boldsymbol{\tau})\boldsymbol{n}.
\end{equation}
From \eqref{def of the energy matrix Z_k(n)}, noticing \eqref{ntauform} and
\eqref{xiperp23}, and using $\boldsymbol{n}\cdot\boldsymbol{\tau}=0$,
we get
\begin{align}\label{reformulation of Zk, auxillary eq 2}
\boldsymbol{Z}_k(\boldsymbol{n})\partial_s\boldsymbol{X}&=
\boldsymbol{Z}_k(\boldsymbol{n})\boldsymbol{\tau}=
(\gamma(\boldsymbol{n})I_2
-\boldsymbol{n}\boldsymbol{\xi}^T-\boldsymbol{\xi}\boldsymbol{n}^T
+k(\boldsymbol{n})\boldsymbol{n}
\boldsymbol{n}^T)\boldsymbol{\tau}\nonumber\\
&=\gamma(\boldsymbol{n})\boldsymbol{\tau}-
(\boldsymbol{\xi}\cdot\boldsymbol{\tau})\boldsymbol{n}+
(\boldsymbol{n}\cdot\boldsymbol{\tau}) \left(k(\boldsymbol{n})\boldsymbol{n}-\boldsymbol{\xi}\right)=
\boldsymbol{\xi}^{\perp}.
\end{align}
Plugging \eqref{reformulation of Zk, auxillary eq 2} into
\eqref{eqn:pfem formulation gamma(n) 1}, we obtain
\eqref{eqn:pfemm formulation gamma(n)} immediately.
\end{proof}


\begin{remark}
When $\gamma(\boldsymbol{n})\equiv 1$ and by taking $k(\boldsymbol{n})\equiv 2$ in \eqref{def of the energy matrix Z_k(n)}, we have $\mu=\kappa$ and $\boldsymbol{\xi}=\boldsymbol{n}$, and thus $\boldsymbol{Z}_k(\boldsymbol{n})\equiv I_2$.  Then \eqref{eqn:pfemm formulation gamma(n)} collapses to the standard formulation by PFEM for surface diffusion \cite{barrett2007parametric}. Similarly, when $\gamma(\boldsymbol{n})$ is chosen as the \color{black}Riemannian-like \color{black} metric anisotropic surface energy \eqref{ellipsoidal}, by taking $k(\boldsymbol{n})=\sum\limits_{l=1}^L\gamma_l(\boldsymbol{n})^{-1}\,\text{Tr}(\boldsymbol{G}_l)$ with $\text{Tr}(\boldsymbol{G}_l)$ denoting the trace of $G_l$, then
\eqref{eqn:pfemm formulation gamma(n)} collapses to the formulation used in \cite{barrett2008numerical}.
A similar formulation but without the symmetrizing term $-\boldsymbol{\xi}(\boldsymbol{n})\boldsymbol{n}^T$ and the stabilizing term $k(\boldsymbol{n})$
can also be found in ~\cite[(1.18)]{barrett2008variational}.
\end{remark}

\subsection{The variational formulation}
Let $\mathbb{T}=\mathbb{R}\slash\mathbb{Z}=[0,1]$ be the periodic unit interval and we parameterize the evolution curves $\Gamma(t)$ as
\begin{equation}
\Gamma(t):=\boldsymbol{X}(\color{black}\mathbb{T}\color{black},t),\,\, \boldsymbol{X}(\rho,t):=(x(\rho,t),~y(\rho,t))^T:\; \mathbb{T}\times \mathbb{R}^+\;\rightarrow \;\mathbb{R}^2.
\end{equation}
The arclength parameter $s$ is computed by $s(\rho,t)=\int_0^\rho \abs{\partial_q\vec{X}(q,t)}\,\rd q$ with its derivative $\partial_\rho s=\abs{\partial_\rho\vec{X}}$.
By the introduced time-independent variable $\rho$, the evolving curve $\Gamma(t)$ can then be parameterized over a fixed domain $\rho\in\mathbb{T}=[0,1]$. We do not distinguish the two parameterization $\vec X(\rho,t)$ and $\vec X(s,t)$ for $\Gamma(t)$ if there is no ambiguity.
We also introduce the usual Sobolev space as
\begin{equation}
L^2(\mathbb{T})=\left\{u: \mathbb{T}\rightarrow \mathbb{R} \ |\   \int_{\mathbb{T}} |u(\rho)|^2 \, d\rho <+\infty \right\},
\end{equation}
equipped with the weighted $L^2$-inner product with respect to the closed curve $\Gamma(t)$
\be\label{inner product torus}
\big(u,v\big)_{\Gamma(t)}:=\int_{\Gamma(t)}u(s)\,v(s)ds=\int_{\mathbb{T}}u(\rho) v(\rho) \partial_\rho s(\rho,t)\,d\rho,\quad \forall\;u,v\in L^2(\mathbb{T}),
\ee\color{black}
which can be easily extended to $[L^2(\mathbb{T})]^2$. Here, we always assume that $\partial_\rho s(\rho,t)$ is bounded for all $t$. Moreover, the Sobolev space $H^1(\mathbb{T})$ is given as
\begin{align}
H^1(\mathbb{T}):=\left\{u: \mathbb{T}\rightarrow \mathbb{R}, \;{\rm and}\;u\in L^2(\mathbb{T}),\; \partial_{\rho} u\in L^2(\mathbb{T})\right\}.
\end{align}

Multiplying a test function $\varphi(\rho)\in H^1(\mathbb{T})$ to \eqref{eqn:pfemm gamma(n) aniso eq1}, and then integrating over $\Gamma(t)$ and taking integration by parts,  we have
\begin{eqnarray}\label{varf1 continuous torus}
\Bigl(\boldsymbol{n}\cdot\partial_{t}\boldsymbol{X},
\varphi\Bigr)_{\Gamma(t)}=
\Bigl(\partial_{ss}\mu,\varphi\Bigr)_{\Gamma(t)}
=-\Bigl(\partial_{s}\mu,
\partial_s\varphi\Bigr)_{\Gamma(t)}.
\end{eqnarray}
Similarly, by multiplying a test function $\boldsymbol{\omega}=(\omega_1,\omega_2)^T\in [H^1(\mathbb{T})]^2$ to \eqref{eqn:pfemm gamma(n) aniso eq2}, we obtain
\begin{eqnarray}\label{varf2 continuous torus}
\Bigl(\mu\,\boldsymbol{n}, \boldsymbol{\omega}\Bigr)_{\Gamma(t)}
=\Bigl(-\partial_{s}(\boldsymbol{Z}_k(\boldsymbol{n})\partial_s\boldsymbol{X}), \boldsymbol{\omega}\Bigr)_{\Gamma(t)}
=\Bigl( \boldsymbol{Z}_k(\boldsymbol{n})\partial_s\boldsymbol{X},\partial_s
\boldsymbol{\omega}\Bigr)_{\Gamma(t)}.
\end{eqnarray}
By combining the two weak formulations \eqref{varf1 continuous torus} and \eqref{varf2 continuous torus}, we now get the novel symmetrized variational formulation for
the anisotropic surface diffusion \eqref{eqn:pfemm formulation gamma(n)}
(or \eqref{eqn:original formulation gamma(n) 1}) with the initial condition
\eqref{init}. More precisely, for a given initial curve $\Gamma_0:=\Gamma(0)=\boldsymbol{X}(\color{black}\mathbb{T}\color{black},0)$ with $\boldsymbol{X}(\rho,0)=\boldsymbol{X}_0(L_0\rho)\in [H^1(\mathbb{T})]^2$\color{black}, find the solution $\Gamma(t):=\color{black}\boldsymbol{X}(\mathbb{T},t),\color{black}\,\boldsymbol{X}(\cdot,t)\in [H^1(\mathbb{T})]^2$ and $\mu(\cdot,t)\in H^1(\mathbb{T})$ such that:
\begin{subequations}
\label{eqn:weak continuous torus}
\begin{align}
\label{eqn:weak1 continuous torus}
&\Bigl(\boldsymbol{n}\cdot\partial_{t}\boldsymbol{X},
\varphi\Bigr)_{\Gamma(t)}+\Bigl(\partial_{s}\mu,
\partial_s\varphi\Bigr)_{\Gamma(t)}=0,\qquad\forall \varphi\in H^1(\mathbb{T}),\\[0.5em]
\label{eqn:weak2 continuous torus}
&\Bigl(\mu,\boldsymbol{n}\cdot\boldsymbol{\omega}\Bigr)_{\Gamma(t)}-\Bigl( \boldsymbol{Z}_k(\boldsymbol{n})\partial_s \boldsymbol{X},\partial_s\boldsymbol{\omega}\Bigr)_{\Gamma(t)}= 0,\quad\forall\boldsymbol{\omega}\in [H^1(\mathbb{T})]^2.
\end{align}
\end{subequations}

\subsection{Area conservation and energy dissipation}
Let $A(t)$ denote the area (i.e., the  region $\Omega(t)$ enclosed by the curve $\Gamma(t)$) and $W_c(t)$ denote the free energy (or weighted length), which are defined as
\begin{equation}\label{AtWct}
A(t):=\int_{\Omega(t)}1\,d{\bf x}=\int_0^{L(t)}y(s,t)\partial_sx(s,t)\,ds,\ \ W_c(t):=\int_{\Gamma(t)}\gamma(\boldsymbol{n})\,ds, \ \ t\ge0.
\end{equation}
For the above variational problem
\eqref{eqn:weak continuous torus}, we have
\begin{proposition}[area conservation and energy dissipation] The area $A(t)$ of the solution $\Bigl(\boldsymbol{X}(\cdot,~t)$, $\mu(\cdot,~t)\Bigr)\in [H^1(\mathbb{T})]^2 \times  H^1(\mathbb{T})$ defined by the variational problem
\eqref{eqn:weak continuous torus} is conserved, and the energy $W_c(t)$ is dissipative, i.e.
\begin{equation}\label{mass and energy law weak form}
A(t)\equiv \int_0^{L_0}y_0(s)x_0^\prime(s)ds,
\quad W_c(t)\le W_c(t_1)\le \int_0^{L_0}\gamma(\boldsymbol{n})\,ds,\quad t\geq t_1 \ge0.
\end{equation}
\end{proposition}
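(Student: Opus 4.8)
The plan is to prove each assertion by differentiating the relevant geometric quantity in time and then inserting a well-chosen test function into the weak formulation \eqref{eqn:weak continuous torus}. For the area, I would first rewrite \eqref{AtWct} over the fixed reference domain as $A(t)=\int_{\mathbb{T}} y\,\partial_\rho x\,d\rho$, using $ds=\partial_\rho s\,d\rho$ and $\partial_s x=\partial_\rho x/\partial_\rho s$. Differentiating in time and integrating $\int_{\mathbb{T}} y\,\partial_\rho(\partial_t x)\,d\rho$ by parts over the closed (periodic) curve gives
\[
\frac{d}{dt}A(t)=\int_{\mathbb{T}}\big(\partial_\rho x\,\partial_t y-\partial_\rho y\,\partial_t x\big)\,d\rho .
\]
Since $\boldsymbol{n}=-\boldsymbol{\tau}^\perp=(-\partial_\rho y,\partial_\rho x)^T/\abs{\partial_\rho\boldsymbol{X}}$ and $\partial_\rho s=\abs{\partial_\rho\boldsymbol{X}}$ by \eqref{ntauform}, the integrand equals $(\boldsymbol{n}\cdot\partial_t\boldsymbol{X})\,\partial_\rho s$, whence $\frac{d}{dt}A(t)=\big(\boldsymbol{n}\cdot\partial_t\boldsymbol{X},1\big)_{\Gamma(t)}$. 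Taking $\varphi\equiv 1$ in \eqref{eqn:weak1 continuous torus} kills $\partial_s\varphi$, so this quantity vanishes and $A(t)$ stays equal to its initial value $\int_0^{L_0}y_0(s)x_0'(s)\,ds$.

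For the energy, the central step is the first-variation identity
\[
\frac{d}{dt}W_c(t)=\big(\boldsymbol{Z}_k(\boldsymbol{n})\partial_s\boldsymbol{X},\,\partial_s\partial_t\boldsymbol{X}\big)_{\Gamma(t)} .
\]
To establish it I would exploit the one-homogeneity of the extension $\gamma(\boldsymbol{p})$ in \eqref{gamma p}: because $\abs{\partial_\rho\boldsymbol{X}}\,\boldsymbol{n}=-(\partial_\rho\boldsymbol{X})^\perp$, we have $\gamma(\boldsymbol{n})\abs{\partial_\rho\boldsymbol{X}}=\gamma\big(-(\partial_\rho\boldsymbol{X})^\perp\big)$, so $W_c(t)=\int_{\mathbb{T}}\gamma\big(-(\partial_\rho\boldsymbol{X})^\perp\big)\,d\rho$. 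Differentiating and using that $\nabla\gamma$ is zero-homogeneous, so that $\nabla\gamma\big(-(\partial_\rho\boldsymbol{X})^\perp\big)=\nabla\gamma(\boldsymbol{n})=\boldsymbol{\xi}$ by \eqref{The xi-vector}, yields $\frac{d}{dt}W_c=\int_{\mathbb{T}}\boldsymbol{\xi}\cdot\big(-(\partial_\rho\partial_t\boldsymbol{X})^\perp\big)\,d\rho$. The antisymmetry $\boldsymbol{a}\cdot\boldsymbol{b}^\perp=-\boldsymbol{a}^\perp\cdot\boldsymbol{b}$ together with $\partial_\rho(\cdot)=\partial_\rho s\,\partial_s(\cdot)$ rewrites this as $\big(\boldsymbol{\xi}^\perp,\partial_s\partial_t\boldsymbol{X}\big)_{\Gamma(t)}$, and the Lemma's identity \eqref{reformulation of Zk, auxillary eq 2}, namely $\boldsymbol{Z}_k(\boldsymbol{n})\partial_s\boldsymbol{X}=\boldsymbol{\xi}^\perp$, finishes the identity. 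Note the stabilizing term $k(\boldsymbol{n})\boldsymbol{n}\boldsymbol{n}^T$ drops out since $\boldsymbol{n}\cdot\boldsymbol{\tau}=0$, so $k(\boldsymbol{n})$ plays no role at the continuous level.

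With this identity, I would pick $\boldsymbol{\omega}=\partial_t\boldsymbol{X}$ in \eqref{eqn:weak2 continuous torus}, giving $\big(\boldsymbol{Z}_k(\boldsymbol{n})\partial_s\boldsymbol{X},\partial_s\partial_t\boldsymbol{X}\big)_{\Gamma(t)}=\big(\mu,\boldsymbol{n}\cdot\partial_t\boldsymbol{X}\big)_{\Gamma(t)}$, and $\varphi=\mu$ in \eqref{eqn:weak1 continuous torus}, giving $\big(\boldsymbol{n}\cdot\partial_t\boldsymbol{X},\mu\big)_{\Gamma(t)}=-\big(\partial_s\mu,\partial_s\mu\big)_{\Gamma(t)}$. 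Chaining the three relations produces $\frac{d}{dt}W_c(t)=-\big(\partial_s\mu,\partial_s\mu\big)_{\Gamma(t)}\le 0$, so $W_c$ is nonincreasing; integrating in time yields $W_c(t)\le W_c(t_1)$ for $t\ge t_1\ge 0$, and the case $t_1=0$ with $W_c(0)=\int_0^{L_0}\gamma(\boldsymbol{n})\,ds$ delivers the last bound.

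The only nonroutine point is the first-variation identity for the anisotropic weighted length; the homogeneity rewriting $\gamma(\boldsymbol{n})\abs{\partial_\rho\boldsymbol{X}}=\gamma\big(-(\partial_\rho\boldsymbol{X})^\perp\big)$ is precisely what makes the differentiation clean and causes the $\boldsymbol{\xi}$-vector, hence $\boldsymbol{Z}_k(\boldsymbol{n})$, to appear automatically. Everything else, the area derivative and the two test-function substitutions, is standard $H^{-1}$-gradient-flow bookkeeping.
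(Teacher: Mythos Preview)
Your proof is correct and follows the same overall strategy as the paper: for the area you differentiate and test with $\varphi\equiv 1$, and for the energy you establish the first-variation identity $\dot W_c=(\boldsymbol{Z}_k(\boldsymbol{n})\partial_s\boldsymbol{X},\partial_s\partial_t\boldsymbol{X})_{\Gamma(t)}$ and then chain the substitutions $\boldsymbol{\omega}=\partial_t\boldsymbol{X}$ and $\varphi=\mu$.

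The one notable difference is in how the first-variation identity is obtained. The paper differentiates the product $\gamma(\boldsymbol{n})\,\partial_\rho s$ directly, computing $\partial_t(\partial_\rho s)=\boldsymbol{\tau}\cdot\partial_s\partial_t\boldsymbol{X}\,\partial_\rho s$ and $\partial_t\boldsymbol{n}=-(\boldsymbol{n}\cdot\partial_s\partial_t\boldsymbol{X})\boldsymbol{\tau}$ separately, and then recognises $\gamma(\boldsymbol{n})\boldsymbol{\tau}-(\boldsymbol{\xi}\cdot\boldsymbol{\tau})\boldsymbol{n}=\boldsymbol{\xi}^\perp$ via \eqref{xiperp23}. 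You instead exploit the one-homogeneity of the extension \eqref{gamma p} to rewrite the integrand as $\gamma\big(-(\partial_\rho\boldsymbol{X})^\perp\big)$, so that the chain rule delivers $\boldsymbol{\xi}$ in a single stroke without ever computing $\partial_t\boldsymbol{n}$. Your route is a bit slicker and makes it transparent why the Cahn--Hoffman vector appears; the paper's route is more hands-on but requires no appeal to the homogeneous extension beyond the definition of $\boldsymbol{\xi}$. Both land on $(\boldsymbol{\xi}^\perp,\partial_s\partial_t\boldsymbol{X})_{\Gamma(t)}$ and then invoke \eqref{reformulation of Zk, auxillary eq 2}.
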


\begin{proof} The proof of area conservation is
similar to the Proposition 2.1 in \cite{li2020energy}, thus we omit the details for brevity.

To prove the energy dissipation in \eqref{mass and energy law weak form},
taking the derivative of $W_c(t)$  with respect to $t$,
noting \eqref{The xi-vector}, \eqref{reformulation of Zk, auxillary eq 2},  \eqref{eqn:weak2 continuous torus} with $\boldsymbol{\omega}=\partial_t\boldsymbol{X}$, and
\eqref{eqn:weak1 continuous torus} with $\varphi=\mu$, and $\partial_t\boldsymbol{n}=(\boldsymbol{\tau}\cdot\partial_t \boldsymbol{n})\boldsymbol{\tau}
=-(\boldsymbol{n}\cdot\partial_s\partial_t\boldsymbol{X})
\boldsymbol{\tau}$, we have
\begin{eqnarray}\label{dGamma(t) continuous}
\dot{W}_c(t)&=&\frac{d}{dt}\int_0^{L(t)}\gamma(\boldsymbol{n})ds
  =\frac{d}{dt}\int_0^1\gamma(\boldsymbol{n})\partial_\rho s d\rho=\int_0^1(\gamma(\boldsymbol{n})\partial_t\partial_\rho s+\nabla \gamma(\boldsymbol{n})\cdot \partial_t\boldsymbol{n}\partial_\rho s)d\rho\nonumber\\
  &=&\int_0^1(\gamma(\boldsymbol{n})\boldsymbol{\tau}-(\boldsymbol{\xi}\cdot \boldsymbol{\tau})\boldsymbol{n})\cdot \partial_s\partial_t\boldsymbol{X}\partial_\rho s \,d\rho=\Bigl(\boldsymbol{Z}_k(\boldsymbol{n})\partial_s\boldsymbol{X},\partial_s\partial_t\boldsymbol{X}\Bigr)_{\Gamma(t)}\nonumber\\
  &=&-\Bigl(\partial_{s}\mu,
\partial_s\mu\Bigr)_{\Gamma(t)}\leq 0,  \nonumber
\end{eqnarray}
which implies the energy dissipation in
\eqref{mass and energy law weak form}.
\end{proof}

\section{PFEM discretizations and their properties}
In this section, we first discretize the variational problem
\eqref{eqn:weak continuous torus} in space by PFEM and show
its area conservation and energy dissipation.
Then we further discretize the semi-discretization in time
by a structure-preserving PFEM (SP-PFEM) which conserves area
in the fully-discrete level.

\subsection{A semi-discretization in space by PFEM and its properties}
Let $N>0$ be an integer, the mesh size $h=1/N$, the grid points $\rho_{j}=jh$ for $j=0,1,\ldots,N$, and the subintervals
$I_j=[\rho_{j-1},\rho_{j}]$ for $j=1,2,\ldots,N$. Then we can give
a uniform partition of the torus $\mathbb{T}$ by
$\mathbb{T}=[0,1]=\bigcup_{j=1}^{N}I_j$. Moreover, the finite element subspace of $H^1(\mathbb{T})$ is given by
 \begin{eqnarray*}\label{eqn:H1 semi torus}
\mathbb{K}^h=\mathbb{K}^h(\mathbb{T}):=\{u^h\in C(\mathbb{T})\ |\ u^h\mid_{I_{j}}\in \mathcal{P}_1,\ \forall j=1,2,\ldots,N\}, 
\end{eqnarray*}
where $\mathcal{P}_1$ stands for the space of polynomials of degree at most $1$.

Let the piecewise linear curve $\Gamma^h(t):=\color{black}\boldsymbol{X}^h(\mathbb{T},t),\,\color{black}\boldsymbol{X}^h(\cdot,t)=(x^h(\cdot,t),
y^h(\cdot,t))^T\in [\mathbb{K}^h]^2$ be the numerical approximation of $\Gamma(t):=\color{black}\boldsymbol{X}(\mathbb{T},t),\,\color{black}\boldsymbol{X}(\cdot,t)\in [H^1(\mathbb{T})]^2$ and the piecewise linear function
$\mu^h(\cdot,t)\in \mathbb{K}^h$ be the numerical approximation of $\mu(\cdot,t)\in H^1(\mathbb{T})$, where $(\vec X(\cdot,t), \mu(\cdot,t))\in [H^1(\mathbb{T})]^2\times H^1(\mathbb{T})$ is given by the variational problem
\eqref{eqn:weak continuous torus}. Then $\Gamma^h(t)$ is formed by ordered vectors $\{\boldsymbol{h}_j(t)\}_{j=1}^N$ and we assume that for $t\geq 0$, these vectors $\boldsymbol{h}_j(t)$ satisfy
\begin{eqnarray}\label{hjt987}
\quad h_{\rm min}(t):=\min_{1\le j\le N} |\boldsymbol{h}_j(t)|>0, \ \boldsymbol{h}_j(t):=\boldsymbol{X}^h(\rho_j,t)-\boldsymbol{X}^h(\rho_{j-1},t),\ \forall j,
\end{eqnarray}
where $|\boldsymbol{h}_j(t)|$ is the length of the vector $\boldsymbol{h}_j(t)$ for $j=1,2,\ldots,N$.

The outward unit normal vector $\boldsymbol{n}^h$, the unit tangential vector $\boldsymbol{\tau}^h$, and
the Cahn-Hoffman $\boldsymbol{\xi}$-vector $\boldsymbol{\xi}^h$
 of the curve $\Gamma^h(t)$ are constant vectors in the interior of each interval $I_j$ which can be computed by $\boldsymbol{h}_j(t)$ as
\begin{equation}
\boldsymbol{n}^h|_{I_j}=
-\frac{(\boldsymbol{h}_j)^\perp}{|\boldsymbol{h}_j|}:=\boldsymbol{n}^h_j,\quad \boldsymbol{\tau}^h|_{I_j}=\frac{\boldsymbol{h}_j}{|\boldsymbol{h}_j|}:
=\boldsymbol{\tau}^h_j,
\quad \boldsymbol{\xi}^h|_{I_j}=\boldsymbol{\xi}(\boldsymbol{n}^h_j):
=\boldsymbol{\xi}^h_j.
\label{eqn:semitannorm}
\end{equation}

Furthermore, for two scalar-/vector-valued functions $u$ and $v$ in $\mathbb{K}^h$ or $[\mathbb{K}^h]^2$ respectively, the mass lumped inner product $\big(\cdot,\cdot\big)_{\Gamma^h}^h$ over $\Gamma^h$ is defined as
\begin{equation}
\big(u,~v\big)_{\Gamma^h}^h:=\frac{1}{2}\sum_{j=1}^{N}|
\boldsymbol{h}_j|\,\Big[\big(u\cdot v\big)(\rho_j^-)+\big(u\cdot v\big)(\rho_{j-1}^+)\Big],
\label{eqn:semiproduct}
\end{equation}
where $u(\rho_j^\pm)=\lim\limits_{\rho\to \rho_j^\pm} u(\rho)$ for $0\le j\le N$.

Suppose $\Gamma^h(0):=\color{black}\boldsymbol{X}^h(\mathbb{T},0),\,\color{black}\boldsymbol{X}^h(\cdot,0)\in [\mathbb{K}^h]^2$ is the piecewise linear interpolation
of $\boldsymbol{X}_0(s)$ in \eqref{init}, where
$\boldsymbol{X}^h(\rho=\rho_j,0)=\boldsymbol{X}_0(s=s_j^0)$ with $s_j^0=L_0\rho_j$ for $j=0,1,\ldots,N$. Now we can state the following spatial semi-discretization  of the symmetrized variational formulation \eqref{eqn:weak continuous torus}: for a given initial curve $\Gamma^h(0):=\color{black}\boldsymbol{X}^h(\mathbb{T},0),\,\color{black}\boldsymbol{X}^h(\cdot,0)\in [\mathbb{K}^h]^2$, find the solution $\Gamma^h(t):=\color{black}\boldsymbol{X}^h(\mathbb{T},t),\,\color{black}\\\boldsymbol{X}^h(\cdot,t)=(x^h(\cdot,t),y^h(\cdot,t))^T\in
[\mathbb{K}^h]^2$ and $\mu^h(\cdot,t)\in \mathbb{K}^h$, such that
\begin{subequations}
\label{eqn:2dsemi torus}
\begin{align}
\label{eqn:2dsemi1 torus}
&\Bigl(\boldsymbol{n}^h\cdot\partial_{t}\boldsymbol{X}^h,
\varphi^h\Bigr)_{\Gamma^h}^h+\Bigl(\partial_{s}\mu^h,
\partial_s\varphi^h\Bigr)_{\Gamma^h}^h=0,\qquad\forall \varphi^h\in \mathbb{K}^h,\\
\label{eqn:2dsemi2 torus}
&\Bigl(\mu^h,\boldsymbol{n}^h\cdot\boldsymbol{\omega}^h\Bigr)_{\Gamma^h}^h-\Bigl( \boldsymbol{Z}_k(\boldsymbol{n}^h)\partial_s \boldsymbol{X}^h,\partial_s\boldsymbol{\omega}^h\Bigr)_{\Gamma^h}^h=0,
\quad\forall\boldsymbol{\omega}^h\in[\mathbb{K}^h]^2,
\end{align}
\end{subequations}
where
\begin{eqnarray*}
\boldsymbol{Z}_k(\boldsymbol{n}^h)&=&\gamma(\boldsymbol{n}^h)I_2-\boldsymbol{n}^h \boldsymbol{\xi}(\boldsymbol{n}^h)^T-\boldsymbol{\xi}(\boldsymbol{n}^h) (\boldsymbol{n}^h)^T+k(\boldsymbol{n}^h)\,\boldsymbol{n}^h(\boldsymbol{n}^h)^T\\
&=&\gamma(\boldsymbol{n}^h)I_2-\boldsymbol{n}^h (\boldsymbol{\xi}^h)^T-\boldsymbol{\xi}^h (\boldsymbol{n}^h)^T+k(\boldsymbol{n}^h)\,\boldsymbol{n}^h(\boldsymbol{n}^h)^T.
\end{eqnarray*}


Let $A^h(t)$ denote the area of the enclosed region of the piecewise linear closed curve $\Gamma^h(t)$, and $W^h_c(t)$ be its total
free energy, which are defined as
\begin{equation}\label{discrete area}
A^h(t)=\frac{1}{2}\sum_{j=1}^N[x_j^h(t)-x_{j-1}^h(t)][y_j^h(t)+y_{j-1}^h(t)],
\quad W^h_c(t)=\sum_{j=1}^N|\boldsymbol{h}_j(t)|\gamma(\boldsymbol{n}^h_j).
\end{equation}
\begin{remark}
Similar to the proof in \cite[Proposition 3.1]{li2020energy}, for the solution of the above semi-discretization \eqref{eqn:2dsemi torus},
we can easily prove the area conservation and energy dissipation during time evolution.
\end{remark}

\subsection{A structure-preserving PFEM}
Let $\tau>0$ be the time step size, and $t_m=m\tau$ be the discrete time levels for each $m\ge0$. Let $\Gamma^m\triangleq\Gamma^{h,m}=\color{black}\boldsymbol{X}^m(\mathbb{T}),\,\color{black}\boldsymbol{X}^m(\cdot)=
(x^m(\cdot),y^m(\cdot))^T\in [\mathbb{K}^h]^2$ is the numerical approximation of $\Gamma^h(t_m)=\color{black}\boldsymbol{X}^h(\mathbb{T},t_m),\,\color{black}\boldsymbol{X}^h(\cdot,t_m)\in [\mathbb{K}^h]^2$ and $\mu^m(\cdot)\in \mathbb{K}^h$ be the numerical  approximation of  $\mu^h(\cdot,t_m)\in \mathbb{K}^h$ for each $m\geq 0$, where $(\boldsymbol{X}^h(\cdot,t), \mu^h(\cdot,t))$ is
the solution of the semi-discretization \eqref{eqn:2dsemi torus}. Similarly, $\Gamma^{m}$ is formed by the ordered vectors
$\{\boldsymbol{h}^m_j\}_{j=1}^N$ defined by
\begin{equation}
\boldsymbol{h}^m_j
:=\boldsymbol{X}^m(\rho_j)-\boldsymbol{X}^m(\rho_{j-1}), \qquad  j=1,2,\ldots,N.
\end{equation}
Again, for each $m\geq 0$, the outward unit normal vector $\boldsymbol{n}^m$, the unit tangential vector $\boldsymbol{\tau}^m$, and the Cahn-Hoffman $\boldsymbol{\xi}$-vector $\boldsymbol{\xi}^m$ of the curve $\Gamma^{m}$ are constant vectors in the interior of each interval $I_j$ which can be computed as
\begin{equation}
\boldsymbol{n}^m|_{I_j}=-\frac{(\boldsymbol{h}_j^m)^\perp}
{|\boldsymbol{h}_j^m|}:=\boldsymbol{n}^m_j,\quad \boldsymbol{\tau}^m|_{I_j}=\frac{\boldsymbol{h}_j^m}
{|\boldsymbol{h}_j^m|}:=\boldsymbol{\tau}^m_j,
 \quad \boldsymbol{\xi}^m|_{I_j}=\boldsymbol{\xi}(\boldsymbol{n}^m_j)
:=\boldsymbol{\xi}_j^m.
\label{tangent and normal vector full case 1}
\end{equation}

Following the idea in~\cite{bao2021structurepreserving,jiang2021} to design
a SP-PFEM for surface diffusion, i.e., using the backward Euler method in time and the information of the curve at current time step and next time step to linearly interpolate the normal vector, a symmetrized SP-PFEM discretization of \eqref{eqn:2dsemi torus} is given as: for a given initial curve $\Gamma^{0}:=\color{black}\boldsymbol{X}^0(\mathbb{T})\color{black},\,\boldsymbol{X}^0(\cdot)\in [\mathbb{K}^h]^2$, for $m\geq 0$, find the curve $\Gamma^{m+1}:=\color{black}\boldsymbol{X}^{m+1}(\mathbb{T}),\,\color{black}\boldsymbol{X}^{m+1}(\cdot)
\in[\mathbb{K}^h]^2$ and  the chemical potential $\mu^{m+1}(\cdot)\in \mathbb{K}^h$, such that
\begin{subequations}
\label{eqn:aniso SP-PFEM}
\begin{align}
\label{eqn:aniso SP-PFEM eq1}
&\Bigl(\frac{\boldsymbol{X}^{m+1}-\boldsymbol{X}^m}{\tau}\cdot \boldsymbol{n}^{m+\frac{1}{2}},\varphi^h\Bigr)_{\Gamma^m}^h + \Bigl(\partial_s\mu^{m+1},\partial_s\varphi^h\Bigr)_{\Gamma^m}^h=0,\ \forall \varphi^h\in \mathbb{K}^h,\\
\label{eqn:aniso SP-PFEM eq2}
&\Bigl(\mu^{m+1},\boldsymbol{n}^{m+\frac{1}{2}}\cdot
\boldsymbol{\omega}^h\Bigr)_{\Gamma^m}^h-
\Bigl(\boldsymbol{Z}_k(\boldsymbol{n}^m)\partial_s \boldsymbol{X}^{m+1},\partial_s\boldsymbol{\omega}^h
\Bigr)_{\Gamma^m}^h=0,\ \forall\boldsymbol{\omega}^h
\in[\mathbb{K}^h]^2,
\end{align}
\end{subequations}
where $s$ is the arclength parameter of $\Gamma^{m}$, and $\boldsymbol{n}^{m+\frac{1}{2}}$ and $\boldsymbol{Z}_k(\boldsymbol{n}^m)$ are defined as
\begin{eqnarray}
\boldsymbol{n}^{m+\frac{1}{2}}&:=&-
  \frac{1}{2}\left(\partial_s\boldsymbol{X}^m+
  \partial_s\boldsymbol{X}^{m+1}\right)^{\perp}=
  -\frac{1}{2}\frac{1}{|\partial_\rho \boldsymbol{X}^m|}
  \left(\partial_\rho \boldsymbol{X}^m+\partial_\rho\boldsymbol{X}^{m+1}\right)^{\perp},\\
\boldsymbol{Z}_k(\boldsymbol{n}^m)&=&\gamma(\boldsymbol{n}^m)I_2-\boldsymbol{n}^m \boldsymbol{\xi}(\boldsymbol{n}^m)^T-\boldsymbol{\xi}(\boldsymbol{n}^m) (\boldsymbol{n}^m)^T+k(\boldsymbol{n}^m)\,\boldsymbol{n}^m(\boldsymbol{n}^m)^T
\nonumber\\
&=&\gamma(\boldsymbol{n}^m)I_2-\boldsymbol{n}^m (\boldsymbol{\xi}^m)^T-\boldsymbol{\xi}^m (\boldsymbol{n}^m)^T+k(\boldsymbol{n}^m)\,\boldsymbol{n}^m(\boldsymbol{n}^m)^T,
\end{eqnarray}
and for any scalar-/vector-valued function $f\in \mathbb{K}^h$ or $[\mathbb{K}^h]^2$ respectively, we compute its derivative with respect to the arclength parameter on $\Gamma^{m}$ as $\partial_sf = |\partial_{\rho}\vec X^m|^{-1}\partial_{\rho}f$.

The above scheme is ``weakly implicit'' with only one nonlinear term introduced in \eqref{eqn:aniso SP-PFEM eq1} and
\eqref{eqn:aniso SP-PFEM eq2}, respectively. In particular, the nonlinear term is a polynomial function of degree at most two with respect to the components of $\boldsymbol{X}^{m+1}$ and $\mu^{m+1}$.
Again, similar to \cite{bao2021structurepreserving} for surface diffusion,
the fully-implicit SP-PFEM \eqref{eqn:aniso SP-PFEM} can be efficiently and accurately solved by the Newton's iterative method in practical computations.

\begin{remark}The choice of $\boldsymbol{n}^{m+\frac{1}{2}}$ in \eqref{eqn:aniso SP-PFEM} plays an essential role in the proof of the area conservation, but it makes the numerical scheme fully-implicit, i.e. a nonlinear system has to be solved at each time step. By replacing $\boldsymbol{n}^{m+1/2}$ with $\boldsymbol{n}^m$, we can easily construct a semi-implicit PFEM, where only a linear system has to be solved at each time step. Similar to the fully-implicit SP-PFEM \eqref{eqn:aniso SP-PFEM}, the semi-implicit PFEM can also be proved to be unconditionally energy-stable if $\gamma(\boldsymbol{n})$ satisfies the condition \eqref{engstabgmp}. Of course, the semi-implicit PFEM does not conserve the area at the fully-discrete level.
\end{remark}

\subsection{Main results}
Let $A^m$ be the area of the interior region of the piecewise linear closed curve $\Gamma^{m}$, and $W^m_c$ ($m\ge0$) be its
energy, which are defined as
\begin{equation}\label{total area, SP-PFEM}
A^m:=\frac{1}{2}\sum_{j=1}^N\left(x_j^m-x_{j-1}^{m}\right)
\left(y_j^m+y_{j-1}^{m}\right),
\quad W_c^m:=W_c(\Gamma^m)=\sum_{j=1}^N|\boldsymbol{h}_j^m
|\gamma(\boldsymbol{n}_j^m).
\end{equation}

Denote
\begin{equation}\label{Fnnhat}
F(\boldsymbol{n},\hat{\boldsymbol{n}})=
\frac{\gamma(\hat{\boldsymbol{n}})^2
-\gamma(\boldsymbol{n})^2+2\gamma(\boldsymbol{n})(\boldsymbol{\xi}
\cdot\hat{\boldsymbol{n}}^{\perp})(\boldsymbol{n}\cdot
\hat{\boldsymbol{n}}^{\perp})}{\gamma(\boldsymbol{n})(\boldsymbol{n}
\cdot\hat{\boldsymbol{n}}^\perp)^2}, \qquad \forall \boldsymbol{n}\ne \pm\hat{\boldsymbol{n}}\in{\mathbb S}^1,
\end{equation}
and define the minimal stabilizing function $k_0(\boldsymbol{n}):\ {\mathbb S}^1\to {\mathbb R}^+$ as (the existence will be given in next section)
\begin{equation}\label{existence of k_0(n)}
k_0(\boldsymbol{n}):=\max_{\hat{\boldsymbol{n}}\in \mathbb{S}^1_{\boldsymbol{n}}} F(\boldsymbol{n},\hat{\boldsymbol{n}}),
\qquad \hbox{with} \quad \mathbb{S}^1_{\boldsymbol{n}}:=\left\{\hat{\boldsymbol{n}}\in \mathbb{S}^1\ |\ \hat{\boldsymbol{n}}\cdot\boldsymbol{n}\geq 0\right\}, \qquad \boldsymbol{n}\in {\mathbb S}^1.
\end{equation}
Then for the SP-PFEM \eqref{eqn:aniso SP-PFEM}, we have
\begin{theorem}[structure-preserving]
Assume $\gamma(\boldsymbol{n})$ satisfies \eqref{engstabgmp} and take
$k(\boldsymbol{n})$ in \eqref{def of the energy matrix Z_k(n)} satisfying $k(\boldsymbol{n})\geq k_0(\boldsymbol{n})$ for $\boldsymbol{n}\in {\mathbb S}^1$, then the SP-PFEM \eqref{eqn:aniso SP-PFEM} is area conservation and
energy dissipation, i.e.
\begin{equation}\label{area conservation, SP-PFEM}
  A^m\equiv A^0=\frac{1}{2}\sum_{j=1}^N\left(x_j^0-x_{j-1}^0\right)
  \left(y_j^0+y_{j-1}^0\right),\qquad m\geq 0.
\end{equation}
\begin{equation}\label{engdpfd2}
W^{m+1}_c\leq W^m_c\leq  \ldots \le W^0_c=\sum_{j=1}^N|\boldsymbol{h}_j^0|\,\gamma(\boldsymbol{n}_j^0),\qquad \forall m\ge0.
\end{equation}
\end{theorem}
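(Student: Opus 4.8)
The plan is to prove the two conclusions \eqref{area conservation, SP-PFEM} and \eqref{engdpfd2} separately, the area conservation being elementary and the energy dissipation carrying the real difficulty. For area conservation I would first isolate the purely geometric identity
\[ A^{m+1}-A^{m}=\Bigl((\boldsymbol{X}^{m+1}-\boldsymbol{X}^{m})\cdot\boldsymbol{n}^{m+\frac{1}{2}},\,1\Bigr)_{\Gamma^{m}}^{h}, \]
obtained by expanding the polygonal area formula \eqref{total area, SP-PFEM} at levels $m$ and $m+1$ and regrouping; the symmetric average $\boldsymbol{n}^{m+\frac{1}{2}}=-\tfrac{1}{2}(\partial_s\boldsymbol{X}^{m}+\partial_s\boldsymbol{X}^{m+1})^{\perp}$ is chosen precisely so that the quadratic-in-$\boldsymbol{X}^{m+1}$ contribution matches exactly. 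I would then take the constant test function $\varphi^{h}\equiv 1$ in \eqref{eqn:aniso SP-PFEM eq1}: since $\partial_s 1=0$ the stiffness term vanishes and the remaining term forces the right-hand side above to be zero, giving $A^{m+1}=A^{m}$ and hence \eqref{area conservation, SP-PFEM} by induction.

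For the energy dissipation the first step is a discrete energy identity: choosing $\varphi^{h}=\mu^{m+1}$ in \eqref{eqn:aniso SP-PFEM eq1} and $\boldsymbol{\omega}^{h}=\boldsymbol{X}^{m+1}-\boldsymbol{X}^{m}$ in \eqref{eqn:aniso SP-PFEM eq2}, and noting that the two $\boldsymbol{n}^{m+\frac{1}{2}}$-terms agree up to the factor $\tau$, subtraction yields
\[ \Bigl(\boldsymbol{Z}_k(\boldsymbol{n}^{m})\partial_s\boldsymbol{X}^{m+1},\partial_s(\boldsymbol{X}^{m+1}-\boldsymbol{X}^{m})\Bigr)_{\Gamma^{m}}^{h}=-\tau\Bigl(\partial_s\mu^{m+1},\partial_s\mu^{m+1}\Bigr)_{\Gamma^{m}}^{h}\le 0. \]
Thus it suffices to prove the minimizing-movement bound $W_c^{m+1}-W_c^{m}\le(\boldsymbol{Z}_k(\boldsymbol{n}^{m})\partial_s\boldsymbol{X}^{m+1},\partial_s(\boldsymbol{X}^{m+1}-\boldsymbol{X}^{m}))_{\Gamma^{m}}^{h}$. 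Because mass lumping and $\boldsymbol{Z}_k(\boldsymbol{n}^{m})$ are elementwise constant, this decouples into an edge-by-edge inequality: writing $\boldsymbol{e}=\boldsymbol{h}_j^{m}$, $\boldsymbol{f}=\boldsymbol{h}_j^{m+1}$, $\eta=|\boldsymbol{e}|$, with associated normals $\boldsymbol{n},\hat{\boldsymbol{n}}$ and tangents $\boldsymbol{\tau},\hat{\boldsymbol{\tau}}$, it is enough to show $|\boldsymbol{f}|\gamma(\hat{\boldsymbol{n}})-\eta\gamma(\boldsymbol{n})\le\tfrac{1}{\eta}\boldsymbol{f}^{T}\boldsymbol{Z}_k(\boldsymbol{n})(\boldsymbol{f}-\boldsymbol{e})$ for all admissible edges.

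This elementwise inequality is the crux, and I would prove it by a three-step chain resting on the identity $\boldsymbol{Z}_k(\boldsymbol{n})\boldsymbol{\tau}=\boldsymbol{\xi}^{\perp}$ from \eqref{reformulation of Zk, auxillary eq 2}, which gives $\boldsymbol{e}^{T}\boldsymbol{Z}_k(\boldsymbol{n})\boldsymbol{e}=\eta^{2}\gamma(\boldsymbol{n})$. The decisive step is the estimate $|\boldsymbol{f}|\gamma(\hat{\boldsymbol{n}})\le\sqrt{\gamma(\boldsymbol{n})\,\boldsymbol{f}^{T}\boldsymbol{Z}_k(\boldsymbol{n})\boldsymbol{f}}$; squaring and dividing by $|\boldsymbol{f}|^{2}$ recasts it as $\gamma(\boldsymbol{n})\,\hat{\boldsymbol{\tau}}^{T}\boldsymbol{Z}_k(\boldsymbol{n})\hat{\boldsymbol{\tau}}\ge\gamma(\hat{\boldsymbol{n}})^{2}$, and inserting the definition \eqref{def of the energy matrix Z_k(n)} of $\boldsymbol{Z}_k$ shows this to be equivalent exactly to $k(\boldsymbol{n})\ge F(\boldsymbol{n},\hat{\boldsymbol{n}})$ with $F$ as in \eqref{Fnnhat} — this is the reason $F$ is defined the way it is, and the hypothesis $k(\boldsymbol{n})\ge k_0(\boldsymbol{n})=\max_{\hat{\boldsymbol{n}}}F$ in \eqref{existence of k_0(n)} secures it for every direction $\hat{\boldsymbol{n}}$; applied to all directions it moreover makes $\boldsymbol{Z}_k(\boldsymbol{n})$ positive definite. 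The remaining two steps are soft: the arithmetic–geometric mean inequality $2\sqrt{\gamma(\boldsymbol{n})\,\boldsymbol{f}^{T}\boldsymbol{Z}_k\boldsymbol{f}}\le\tfrac{1}{\eta}\boldsymbol{f}^{T}\boldsymbol{Z}_k\boldsymbol{f}+\eta\gamma(\boldsymbol{n})$, and the Cauchy–Schwarz inequality $\boldsymbol{f}^{T}\boldsymbol{Z}_k\boldsymbol{e}\le\sqrt{\boldsymbol{f}^{T}\boldsymbol{Z}_k\boldsymbol{f}}\sqrt{\boldsymbol{e}^{T}\boldsymbol{Z}_k\boldsymbol{e}}$ for the now positive-definite form. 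Chaining these three bounds produces the elementwise inequality; summing over $j$ and inserting the energy identity yields $W_c^{m+1}\le W_c^{m}$, and \eqref{engdpfd2} follows by induction.

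The genuine obstacle is not this chain but the well-posedness of the threshold, namely that the maximum defining $k_0(\boldsymbol{n})$ in \eqref{existence of k_0(n)} is finite. The quotient $F(\boldsymbol{n},\hat{\boldsymbol{n}})$ has a denominator $\gamma(\boldsymbol{n})(\boldsymbol{n}\cdot\hat{\boldsymbol{n}}^{\perp})^{2}$ that degenerates as $\hat{\boldsymbol{n}}\to\boldsymbol{n}$, the only singular point of the closed half-circle $\mathbb{S}^1_{\boldsymbol{n}}$ (the antipode $\hat{\boldsymbol{n}}=-\boldsymbol{n}$ being excluded). I expect the main effort to be a careful second-order Taylor expansion of the numerator about $\hat{\boldsymbol{n}}=\boldsymbol{n}$: using $\gamma\in C^{2}(\mathbb{R}^{2}\setminus\{\boldsymbol{0}\})$ and the evenness $\gamma(-\boldsymbol{n})=\gamma(\boldsymbol{n})$ from \eqref{engstabgmp}, one shows the numerator vanishes to the same second order as the denominator, so that $F$ extends continuously to $\hat{\boldsymbol{n}}=\boldsymbol{n}$ with a finite limit (expressible through $\gamma(\boldsymbol{n})$, $\boldsymbol{\xi}\cdot\boldsymbol{\tau}$ and the Hessian eigenvalue $\lambda(\boldsymbol{n})$). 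Continuity on the compact set $\mathbb{S}^1_{\boldsymbol{n}}$ then furnishes the finite maximum $k_0(\boldsymbol{n})$. It is exactly here that condition \eqref{engstabgmp} enters — the evenness both confines the singularity to $\hat{\boldsymbol{n}}=\boldsymbol{n}$ and controls $\gamma(\hat{\boldsymbol{n}})^{2}-\gamma(\boldsymbol{n})^{2}$ — and I would expect this finiteness analysis (carried out in the next section) to be the technically most delicate ingredient of the proof.
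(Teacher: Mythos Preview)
Your plan is correct and matches the paper's argument closely: the area identity with $\varphi^{h}\equiv 1$, the test-function pair $\varphi^{h}=\mu^{m+1}$ and $\boldsymbol{\omega}^{h}=\boldsymbol{X}^{m+1}-\boldsymbol{X}^{m}$, the reduction to the edgewise bound, and the three-step chain (the quadratic lower bound $\gamma(\boldsymbol{n})\hat{\boldsymbol{\tau}}^{T}\boldsymbol{Z}_{k}(\boldsymbol{n})\hat{\boldsymbol{\tau}}\ge\gamma(\hat{\boldsymbol{n}})^{2}$, AM--GM, and Cauchy--Schwarz for the positive-definite form) are exactly what the paper does in Theorems~\ref{energy dissipation condition and k_0}--\ref{energy dissipation condition, theorem}; the paper just packages Cauchy--Schwarz as the polarization inequality $(\boldsymbol{Z}_{k}\boldsymbol{u},\boldsymbol{u}-\boldsymbol{v})\ge\tfrac12(\boldsymbol{Z}_{k}\boldsymbol{u},\boldsymbol{u})-\tfrac12(\boldsymbol{Z}_{k}\boldsymbol{v},\boldsymbol{v})$.

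One correction about where \eqref{engstabgmp} actually bites. The Taylor analysis of $F(\boldsymbol{n},\hat{\boldsymbol{n}})$ at $\hat{\boldsymbol{n}}=\boldsymbol{n}$ (the paper's Theorem~4.1) uses only $\gamma\in C^{2}(\mathbb{R}^{2}\setminus\{\boldsymbol{0}\})$; evenness is not needed there, and the half-circle $\mathbb{S}^{1}_{\boldsymbol{n}}$ already excludes the antipode regardless. The genuinely essential role of $\gamma(-\boldsymbol{n})=\gamma(\boldsymbol{n})$ is the one you gloss over when you write ``secures it for every direction $\hat{\boldsymbol{n}}$'': the maximum defining $k_{0}$ in \eqref{existence of k_0(n)} runs only over the half-circle, yet in the scheme $\boldsymbol{n}_{j}^{m+1}$ is unconstrained, so you must pass to all of $\mathbb{S}^{1}$. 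That step (the paper's \eqref{knk0n589}) is precisely $\gamma(\boldsymbol{n})(\hat{\boldsymbol{n}}^{\perp})^{T}\boldsymbol{Z}_{k}(\boldsymbol{n})\hat{\boldsymbol{n}}^{\perp}=\gamma(\boldsymbol{n})(-\hat{\boldsymbol{n}}^{\perp})^{T}\boldsymbol{Z}_{k}(\boldsymbol{n})(-\hat{\boldsymbol{n}}^{\perp})\ge\gamma(-\hat{\boldsymbol{n}})^{2}=\gamma(\hat{\boldsymbol{n}})^{2}$, and it fails without evenness --- indeed Corollary~4.1 shows evenness is forced by \eqref{energy dissipation condition on Z_k, ineq}. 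So move that hypothesis from the limit computation to the half-to-full-circle extension and the argument is complete.
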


The proof of area conservation \eqref{area conservation, SP-PFEM} is similar to the proof in \cite[Theorem 2.1]{bao2021structurepreserving} and it is omitted here for brevity, and we will establish the energy dissipation or
unconditional energy stability  \eqref{engdpfd2} in next section.

\section{Energy dissipation}
In this section, we first show,  under the condition \eqref{engstabgmp}
on $\gamma(\boldsymbol{n})$, the minimal stabilizing function $k_0(\boldsymbol{n})$ \eqref{existence of k_0(n)} is well defined, and then prove the energy dissipation of the SP-PFEM
\eqref{eqn:aniso SP-PFEM}.

\subsection{Choice of the stabilizing function} The function $F(\boldsymbol{n}, \hat{\boldsymbol{n}})$ is continuous for $\boldsymbol{n}\neq \pm \hat{\boldsymbol{n}}$. Thus to show the maximum in \eqref{existence of k_0(n)} is finite, it suffices to extent the definition of $F(\boldsymbol{n}, \hat{\boldsymbol{n}})$ to $\boldsymbol{n}=\pm \hat{\boldsymbol{n}}$.


\begin{theorem}[existence of limit]
For $\gamma(\boldsymbol{p})\in C^2(\mathbb{R}^2\setminus \{{\bf 0}\})$, we have
\begin{equation}\label{limFnnh}
\lim_{\substack{\hat{\boldsymbol{n}}\to\boldsymbol{n}\\ \hat{\boldsymbol{n}}\in {\mathbb S}^1}} F(\boldsymbol{n},\hat{\boldsymbol{n}})
=(\boldsymbol{n}^\perp)^T \bf{H}_\gamma(\boldsymbol{n})\boldsymbol{n}^\perp+
\frac{|\boldsymbol{\xi}|^2}
{\gamma(\boldsymbol{n})},\qquad  \forall \boldsymbol{n}\in{\mathbb S}^1.
\end{equation}
\end{theorem}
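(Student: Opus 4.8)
The plan is to reduce the one–sided limit on $\mathbb{S}^1$ to an elementary Taylor expansion in a single angular variable about $\boldsymbol{n}$. Writing $\hat{\boldsymbol{n}}=\hat{\boldsymbol{n}}(\phi):=\cos\phi\,\boldsymbol{n}+\sin\phi\,\boldsymbol{n}^\perp$, so that $\hat{\boldsymbol{n}}\to\boldsymbol{n}$ in $\mathbb{S}^1$ corresponds exactly to $\phi\to0$, I would first record the elementary geometric identities $\hat{\boldsymbol{n}}^\perp=\cos\phi\,\boldsymbol{n}^\perp-\sin\phi\,\boldsymbol{n}$, $\boldsymbol{n}\cdot\hat{\boldsymbol{n}}^\perp=-\sin\phi$, and $\boldsymbol{\xi}\cdot\hat{\boldsymbol{n}}^\perp=\cos\phi\,(\boldsymbol{\xi}\cdot\boldsymbol{\tau})-\sin\phi\,\gamma(\boldsymbol{n})$, using $(\boldsymbol{n}^\perp)^\perp=-\boldsymbol{n}$ together with $\boldsymbol{\xi}\cdot\boldsymbol{n}=\gamma(\boldsymbol{n})$ and $\boldsymbol{\tau}=\boldsymbol{n}^\perp$ from \eqref{The xi-vector}. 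This turns both the numerator and the denominator $\gamma(\boldsymbol{n})(\boldsymbol{n}\cdot\hat{\boldsymbol{n}}^\perp)^2=\gamma(\boldsymbol{n})\sin^2\phi$ of $F$ into explicit functions of the single variable $\phi$.

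Next I would Taylor–expand $g(\phi):=\gamma(\hat{\boldsymbol{n}}(\phi))$ to second order, using the homogeneous extension $\gamma(\boldsymbol{p})\in C^2(\mathbb{R}^2\setminus\{\boldsymbol{0}\})$ and the chain rule. Since $\hat{\boldsymbol{n}}'(0)=\boldsymbol{n}^\perp$ and $\hat{\boldsymbol{n}}''(0)=-\boldsymbol{n}$, the relevant values are $g(0)=\gamma(\boldsymbol{n})$, $g'(0)=\boldsymbol{\xi}\cdot\boldsymbol{n}^\perp=\boldsymbol{\xi}\cdot\boldsymbol{\tau}$, and $g''(0)=(\boldsymbol{n}^\perp)^T\mathbf{H}_\gamma(\boldsymbol{n})\boldsymbol{n}^\perp-\gamma(\boldsymbol{n})$, where the last term comes from $\nabla\gamma(\boldsymbol{n})\cdot\hat{\boldsymbol{n}}''(0)=-\boldsymbol{\xi}\cdot\boldsymbol{n}=-\gamma(\boldsymbol{n})$. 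Squaring then gives $\gamma(\hat{\boldsymbol{n}})^2=\gamma(\boldsymbol{n})^2+2\gamma(\boldsymbol{n})(\boldsymbol{\xi}\cdot\boldsymbol{\tau})\phi+\big((\boldsymbol{\xi}\cdot\boldsymbol{\tau})^2+\gamma(\boldsymbol{n})g''(0)\big)\phi^2+o(\phi^2)$.

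The decisive step is to insert these expansions into the numerator $\mathcal{N}(\phi)=\gamma(\hat{\boldsymbol{n}})^2-\gamma(\boldsymbol{n})^2+2\gamma(\boldsymbol{n})(\boldsymbol{\xi}\cdot\hat{\boldsymbol{n}}^\perp)(\boldsymbol{n}\cdot\hat{\boldsymbol{n}}^\perp)$ and verify that the $O(\phi)$ contributions cancel. Using $\sin\phi\cos\phi=\phi+O(\phi^3)$ and $\sin^2\phi=\phi^2+O(\phi^4)$, the cross term expands as $2\gamma(\boldsymbol{n})(\boldsymbol{\xi}\cdot\hat{\boldsymbol{n}}^\perp)(\boldsymbol{n}\cdot\hat{\boldsymbol{n}}^\perp)=-2\gamma(\boldsymbol{n})(\boldsymbol{\xi}\cdot\boldsymbol{\tau})\phi+2\gamma(\boldsymbol{n})^2\phi^2+O(\phi^3)$, whose linear term exactly annihilates the $+2\gamma(\boldsymbol{n})(\boldsymbol{\xi}\cdot\boldsymbol{\tau})\phi$ coming from $\gamma(\hat{\boldsymbol{n}})^2$. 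Hence $\mathcal{N}(\phi)=\big((\boldsymbol{\xi}\cdot\boldsymbol{\tau})^2+\gamma(\boldsymbol{n})g''(0)+2\gamma(\boldsymbol{n})^2\big)\phi^2+o(\phi^2)$, and dividing by $\gamma(\boldsymbol{n})\phi^2+o(\phi^2)$ yields $\big((\boldsymbol{\xi}\cdot\boldsymbol{\tau})^2+\gamma(\boldsymbol{n})g''(0)+2\gamma(\boldsymbol{n})^2\big)/\gamma(\boldsymbol{n})$. Substituting $g''(0)$ and simplifying with $|\boldsymbol{\xi}|^2=(\boldsymbol{\xi}\cdot\boldsymbol{n})^2+(\boldsymbol{\xi}\cdot\boldsymbol{\tau})^2=\gamma(\boldsymbol{n})^2+(\boldsymbol{\xi}\cdot\boldsymbol{\tau})^2$ collapses the result to $(\boldsymbol{n}^\perp)^T\mathbf{H}_\gamma(\boldsymbol{n})\boldsymbol{n}^\perp+|\boldsymbol{\xi}|^2/\gamma(\boldsymbol{n})$, as claimed. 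The main obstacle I anticipate is precisely this $O(\phi)$ cancellation: the numerator is naively of order $\phi$ while the denominator is of order $\phi^2$, so a finite limit can emerge only if the linear terms vanish, and confirming this requires carrying the first–order Taylor data consistently through both the $\gamma(\hat{\boldsymbol{n}})^2$ term and the cross term. I also rely on the $C^2$ hypothesis in \eqref{engstabgmp} to legitimize the second–order expansion with an $o(\phi^2)$ remainder, which is what guarantees that the limit exists and is independent of the sign of $\phi\to0$.
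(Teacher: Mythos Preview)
Your argument is correct. You take a somewhat different route from the paper, though both are ultimately Taylor-expansion arguments. The paper first algebraically rewrites $F(\boldsymbol{n},\hat{\boldsymbol{n}})$ using the decompositions $\gamma(\boldsymbol{n})=(\boldsymbol{\xi}\cdot\hat{\boldsymbol{n}}^\perp)(\boldsymbol{n}\cdot\hat{\boldsymbol{n}}^\perp)+(\boldsymbol{\xi}\cdot\hat{\boldsymbol{n}})(\boldsymbol{n}\cdot\hat{\boldsymbol{n}})$ and $(\boldsymbol{n}\cdot\hat{\boldsymbol{n}}^\perp)^2=|\boldsymbol{n}-\hat{\boldsymbol{n}}|^2(1-|\boldsymbol{n}-\hat{\boldsymbol{n}}|^2/4)$, so that the leading piece becomes a standard second-order Taylor remainder of $\gamma(\boldsymbol{p})^2$ in the ambient plane; the Hessian of $\gamma^2$ then supplies $\gamma(\boldsymbol{n})\mathbf{H}_\gamma(\boldsymbol{n})+\boldsymbol{\xi}\boldsymbol{\xi}^T$, and the one-sided limits of $(\hat{\boldsymbol{n}}-\boldsymbol{n})/|\hat{\boldsymbol{n}}-\boldsymbol{n}|\to\pm\boldsymbol{n}^\perp$ are handled separately. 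You instead parametrize $\hat{\boldsymbol{n}}$ by the angle $\phi$ from the outset and expand everything---both $\gamma(\hat{\boldsymbol{n}}(\phi))$ and the trigonometric factors---directly in $\phi$, which makes the crucial $O(\phi)$ cancellation explicit and yields the two-sided limit automatically. Your route is more elementary and self-contained; the paper's route is a bit more coordinate-free and isolates cleanly the Hessian of $\gamma^2$, which is conceptually pleasant. Either way the final simplification $|\boldsymbol{\xi}|^2=\gamma(\boldsymbol{n})^2+(\boldsymbol{\xi}\cdot\boldsymbol{\tau})^2$ closes the computation.
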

\begin{proof}
Plugging the vector decomposition
$\gamma(\boldsymbol{n})=\boldsymbol{\xi}\cdot \boldsymbol{n}=(\boldsymbol{\xi} \cdot \hat{\boldsymbol{n}}^\perp)(\boldsymbol{n}\cdot \hat{\boldsymbol{n}}^\perp)+(\boldsymbol{\xi}\cdot \hat{\boldsymbol{n}})(\boldsymbol{n}\cdot\hat{\boldsymbol{n}})$
and $1=\boldsymbol{n}\cdot \boldsymbol{n}=(\boldsymbol{n} \cdot \hat{\boldsymbol{n}}^\perp)^2+(\boldsymbol{n}\cdot \hat{\boldsymbol{n}})^2$ into \eqref{Fnnhat}, we get
\begin{align}\label{Fnnhlim67}
F(\boldsymbol{n},\hat{\boldsymbol{n}})&=\frac{\gamma(\hat{\boldsymbol{n}})^2
-\gamma(\boldsymbol{n})^2+2\gamma(\boldsymbol{n})^2-
2\gamma(\boldsymbol{n})(\boldsymbol{\xi}\cdot\hat{\boldsymbol{n}})
(\boldsymbol{n}\cdot\hat{\boldsymbol{n}})}{\gamma(\boldsymbol{n})|
\boldsymbol{n}-\hat{\boldsymbol{n}}|^2(1-|\boldsymbol{n}-
\hat{\boldsymbol{n}}|^2/4)}\nonumber\\
&=\frac{\gamma(\hat{\boldsymbol{n}})^2+\gamma(\boldsymbol{n})^2-
2\gamma(\boldsymbol{n})(\boldsymbol{\xi}\cdot\hat{\boldsymbol{n}})
(1-|\boldsymbol{n}-\hat{\boldsymbol{n}}|^2/2)}{\gamma(\boldsymbol{n})
|\boldsymbol{n}-\hat{\boldsymbol{n}}|^2(1-|\boldsymbol{n}-
\hat{\boldsymbol{n}}|^2/4)}\nonumber\\
&=\frac{1}{1-|\boldsymbol{n}-\hat{\boldsymbol{n}}|^2/4}
\left[\frac{\gamma(\hat{\boldsymbol{n}})^2-\gamma(\boldsymbol{n})^2
-2\gamma(\boldsymbol{n})(\boldsymbol{\xi}\cdot(\hat{\boldsymbol{n}}
-\boldsymbol{n}))}
{\gamma(\boldsymbol{n})|\boldsymbol{n}-\hat{\boldsymbol{n}}|^2}
+\boldsymbol{\xi}\cdot
\hat{\boldsymbol{n}}\right].
\end{align}
Here we use the following equality
\[\boldsymbol{n}\cdot \hat{\boldsymbol{n}}=
\frac{|\boldsymbol{n}|^2+
|\hat{\boldsymbol{n}}|^2-|\boldsymbol{n}-
\hat{\boldsymbol{n}}|^2}{2}=1-\frac{|\boldsymbol{n}-
\hat{\boldsymbol{n}}|^2}{2}.\]\color{black}
Under the condition $\gamma(\boldsymbol{p})\in C^2(\mathbb{R}^2\setminus \{{\bf 0}\})$, using Taylor expansion and noting $\nabla\gamma(\boldsymbol{p})^2=2\gamma(\boldsymbol{p})
\nabla\gamma(\boldsymbol{p})$ and $\boldsymbol{\xi}=\nabla \gamma(\boldsymbol{p})|_{\boldsymbol{p}=\boldsymbol{n}}$,  we obtain
\begin{equation*}
\gamma(\boldsymbol{p})^2-\gamma(\boldsymbol{n})^2-
2\gamma(\boldsymbol{n})\boldsymbol{\xi}\cdot
(\boldsymbol{p}-\boldsymbol{n})=(\boldsymbol{p}-\boldsymbol{n})^T
\left[\gamma(\boldsymbol{n}){\bf H}_\gamma(\boldsymbol{n})+
\boldsymbol{\xi}\boldsymbol{\xi}^T\right](\boldsymbol{p}-\boldsymbol{n})
+o(|\boldsymbol{p}-\boldsymbol{n}|^2).
\end{equation*}
For any $\boldsymbol{n}\in{\mathbb S}^1$, noting that
\[ \lim\limits_{\substack{\boldsymbol{p}\to\boldsymbol{n}^+\\ \boldsymbol{p}\in \mathbb{S}^1}}\frac{\boldsymbol{p}-\boldsymbol{n}}
{|\boldsymbol{p}-\boldsymbol{n}|}=\boldsymbol{n}^\perp,\quad
\lim\limits_{\substack{\boldsymbol{p}\to\boldsymbol{n}^-\\ \boldsymbol{p}\in \mathbb{S}^1}}\frac{\boldsymbol{p}-\boldsymbol{n}}
{|\boldsymbol{p}-\boldsymbol{n}|}=-\boldsymbol{n}^\perp,
\]
where $\boldsymbol{p}\to \boldsymbol{n}^+/\boldsymbol{n}^-$ means $\boldsymbol{p}\cdot \boldsymbol{n}^\perp\geq 0 /\leq 0$, respectively. We then get
\begin{align}
 \lim\limits_{\substack{\boldsymbol{p}\to\boldsymbol{n}^+\\ \boldsymbol{p}\in \mathbb{S}^1}}\frac{(\boldsymbol{p}-\boldsymbol{n})^T
\left[\gamma(\boldsymbol{n}){\bf H}_\gamma(\boldsymbol{n})+
\boldsymbol{\xi}\boldsymbol{\xi}^T\right](\boldsymbol{p}-\boldsymbol{n})}
{|\boldsymbol{p}-\boldsymbol{n}|^2}&=(\boldsymbol{n}^\perp)^T\left[\gamma(\boldsymbol{n}){\bf H}_\gamma(\boldsymbol{n})+
\boldsymbol{\xi}\boldsymbol{\xi}^T\right]\boldsymbol{n}^\perp,\nonumber\\
\lim\limits_{\substack{\boldsymbol{p}\to\boldsymbol{n}^-\\ \boldsymbol{p}\in \mathbb{S}^1}}\frac{(\boldsymbol{p}-\boldsymbol{n})^T
\left[\gamma(\boldsymbol{n}){\bf H}_\gamma(\boldsymbol{n})+
\boldsymbol{\xi}\boldsymbol{\xi}^T\right](\boldsymbol{p}-\boldsymbol{n})}
{|\boldsymbol{p}-\boldsymbol{n}|^2}&=-(\boldsymbol{n}^\perp)^T\left[\gamma(\boldsymbol{n}){\bf H}_\gamma(\boldsymbol{n})+
\boldsymbol{\xi}\boldsymbol{\xi}^T\right](-\boldsymbol{n}^\perp)\nonumber\\
&=(\boldsymbol{n}^\perp)^T\left[\gamma(\boldsymbol{n}){\bf H}_\gamma(\boldsymbol{n})+
\boldsymbol{\xi}\boldsymbol{\xi}^T\right]\boldsymbol{n}^\perp,\nonumber
\end{align}
thus we have
\begin{align}\label{Fplimit}
\lim_{\substack{\boldsymbol{p}\to\boldsymbol{n}\\ \boldsymbol{p}\in \mathbb{S}^1}}\frac{\gamma(\boldsymbol{p})^2-
\gamma(\boldsymbol{n})^2-2\gamma(\boldsymbol{n})
\boldsymbol{\xi}\cdot(\boldsymbol{p}-\boldsymbol{n})}
{|\boldsymbol{p}-\boldsymbol{n}|^2}&=(\boldsymbol{n}^\perp)^T\left[\gamma(\boldsymbol{n}){\bf H}_\gamma(\boldsymbol{n})+
\boldsymbol{\xi}\boldsymbol{\xi}^T\right]\boldsymbol{n}^\perp\nonumber\\
&=\gamma(\boldsymbol{n})\,
(\boldsymbol{n}^\perp)^T \bf{H}_\gamma(\boldsymbol{n})\boldsymbol{n}^\perp+
(\boldsymbol{\xi}\cdot\boldsymbol{n}^\perp)^2.
\end{align}
Combining \eqref{Fnnhlim67} and \eqref{Fplimit}, noting
\eqref{The xi-vector} to get $\gamma(\boldsymbol{n})=
\boldsymbol{\xi}\cdot
\boldsymbol{n}$, we obtain
\begin{eqnarray}\label{limFnnhh}
\lim\limits_{\substack{\hat{\boldsymbol{n}}\to\boldsymbol{n}\\ \hat{\boldsymbol{n}}\in {\mathbb S}^1}}
F(\boldsymbol{n},\hat{\boldsymbol{n}})&=&\frac{1}{\gamma(\boldsymbol{n})}
\lim_{\substack{\boldsymbol{p}\to\boldsymbol{n}\\\boldsymbol{p}\in \mathbb{S}^1}}\frac{\gamma(\boldsymbol{p})^2-
\gamma(\boldsymbol{n})^2-2\gamma(\boldsymbol{n})
\boldsymbol{\xi}\cdot(\boldsymbol{p}-\boldsymbol{n})}
{|\boldsymbol{p}-\boldsymbol{n}|^2}
+\boldsymbol{\xi}\cdot
\boldsymbol{n}\nonumber\\
&=&(\boldsymbol{n}^\perp)^T \bf{H}_\gamma(\boldsymbol{n})\boldsymbol{n}^\perp+
\frac{(\boldsymbol{\xi}\cdot\boldsymbol{n}^\perp)^2}{\gamma(\boldsymbol{n})}
+\boldsymbol{\xi}\cdot
\boldsymbol{n}\nonumber\\
&=&(\boldsymbol{n}^\perp)^T \bf{H}_\gamma(\boldsymbol{n})\boldsymbol{n}^\perp+
\frac{|\boldsymbol{\xi}|^2}{\gamma(\boldsymbol{n})}.
\end{eqnarray}
The proof is completed.
\end{proof}

Under the condition \eqref{engstabgmp}, for any $\boldsymbol{n}\in {\mathbb S}^1$, it is easy to see that $F(\boldsymbol{n},\hat{\boldsymbol{n}})$
is a continuous function for $\hat{\boldsymbol{n}}\in {\mathbb S}^1$
with $\hat{\boldsymbol{n}}\ne -\boldsymbol{n}$. Furthermore, if $\gamma(\boldsymbol{n})=\gamma(-\boldsymbol{n})$, then we know $F(\boldsymbol{n},\hat{\boldsymbol{n}})\in C^1(\mathbb{S}^1\times \mathbb{S}^1)$. This, together with
the above Theorem, suggests us to define the following

\begin{theorem}[existence of stabilizing function]
\label{energy dissipation condition and k_0}
Under the condition \eqref{engstabgmp} on $\gamma(\boldsymbol{n})$ and assume $k(\boldsymbol{n})\geq k_0(\boldsymbol{n})$ for $\boldsymbol{n}\in{\mathbb S}^1$ in \eqref{def of the energy matrix Z_k(n)},
we have
\begin{equation}\label{energy dissipation condition on Z_k, ineq}
  \gamma(\boldsymbol{n})[(\hat{\boldsymbol{n}}^{\perp})^T \boldsymbol{Z}_k(\boldsymbol{n})\hat{\boldsymbol{n}}^{\perp}]\geq \gamma(\hat{\boldsymbol{n}})^2,\qquad \forall \boldsymbol{n},\, \hat{\boldsymbol{n}}\in \mathbb{S}^1.
\end{equation}
In addition, we have an alternative definition of $k_0(\boldsymbol{n})$ in
\eqref{existence of k_0(n)} as
\begin{equation}\label{alternative definition of k_0}
k_0(\boldsymbol{n})=\inf\left\{k(\boldsymbol{n})\ |\  \gamma(\boldsymbol{n})[(\hat{\boldsymbol{n}}^{\perp})^T \boldsymbol{Z}_k(\boldsymbol{n})\hat{\boldsymbol{n}}^{\perp}]\geq \gamma(\hat{\boldsymbol{n}})^2,\quad \forall  \hat{\boldsymbol{n}}\in \mathbb{S}^1\right\}, \quad \boldsymbol{n}\in \mathbb{S}^1.
\end{equation}
\end{theorem}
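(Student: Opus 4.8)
The plan is to reduce the quadratic inequality \eqref{energy dissipation condition on Z_k, ineq} to a single pointwise comparison between the scalar $k(\boldsymbol{n})$ and the function $F(\boldsymbol{n},\hat{\boldsymbol{n}})$ of \eqref{Fnnhat}. First I would expand the quadratic form directly from the definition \eqref{def of the energy matrix Z_k(n)}: since $\boldsymbol{Z}_k(\boldsymbol{n})$ is symmetric, for any unit vector $\boldsymbol{v}$ one has
\[
\boldsymbol{v}^{T}\boldsymbol{Z}_k(\boldsymbol{n})\boldsymbol{v}
=\gamma(\boldsymbol{n})-2(\boldsymbol{n}\cdot\boldsymbol{v})(\boldsymbol{\xi}\cdot\boldsymbol{v})+k(\boldsymbol{n})(\boldsymbol{n}\cdot\boldsymbol{v})^{2},\qquad \boldsymbol{v}\in\mathbb{S}^1.
\]
Taking $\boldsymbol{v}=\hat{\boldsymbol{n}}^{\perp}$, multiplying by $\gamma(\boldsymbol{n})>0$, and comparing the result with the numerator of \eqref{Fnnhat}, I expect the clean identity
\[
\gamma(\boldsymbol{n})\,[(\hat{\boldsymbol{n}}^{\perp})^{T}\boldsymbol{Z}_k(\boldsymbol{n})\hat{\boldsymbol{n}}^{\perp}]-\gamma(\hat{\boldsymbol{n}})^{2}
=\gamma(\boldsymbol{n})(\boldsymbol{n}\cdot\hat{\boldsymbol{n}}^{\perp})^{2}\,[k(\boldsymbol{n})-F(\boldsymbol{n},\hat{\boldsymbol{n}})],
\]
so that \eqref{energy dissipation condition on Z_k, ineq} is \emph{equivalent} to the right-hand side being nonnegative.

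Next I would split into two cases according to whether the prefactor $\boldsymbol{n}\cdot\hat{\boldsymbol{n}}^{\perp}$ vanishes. When $\hat{\boldsymbol{n}}=\pm\boldsymbol{n}$ we have $\boldsymbol{n}\cdot\hat{\boldsymbol{n}}^{\perp}=0$, so both sides of the identity vanish and \eqref{energy dissipation condition on Z_k, ineq} holds with equality; this is consistent because $\hat{\boldsymbol{n}}^{\perp}=\pm\boldsymbol{\tau}$ kills the off-diagonal and stabilizing terms, leaving $\gamma(\boldsymbol{n})^2=\gamma(\pm\boldsymbol{n})^2$ via the parity $\gamma(-\boldsymbol{n})=\gamma(\boldsymbol{n})$ in \eqref{engstabgmp}. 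Crucially, this degenerate direction imposes no constraint on $k(\boldsymbol{n})$. When $\hat{\boldsymbol{n}}\ne\pm\boldsymbol{n}$ the prefactor $\gamma(\boldsymbol{n})(\boldsymbol{n}\cdot\hat{\boldsymbol{n}}^{\perp})^{2}$ is strictly positive, and the inequality is equivalent to $k(\boldsymbol{n})\ge F(\boldsymbol{n},\hat{\boldsymbol{n}})$.

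It then remains to upgrade this pointwise-in-$\hat{\boldsymbol{n}}$ condition to the uniform bound carried by $k_0$, and to confirm that $k_0(\boldsymbol{n})$ is finite. Here I would first record the parity $F(\boldsymbol{n},-\hat{\boldsymbol{n}})=F(\boldsymbol{n},\hat{\boldsymbol{n}})$, which follows from $(-\hat{\boldsymbol{n}})^{\perp}=-\hat{\boldsymbol{n}}^{\perp}$ together with $\gamma(-\hat{\boldsymbol{n}})=\gamma(\hat{\boldsymbol{n}})$; this allows the supremum over all $\hat{\boldsymbol{n}}\in\mathbb{S}^1\setminus\{\pm\boldsymbol{n}\}$ to be taken instead over the closed half-circle $\mathbb{S}^1_{\boldsymbol{n}}$. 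The only singularity of $F(\boldsymbol{n},\cdot)$ lying in $\mathbb{S}^1_{\boldsymbol{n}}$ is at $\hat{\boldsymbol{n}}=\boldsymbol{n}$ (the other singularity $-\boldsymbol{n}$ is excluded since $(-\boldsymbol{n})\cdot\boldsymbol{n}<0$), and the Theorem on existence of limit \eqref{limFnnh} supplies the continuous extension there. Thus the extended $F(\boldsymbol{n},\cdot)$ is continuous on the compact set $\mathbb{S}^1_{\boldsymbol{n}}$ and attains a finite maximum, which is exactly $k_0(\boldsymbol{n})$ in \eqref{existence of k_0(n)}; consequently $k(\boldsymbol{n})\ge k_0(\boldsymbol{n})\ge F(\boldsymbol{n},\hat{\boldsymbol{n}})$ for every admissible $\hat{\boldsymbol{n}}$, which proves \eqref{energy dissipation condition on Z_k, ineq}.

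Finally, for the alternative characterization \eqref{alternative definition of k_0}, the case analysis shows that the admissible set $\{k(\boldsymbol{n})\mid \eqref{energy dissipation condition on Z_k, ineq}\ \text{holds for all}\ \hat{\boldsymbol{n}}\}$ coincides with $\{k\mid k\ge F(\boldsymbol{n},\hat{\boldsymbol{n}})\ \forall\,\hat{\boldsymbol{n}}\ne\pm\boldsymbol{n}\}$, whose infimum is the supremum of $F$, equal by parity and continuity to $k_0(\boldsymbol{n})$. I expect the algebraic identity to be routine; the genuine obstacle is the interchange of the pointwise condition with the uniform bound, i.e. guaranteeing that the removable singularity $\hat{\boldsymbol{n}}=\boldsymbol{n}$ does not inflate the supremum and that the maximum is finite. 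This is resolved entirely by the continuous extension from the preceding limit theorem, after which the remainder of the argument is purely algebraic.
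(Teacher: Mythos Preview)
Your proposal is correct and follows essentially the same route as the paper: both arguments expand the quadratic form $(\hat{\boldsymbol{n}}^{\perp})^{T}\boldsymbol{Z}_k(\boldsymbol{n})\hat{\boldsymbol{n}}^{\perp}$ from \eqref{def of the energy matrix Z_k(n)}, reduce \eqref{energy dissipation condition on Z_k, ineq} to the comparison $k(\boldsymbol{n})\ge F(\boldsymbol{n},\hat{\boldsymbol{n}})$, use the parity $\gamma(-\boldsymbol{n})=\gamma(\boldsymbol{n})$ to restrict attention to the half-circle $\mathbb{S}^1_{\boldsymbol{n}}$, and invoke the limit \eqref{limFnnh} to ensure finiteness of the maximum. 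Your packaging via the explicit identity $\gamma(\boldsymbol{n})[(\hat{\boldsymbol{n}}^{\perp})^{T}\boldsymbol{Z}_k(\boldsymbol{n})\hat{\boldsymbol{n}}^{\perp}]-\gamma(\hat{\boldsymbol{n}})^{2}=\gamma(\boldsymbol{n})(\boldsymbol{n}\cdot\hat{\boldsymbol{n}}^{\perp})^{2}[k(\boldsymbol{n})-F(\boldsymbol{n},\hat{\boldsymbol{n}})]$ is slightly cleaner than the paper's chain-of-inequalities presentation, but the mathematical content is the same.
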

\begin{proof}
Assume $k(\boldsymbol{n})\geq k_0(\boldsymbol{n})$ for $\boldsymbol{n}\in{\mathbb S}^1$. For any $\boldsymbol{n}\in \mathbb{S}^1$, when $\hat{\boldsymbol{n}}\in \mathbb{S}^1_{\boldsymbol{n}}$, i.e. $\hat{\boldsymbol{n}}\cdot\boldsymbol{n}\geq 0$, plugging
 \eqref{def of the energy matrix Z_k(n)} into the left hand of
 \eqref{energy dissipation condition on Z_k, ineq}, noting \eqref{Fnnhat}
 and \eqref{existence of k_0(n)},  we have
\begin{align}\label{knk0n567}
\gamma(\boldsymbol{n})[(\hat{\boldsymbol{n}}^{\perp})^T \boldsymbol{Z}_k(\boldsymbol{n})\hat{\boldsymbol{n}}^{\perp}]
&=\gamma(\boldsymbol{n})^2-2\gamma(\boldsymbol{n})(\boldsymbol{\xi}\cdot \hat{\boldsymbol{n}}^\perp)(\boldsymbol{n}\cdot\hat{\boldsymbol{n}}^\perp)
+\gamma(\boldsymbol{n})k(\boldsymbol{n})(\boldsymbol{n}\cdot \hat{\boldsymbol{n}}^\perp)^2\nonumber\\
&\ge\gamma(\boldsymbol{n})^2-2\gamma(\boldsymbol{n})(\boldsymbol{\xi}\cdot \hat{\boldsymbol{n}}^\perp)(\boldsymbol{n}\cdot\hat{\boldsymbol{n}}^\perp)
+\gamma(\boldsymbol{n})k_0(\boldsymbol{n})(\boldsymbol{n}\cdot \hat{\boldsymbol{n}}^\perp)^2\nonumber\\
&\ge\gamma(\boldsymbol{n})^2-2\gamma(\boldsymbol{n})(\boldsymbol{\xi}\cdot \hat{\boldsymbol{n}}^\perp)(\boldsymbol{n}\cdot\hat{\boldsymbol{n}}^\perp)
+\gamma(\boldsymbol{n})F(\boldsymbol{n},\hat{\boldsymbol{n}})(\boldsymbol{n}\cdot \hat{\boldsymbol{n}}^\perp)^2\nonumber\\
&=\gamma(\hat{\boldsymbol{n}})^2.
\end{align}
On the other hand, when $\hat{\boldsymbol{n}}\cdot\boldsymbol{n}< 0$,
then $-\hat{\boldsymbol{n}}\cdot\boldsymbol{n}> 0$, from
\eqref{knk0n567} by replacing $\hat{\boldsymbol{n}}$ by $-\hat{\boldsymbol{n}}$ and noting $\gamma(-\hat{\boldsymbol{n}})=\gamma(\hat{\boldsymbol{n}})$, we have
\begin{equation}\label{knk0n589}
\gamma(\boldsymbol{n})[(\hat{\boldsymbol{n}}^{\perp})^T \boldsymbol{Z}_k(\boldsymbol{n})\hat{\boldsymbol{n}}^{\perp}]=
\gamma(\boldsymbol{n})[(-\hat{\boldsymbol{n}}^{\perp})^T \boldsymbol{Z}_k(\boldsymbol{n})(-\hat{\boldsymbol{n}}^{\perp})]\ge
\gamma(-\hat{\boldsymbol{n}})^2=\gamma(\hat{\boldsymbol{n}})^2.
\end{equation}
Combining \eqref{knk0n567} and \eqref{knk0n589}, we get
\eqref{energy dissipation condition on Z_k, ineq} immediately.

From the above proof, it is easy to see that
\[\gamma(\boldsymbol{n})[(\hat{\boldsymbol{n}}^{\perp})^T \boldsymbol{Z}_{k_0}(\boldsymbol{n})\hat{\boldsymbol{n}}^{\perp}]\geq \gamma(\hat{\boldsymbol{n}})^2,\qquad \forall \boldsymbol{n},\, \hat{\boldsymbol{n}}\in \mathbb{S}^1,\]
which implies
\begin{equation}\label{k0gret}
k_0(\boldsymbol{n})\geq \inf\left\{k(\boldsymbol{n})\ |\  \gamma(\boldsymbol{n})[(\hat{\boldsymbol{n}}^{\perp})^T \boldsymbol{Z}_k(\boldsymbol{n})\hat{\boldsymbol{n}}^{\perp}]\geq \gamma(\hat{\boldsymbol{n}})^2,\quad \forall \hat{\boldsymbol{n}}\in \mathbb{S}^1\right\},\quad \forall \boldsymbol{n}\in \mathbb{S}^1.
\end{equation}
On the other hand, suppose $\boldsymbol{Z}_k(\boldsymbol{n})$ satisfies \eqref{energy dissipation condition on Z_k, ineq}, then we have
\begin{equation}\label{eq: temp1 in energy stable on Z_k}
  \gamma(\boldsymbol{n})\left(\gamma(\boldsymbol{n})-2(\boldsymbol{\xi}\cdot \hat{\boldsymbol{n}}^\perp)(\boldsymbol{n}\cdot\hat{
  \boldsymbol{n}}^\perp)+k(\boldsymbol{n})(\boldsymbol{n}\cdot \hat{\boldsymbol{n}}^\perp)^2\right)\geq \gamma(\hat{\boldsymbol{n}})^2,\qquad \forall \hat{\boldsymbol{n}}\in \mathbb{S}^1_{\boldsymbol{n}},
\end{equation}
which implies
\begin{equation}
k(\boldsymbol{n})\geq \frac{\gamma(\hat{\boldsymbol{n}})^2-\gamma(\boldsymbol{n})^2
+2\gamma(\boldsymbol{n})(\boldsymbol{\xi}\cdot\hat{
\boldsymbol{n}}^{\perp})(\boldsymbol{n}\cdot\hat{
\boldsymbol{n}}^{\perp})}{\gamma(\boldsymbol{n})
(\boldsymbol{n}\cdot\hat{\boldsymbol{n}}^\perp)^2}=
F(\boldsymbol{n},\hat{\boldsymbol{n}}),\quad \forall \hat{\boldsymbol{n}}\in \mathbb{S}^1_{\boldsymbol{n}}.
\end{equation}
By condition \eqref{engstabgmp}, this inequality holds for all $\hat{\boldsymbol{n}}\in \mathbb{S}^1$. Thus we get $k(\boldsymbol{n})\geq k_0(\boldsymbol{n})$, which implies
\begin{equation}\label{k0less}
k_0(\boldsymbol{n})\leq  \inf\left\{k(\boldsymbol{n})\ |\  \gamma(\boldsymbol{n})[(\hat{\boldsymbol{n}}^{\perp})^T \boldsymbol{Z}_k(\boldsymbol{n})\hat{\boldsymbol{n}}^{\perp}]\geq \gamma(\hat{\boldsymbol{n}})^2,\quad \forall \hat{\boldsymbol{n}}\in \mathbb{S}^1\right\},\quad \forall \boldsymbol{n}\in \mathbb{S}^1.
\end{equation}
Combining \eqref{k0gret} and \eqref{k0less}, we obtain
\eqref{alternative definition of k_0} immediately.
\end{proof}

\begin{remark}\label{rmk 4.1}
Assume $\boldsymbol{n}=(-\sin\theta,\cos\theta)^T$ ($\theta\in[-\pi,\pi]$) and $\hat{\boldsymbol{n}}=(-\sin\hat{\theta},\cos\hat{\theta})^T$, then the problem to find the minimal stabilizing function $k_0(\boldsymbol{n})$ defined in \eqref{existence of k_0(n)} can be reformulated as an optimization problem in term of the single variable $\hat{\theta}$, i.e.,
\begin{equation}\label{opti}
\tilde{k}_0(\theta):=	k_0(\boldsymbol{n})=k_0(-\sin\theta,\cos\theta)=
\max_{\hat{\theta}\in [\theta-\frac{\pi}{2},\theta+\frac{\pi}{2}] }
\tilde{F}^\theta(\hat{\theta}), \qquad -\pi\le \theta\le \pi,
\end{equation}
where
\begin{equation}\label{tildeF}
\tilde{F}^\theta(\hat{\theta}):=F(\boldsymbol{n},\hat{\boldsymbol{n}})
=\frac{\hat{\gamma}(\hat{\theta})^2-\hat{\gamma}(\theta)^2-2\hat{\gamma}(\theta)\hat{\gamma}'(\theta)\cos(\hat{\theta}-\theta)\sin(\hat{\theta}-\theta)}{\hat{\gamma}(\theta)\sin^2(\hat{\theta}-\theta)}+ 2\hat{\gamma}(\theta),
\end{equation}
with $\hat{\gamma}(\theta):=\gamma(\boldsymbol{n})
=\gamma(-\sin\theta,\cos\theta)$ and
$\hat{\gamma}(\hat{\theta}):=\gamma(\hat{\boldsymbol{n}})=
\gamma(-\sin\hat{\theta},\cos\hat{\theta})$ by noting
$\boldsymbol{\xi}=\boldsymbol{\xi}(\boldsymbol{n})
=\hat{\gamma}(\theta)\boldsymbol{n}-\hat{\gamma}'(\theta)\boldsymbol{n}^\perp$.
Thus for a given $\boldsymbol{n}$ (or $\theta$), we can obtain $k_0(\boldsymbol{n})$ (or $\tilde{k}_0(\theta)$) by numerically solving the above single-variable optimization problem \eqref{opti}.
\end{remark}

\begin{corollary}[positivity of the minimal stabilizing function] Assume \eqref{energy dissipation condition on Z_k, ineq}
is satisfied, then $\boldsymbol{Z}_k(\boldsymbol{n})$ is a symmetric positive definite matrix and
\begin{equation}\label{gmnmn45}
\gamma(-\boldsymbol{n})=\gamma(\boldsymbol{n}), \qquad
k_0(\boldsymbol{n})>0, \qquad \forall \boldsymbol{n}\in \mathbb{S}^1.
\end{equation}
\end{corollary}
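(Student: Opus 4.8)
The plan is to extract all three conclusions directly from the energy-dissipation inequality \eqref{energy dissipation condition on Z_k, ineq} by testing it against two carefully chosen unit vectors $\hat{\boldsymbol{n}}$, combined with the elementary identities already recorded in the excerpt and the positivity $\gamma>0$ on $\mathbb{S}^1$. The symmetry of $\boldsymbol{Z}_k(\boldsymbol{n})$ is immediate from its definition \eqref{def of the energy matrix Z_k(n)}, since $\gamma(\boldsymbol{n})I_2$, $\boldsymbol{n}\boldsymbol{\xi}^T+\boldsymbol{\xi}\boldsymbol{n}^T$ and $\boldsymbol{n}\boldsymbol{n}^T$ are each symmetric. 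For positive definiteness I would observe that every nonzero $\boldsymbol{v}\in\mathbb{R}^2$ can be written as $\boldsymbol{v}=|\boldsymbol{v}|\,\hat{\boldsymbol{n}}^\perp$ for a unique $\hat{\boldsymbol{n}}\in\mathbb{S}^1$ (because $^\perp$ is a rotation, hence a bijection of $\mathbb{S}^1$). Then \eqref{energy dissipation condition on Z_k, ineq} gives $\boldsymbol{v}^T\boldsymbol{Z}_k(\boldsymbol{n})\boldsymbol{v}=|\boldsymbol{v}|^2(\hat{\boldsymbol{n}}^\perp)^T\boldsymbol{Z}_k(\boldsymbol{n})\hat{\boldsymbol{n}}^\perp\ge |\boldsymbol{v}|^2\gamma(\hat{\boldsymbol{n}})^2/\gamma(\boldsymbol{n})>0$, so $\boldsymbol{Z}_k(\boldsymbol{n})$ is positive definite.

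Next, to obtain $\gamma(-\boldsymbol{n})=\gamma(\boldsymbol{n})$, the key preliminary computation is $(\boldsymbol{n}^\perp)^T\boldsymbol{Z}_k(\boldsymbol{n})\boldsymbol{n}^\perp=\gamma(\boldsymbol{n})$. This follows from \eqref{reformulation of Zk, auxillary eq 2}, which says $\boldsymbol{Z}_k(\boldsymbol{n})\boldsymbol{n}^\perp=\boldsymbol{\xi}^\perp$, together with \eqref{xiperp23} and $\boldsymbol{n}\cdot\boldsymbol{n}^\perp=0$; note that the $k(\boldsymbol{n})$-term drops out since $\boldsymbol{n}^T\boldsymbol{n}^\perp=0$, so the quantity is independent of $k$. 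Testing \eqref{energy dissipation condition on Z_k, ineq} with $\hat{\boldsymbol{n}}=-\boldsymbol{n}$, for which $\hat{\boldsymbol{n}}^\perp=-\boldsymbol{n}^\perp$, then yields $\gamma(\boldsymbol{n})^2\ge\gamma(-\boldsymbol{n})^2$, i.e. $\gamma(\boldsymbol{n})\ge\gamma(-\boldsymbol{n})$. Replacing $\boldsymbol{n}$ by $-\boldsymbol{n}$ gives the reverse inequality, and the two combine to the desired equality.

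Finally, for $k_0(\boldsymbol{n})>0$ I would exhibit a single admissible direction on which $F$ is strictly positive. The natural choice is $\hat{\boldsymbol{n}}=\boldsymbol{n}^\perp\in\mathbb{S}^1_{\boldsymbol{n}}$, which is admissible since $\boldsymbol{n}^\perp\cdot\boldsymbol{n}=0\ge 0$ and at which $F$ is well defined because $\boldsymbol{n}^\perp\ne\pm\boldsymbol{n}$. Using $(\boldsymbol{n}^\perp)^\perp=-\boldsymbol{n}$ and $\boldsymbol{\xi}\cdot\boldsymbol{n}=\gamma(\boldsymbol{n})$ in \eqref{Fnnhat}, I expect $F(\boldsymbol{n},\boldsymbol{n}^\perp)=\bigl(\gamma(\boldsymbol{n})^2+\gamma(\boldsymbol{n}^\perp)^2\bigr)/\gamma(\boldsymbol{n})>0$, whence by the definition \eqref{existence of k_0(n)} one gets $k_0(\boldsymbol{n})=\max_{\hat{\boldsymbol{n}}\in\mathbb{S}^1_{\boldsymbol{n}}}F(\boldsymbol{n},\hat{\boldsymbol{n}})\ge F(\boldsymbol{n},\boldsymbol{n}^\perp)>0$.

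No step presents a genuine obstacle; the entire content lies in spotting that the two extreme test directions $\hat{\boldsymbol{n}}=-\boldsymbol{n}$ and $\hat{\boldsymbol{n}}=\boldsymbol{n}^\perp$ do all the work, the former forcing the surface energy to be even and the latter providing a strictly positive lower bound for $k_0(\boldsymbol{n})$. The only care needed is to track the identity $(\boldsymbol{n}^\perp)^T\boldsymbol{Z}_k(\boldsymbol{n})\boldsymbol{n}^\perp=\gamma(\boldsymbol{n})$ and to invoke $\gamma>0$ wherever a division by $\gamma(\boldsymbol{n})$ occurs.
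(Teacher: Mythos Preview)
Your proof is correct. The treatment of symmetry, positive definiteness, and the evenness $\gamma(-\boldsymbol{n})=\gamma(\boldsymbol{n})$ matches the paper's argument essentially verbatim (the paper also tests \eqref{energy dissipation condition on Z_k, ineq} at $\hat{\boldsymbol{n}}=-\boldsymbol{n}$ and swaps $\boldsymbol{n}\leftrightarrow-\boldsymbol{n}$).

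The one genuine difference is in how you obtain $k_0(\boldsymbol{n})>0$. The paper uses a trace identity: from \eqref{def of the energy matrix Z_k(n)} together with $\boldsymbol{\xi}\cdot\boldsymbol{n}=\gamma(\boldsymbol{n})$ and $|\boldsymbol{n}|=1$ one finds ${\rm Tr}\bigl(\boldsymbol{Z}_k(\boldsymbol{n})\bigr)=k(\boldsymbol{n})$, so in particular $k_0(\boldsymbol{n})={\rm Tr}\bigl(\boldsymbol{Z}_{k_0}(\boldsymbol{n})\bigr)>0$ because $\boldsymbol{Z}_{k_0}(\boldsymbol{n})$ is positive definite. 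Your route instead evaluates $F$ directly at $\hat{\boldsymbol{n}}=\boldsymbol{n}^\perp$ and reads off $k_0(\boldsymbol{n})\ge F(\boldsymbol{n},\boldsymbol{n}^\perp)=\bigl(\gamma(\boldsymbol{n})^2+\gamma(\boldsymbol{n}^\perp)^2\bigr)/\gamma(\boldsymbol{n})>0$. Your argument is more elementary and even produces an explicit quantitative lower bound; the paper's trace observation is a tidy structural fact about $\boldsymbol{Z}_k$ that is worth knowing in its own right.
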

\begin{proof}
Taking $\hat{\boldsymbol{n}}=-\boldsymbol{n}$ in
\eqref{energy dissipation condition on Z_k, ineq}, noting
the first equality in \eqref{knk0n567}, we get $\gamma(\boldsymbol{n})^2\ge \gamma(-\boldsymbol{n})^2$ which suggests $\gamma(-\boldsymbol{n})^2\ge \gamma(-(-\boldsymbol{n}))^2=\gamma(\boldsymbol{n})^2$, and thus we
obtain the first equality in \eqref{gmnmn45} since $\gamma(\boldsymbol{n})>0$. From
\eqref{energy dissipation condition on Z_k, ineq}, we get $\boldsymbol{Z}_{k}(\boldsymbol{n})$ is symmetric positive definite,
which implies $k(\boldsymbol{n})={\rm Tr}(\boldsymbol{Z}_{k}(\boldsymbol{n}))\ge
k_0(\boldsymbol{n})={\rm Tr}(\boldsymbol{Z}_{k_0}(\boldsymbol{n}))>0$ for
$\boldsymbol{n}\in \mathbb{S}^1$.
\end{proof}

If we consider from the anisotropic surface energy $\gamma(\boldsymbol{n})$
to its corresponding minimal stabilizing function $k_0(\boldsymbol{n})$ defined in \eqref{alternative definition of k_0} (or
\eqref{existence of k_0(n)}) as a mapping, then it is a sub-linear mapping,
i.e., positively homogeneous and subadditive.

\begin{lemma}[positive homogeneity and subadditivity]\label{lemma: sub-linear}
Assume $k_0(\boldsymbol{n})$, $k_1(\boldsymbol{n})$ and $k_2(\boldsymbol{n})$
be the minimal stabilizing functions for the anisotropic surface energies $\gamma(\boldsymbol{n})$, $\gamma_1(\boldsymbol{n})$ and $\gamma_2(\boldsymbol{n})$, respectively, then we have

(i) if $\gamma_1(\boldsymbol{n})=c\,\gamma(\boldsymbol{n})$ with $c>0$,
then $k_1(\boldsymbol{n})=c\,k_0(\boldsymbol{n})$ for $\boldsymbol{n}\in \mathbb{S}^1$, and

(ii) if $\gamma(\boldsymbol{n})=\gamma_1(\boldsymbol{n})+
\gamma_2(\boldsymbol{n})$, then $k_0(\boldsymbol{n})\le k_1(\boldsymbol{n})+
k_2(\boldsymbol{n})$ for $\boldsymbol{n}\in \mathbb{S}^1$.
\end{lemma}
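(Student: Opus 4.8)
The plan is to treat the two claims separately, exploiting two structural facts: that the Cahn-Hoffman vector depends linearly on $\gamma$, and that $k_0$ admits the infimum characterization \eqref{alternative definition of k_0}. Throughout I would use that the homogeneous extension \eqref{gamma p} is linear in $\gamma$, so $\boldsymbol{\xi}=\nabla\gamma(\boldsymbol{p})|_{\boldsymbol{p}=\boldsymbol{n}}$ inherits this linearity: under $\gamma_1=c\,\gamma$ one has $\boldsymbol{\xi}_1=c\,\boldsymbol{\xi}$, and under $\gamma=\gamma_1+\gamma_2$ one has $\boldsymbol{\xi}=\boldsymbol{\xi}_1+\boldsymbol{\xi}_2$.

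For (i) I would substitute $\gamma_1=c\,\gamma$ and $\boldsymbol{\xi}_1=c\,\boldsymbol{\xi}$ directly into the defining formula \eqref{Fnnhat}. Every term in the numerator of $F_1(\boldsymbol{n},\hat{\boldsymbol{n}})$ acquires a factor $c^2$ while the denominator acquires a factor $c$, so $F_1(\boldsymbol{n},\hat{\boldsymbol{n}})=c\,F(\boldsymbol{n},\hat{\boldsymbol{n}})$ for every admissible $\hat{\boldsymbol{n}}$. Since the maximization domain $\mathbb{S}^1_{\boldsymbol{n}}$ in \eqref{existence of k_0(n)} depends only on $\boldsymbol{n}$ and not on the energy, taking the maximum over $\hat{\boldsymbol{n}}\in\mathbb{S}^1_{\boldsymbol{n}}$ and using $c>0$ yields $k_1(\boldsymbol{n})=c\,k_0(\boldsymbol{n})$ at once.

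For (ii) the key observation is that the surface energy matrix \eqref{def of the energy matrix Z_k(n)} is additive: choosing $k=k_1+k_2$, the linearity of $\gamma$ and $\boldsymbol{\xi}$ together with the trivial additivity of the $k\,\boldsymbol{n}\boldsymbol{n}^T$ term give $\boldsymbol{Z}_k(\boldsymbol{n})=\boldsymbol{Z}^{(1)}_{k_1}(\boldsymbol{n})+\boldsymbol{Z}^{(2)}_{k_2}(\boldsymbol{n})$, where the superscript indicates the matrix built from $\gamma_i$. I would then test the quadratic form against $\hat{\boldsymbol{n}}^\perp$ and set $u_i:=(\hat{\boldsymbol{n}}^\perp)^T\boldsymbol{Z}^{(i)}_{k_i}(\boldsymbol{n})\hat{\boldsymbol{n}}^\perp$, so that the target quantity splits as $(\gamma_1+\gamma_2)(u_1+u_2)=\bigl(\gamma_1 u_1+\gamma_2 u_2\bigr)+\bigl(\gamma_1 u_2+\gamma_2 u_1\bigr)$, with all $\gamma_i$ evaluated at $\boldsymbol{n}$. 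Applying Theorem~\ref{energy dissipation condition and k_0} to each $\gamma_i$ with its own minimal $k_i$, inequality \eqref{energy dissipation condition on Z_k, ineq} gives $\gamma_i(\boldsymbol{n})\,u_i\ge\gamma_i(\hat{\boldsymbol{n}})^2$, i.e. $u_i\ge\gamma_i(\hat{\boldsymbol{n}})^2/\gamma_i(\boldsymbol{n})$. The diagonal part is then bounded below by $\gamma_1(\hat{\boldsymbol{n}})^2+\gamma_2(\hat{\boldsymbol{n}})^2$, and for the cross part I would again use $u_i\ge\gamma_i(\hat{\boldsymbol{n}})^2/\gamma_i(\boldsymbol{n})$ followed by the AM-GM inequality, obtaining $\gamma_1 u_2+\gamma_2 u_1\ge\tfrac{\gamma_1(\boldsymbol{n})}{\gamma_2(\boldsymbol{n})}\gamma_2(\hat{\boldsymbol{n}})^2+\tfrac{\gamma_2(\boldsymbol{n})}{\gamma_1(\boldsymbol{n})}\gamma_1(\hat{\boldsymbol{n}})^2\ge 2\,\gamma_1(\hat{\boldsymbol{n}})\gamma_2(\hat{\boldsymbol{n}})$. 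Summing reproduces $\bigl(\gamma_1(\hat{\boldsymbol{n}})+\gamma_2(\hat{\boldsymbol{n}})\bigr)^2=\gamma(\hat{\boldsymbol{n}})^2$, showing that $k=k_1+k_2$ satisfies the defining inequality for $\gamma$; the infimum characterization \eqref{alternative definition of k_0} then delivers $k_0(\boldsymbol{n})\le k_1(\boldsymbol{n})+k_2(\boldsymbol{n})$.

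The main obstacle is the cross term $\gamma_1 u_2+\gamma_2 u_1$ in (ii): the two energies' matrices genuinely interact, and a crude bound would lose exactly the quantity needed to complete the square. The AM-GM step is what makes the two mixed contributions combine into precisely the missing $2\,\gamma_1(\hat{\boldsymbol{n}})\gamma_2(\hat{\boldsymbol{n}})$, and the strict positivity of $\gamma_1,\gamma_2$ (ensured by \eqref{engstabgmp}) is essential for it to apply. A secondary point to verify is that the additive decomposition of $\boldsymbol{Z}_k$ and the positivity of each $k_i$ legitimately allow \eqref{energy dissipation condition on Z_k, ineq} to be invoked for $\gamma_1$ and $\gamma_2$ simultaneously at the common $\boldsymbol{n}$.
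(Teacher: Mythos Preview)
Your proposal is correct and follows essentially the same approach as the paper's proof. For (i) the argument is identical; for (ii) both proofs decompose $\boldsymbol{Z}_{k_1+k_2}=\boldsymbol{Z}^{(1)}_{k_1}+\boldsymbol{Z}^{(2)}_{k_2}$ and then bound $(\gamma_1+\gamma_2)(u_1+u_2)$ from below by $(\gamma_1(\hat{\boldsymbol{n}})+\gamma_2(\hat{\boldsymbol{n}}))^2$, the only cosmetic difference being that the paper packages this as a single Cauchy--Schwarz step $(\gamma_1+\gamma_2)(u_1+u_2)\ge(\sqrt{\gamma_1 u_1}+\sqrt{\gamma_2 u_2})^2$ while you expand and handle the cross term via AM--GM---which is precisely the computation underlying that Cauchy--Schwarz inequality.
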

\begin{proof}
From \eqref{The xi-vector}, we get
\begin{equation}\label{tildexi1}
\boldsymbol{\xi}=\nabla \gamma(\boldsymbol{p})\big|_{\boldsymbol{p}=\boldsymbol{n}},\qquad
\boldsymbol{\xi}_1=\nabla \gamma_1(\boldsymbol{p})\big|_{\boldsymbol{p}=\boldsymbol{n}},\qquad
\boldsymbol{\xi}_2=\nabla \gamma_2(\boldsymbol{p})\big|_{\boldsymbol{p}=\boldsymbol{n}}.
\end{equation}

(i) If $\gamma_1(\boldsymbol{n})=c\,\gamma(\boldsymbol{n})$, we get
$\boldsymbol{\xi}_1=c\,\boldsymbol{\xi}$. This, together with \eqref{Fnnhat}, implies
\begin{equation}\label{tildeFnn}
F_1(\boldsymbol{n},\hat{\boldsymbol{n}})=
\frac{\gamma_1(\hat{\boldsymbol{n}})^2
-\gamma_1(\boldsymbol{n})^2+2\gamma_1(\boldsymbol{n})(
\boldsymbol{\xi}_1
\cdot\hat{\boldsymbol{n}}^{\perp})(\boldsymbol{n}\cdot
\hat{\boldsymbol{n}}^{\perp})}{\gamma_1(\boldsymbol{n})(\boldsymbol{n}
\cdot\hat{\boldsymbol{n}}^\perp)^2}
=c\,F(\boldsymbol{n},\hat{\boldsymbol{n}}).
\end{equation}
Combining \eqref{tildeFnn} and \eqref{existence of k_0(n)}, we obtain
the positive homogeneity immediately.

(ii) If $\gamma(\boldsymbol{n})=\gamma_1(\boldsymbol{n})+
\gamma_2(\boldsymbol{n})$, then $\boldsymbol{\xi}=\boldsymbol{\xi}_1+\boldsymbol{\xi}_2$, thus
we have
\begin{align*}
\boldsymbol{Z}_{k_{1}+k_{2}}(\boldsymbol{n})&=\gamma(\boldsymbol{n})I_2
-\boldsymbol{\xi}
 \boldsymbol{n}^T-\boldsymbol{n}\boldsymbol{\xi}^T+
(k_{1}(\boldsymbol{n})+k_{2}(\boldsymbol{n}))
\boldsymbol{n}\boldsymbol{n}^T\\
&=\boldsymbol{Z}_{k_{1}}^{(1)}(\boldsymbol{n})+
\boldsymbol{Z}_{k_{2}}^{(2)}(\boldsymbol{n}),
\end{align*}
where
\begin{align*}
&\boldsymbol{Z}_{k_{1}}^{(1)}(\boldsymbol{n})=\gamma_1(\boldsymbol{n})I_2-
\boldsymbol{\xi}_1\boldsymbol{n}^T-\boldsymbol{n}(\boldsymbol{\xi}_1)^T
+k_{1}(\boldsymbol{n})
  \boldsymbol{n}\boldsymbol{n}^T,\\
&\boldsymbol{Z}_{k_{2}}^{(2)}(\boldsymbol{n})=\gamma_2(\boldsymbol{n})I_2-
\boldsymbol{\xi}_2\boldsymbol{n}^T-\boldsymbol{n}(\boldsymbol{\xi}_2)^T
+k_{2}(\boldsymbol{n})
  \boldsymbol{n}\boldsymbol{n}^T.
\end{align*}
By using Cauchy inequality, we get
\begin{align}\label{gam1221b}
&\gamma(\boldsymbol{n})[(\hat{\boldsymbol{n}}^{\perp})^T \boldsymbol{Z}_{k_{1}+k_{2}}(\boldsymbol{n})\hat{\boldsymbol{n}}^{\perp}]
\nonumber\\
&\geq\left(\sqrt{\gamma_1(\boldsymbol{n})[(\hat{\boldsymbol{n}}^{\perp}
)^T\boldsymbol{Z}_{k_{1}}^{(1)}(\boldsymbol{n})\hat{\boldsymbol{n}}^{\perp}]
}+\sqrt{\gamma_2(\boldsymbol{n})[(\hat{\boldsymbol{n}}^{\perp})^T
\boldsymbol{Z}_{k_{2}}^{(2)}
(\boldsymbol{n})\hat{\boldsymbol{n}}^{\perp}]}\right)^2\nonumber\\
&\geq\left(\gamma_1(\hat{\boldsymbol{n}})+
\gamma_2(\hat{\boldsymbol{n}})\right)^2=\gamma(\hat{\boldsymbol{n}})^2.
\end{align}
Combining \eqref{gam1221b} and \eqref{alternative definition of k_0}, we get $k_0(\boldsymbol{n})\leq k_{1}(\boldsymbol{n})+k_{2}(\boldsymbol{n})$ for $\boldsymbol{n}\in \mathbb{S}^1$.
\end{proof}

\subsection{Energy dissipation}
For the SP-PFEM \eqref{eqn:aniso SP-PFEM} , we have:
\begin{theorem}[energy dissipation]
\label{energy dissipation condition, theorem}
Assume the surface energy matrix $\boldsymbol{Z}_k(\boldsymbol{n})$ satisfies
\eqref{energy dissipation condition on Z_k, ineq},
then the SP-PFEM \eqref{eqn:aniso SP-PFEM} is unconditionally energy stable, i.e. for any $\tau>0$, we have
\begin{equation}\label{engdpfd}
W^{m+1}_c\leq W^m_c\leq  \ldots \le W^0_c=\sum_{j=1}^N|\boldsymbol{h}_j^0|\,\gamma(\boldsymbol{n}_j^0),\qquad \forall m\ge0.
\end{equation}
\end{theorem}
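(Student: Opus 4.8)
The plan is to follow the standard discrete gradient-flow argument for SP-PFEM: pick the two test functions that collapse the scheme into a telescoping energy inequality, reduce everything to a single quadratic form in $\boldsymbol{Z}_k(\boldsymbol{n}^m)$, and then invoke the matrix inequality \eqref{energy dissipation condition on Z_k, ineq} to compare this form with the new energy $W^{m+1}_c$. Since the chain \eqref{engdpfd} follows by iterating the one-step estimate $W^{m+1}_c\le W^m_c$ down to $m=0$, it suffices to establish this single-step decay.

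First I would take $\varphi^h=\mu^{m+1}$ in \eqref{eqn:aniso SP-PFEM eq1} and $\boldsymbol{\omega}^h=\boldsymbol{X}^{m+1}-\boldsymbol{X}^m$ in \eqref{eqn:aniso SP-PFEM eq2}. The coupling terms involving $\boldsymbol{n}^{m+\frac{1}{2}}$ and $\mu^{m+1}$ then coincide, so subtracting the two identities yields
\begin{equation*}
\Bigl(\boldsymbol{Z}_k(\boldsymbol{n}^m)\partial_s\boldsymbol{X}^{m+1},\partial_s(\boldsymbol{X}^{m+1}-\boldsymbol{X}^m)\Bigr)_{\Gamma^m}^h=-\tau\Bigl(\partial_s\mu^{m+1},\partial_s\mu^{m+1}\Bigr)_{\Gamma^m}^h\le 0.
\end{equation*}
Because \eqref{energy dissipation condition on Z_k, ineq} forces each $\boldsymbol{Z}_k(\boldsymbol{n}^m_j)$ to be symmetric positive definite (as shown in the preceding Corollary), I would apply the elementary identity $\boldsymbol{a}^T\boldsymbol{Z}\boldsymbol{a}-\boldsymbol{b}^T\boldsymbol{Z}\boldsymbol{b}=2\boldsymbol{a}^T\boldsymbol{Z}(\boldsymbol{a}-\boldsymbol{b})-(\boldsymbol{a}-\boldsymbol{b})^T\boldsymbol{Z}(\boldsymbol{a}-\boldsymbol{b})$ element-wise with $\boldsymbol{a}=\partial_s\boldsymbol{X}^{m+1}$ and $\boldsymbol{b}=\partial_s\boldsymbol{X}^m$; discarding the non-negative last term gives
\begin{equation*}
\Bigl(\boldsymbol{Z}_k(\boldsymbol{n}^m)\partial_s\boldsymbol{X}^{m+1},\partial_s\boldsymbol{X}^{m+1}\Bigr)_{\Gamma^m}^h\le\Bigl(\boldsymbol{Z}_k(\boldsymbol{n}^m)\partial_s\boldsymbol{X}^m,\partial_s\boldsymbol{X}^m\Bigr)_{\Gamma^m}^h.
\end{equation*}

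Next I would evaluate both sides on the mass-lumped mesh, where $\partial_s\boldsymbol{X}^m|_{I_j}=\boldsymbol{\tau}^m_j$ and $\partial_s\boldsymbol{X}^{m+1}|_{I_j}=\boldsymbol{h}_j^{m+1}/|\boldsymbol{h}_j^m|$ are piecewise constant. For the right-hand side, \eqref{reformulation of Zk, auxillary eq 2} gives $\boldsymbol{Z}_k(\boldsymbol{n}^m_j)\boldsymbol{\tau}^m_j=\boldsymbol{\xi}^{\perp}$ and hence $(\boldsymbol{\tau}^m_j)^T\boldsymbol{Z}_k(\boldsymbol{n}^m_j)\boldsymbol{\tau}^m_j=\gamma(\boldsymbol{n}^m_j)$, so the right-hand side equals exactly $W^m_c$. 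For the left-hand side I would apply \eqref{energy dissipation condition on Z_k, ineq} with $\boldsymbol{n}=\boldsymbol{n}^m_j$ and $\hat{\boldsymbol{n}}=\boldsymbol{n}^{m+1}_j$, noting that $\hat{\boldsymbol{n}}^{\perp}=\boldsymbol{\tau}^{m+1}_j=\boldsymbol{h}_j^{m+1}/|\boldsymbol{h}_j^{m+1}|$; this lower-bounds each element contribution and produces
\begin{equation*}
\sum_{j=1}^N\frac{|\boldsymbol{h}_j^{m+1}|^2\,\gamma(\boldsymbol{n}^{m+1}_j)^2}{|\boldsymbol{h}_j^m|\,\gamma(\boldsymbol{n}^m_j)}\le W^m_c.
\end{equation*}

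Finally, writing $a_j=|\boldsymbol{h}_j^{m+1}|\gamma(\boldsymbol{n}^{m+1}_j)>0$ and $b_j=|\boldsymbol{h}_j^m|\gamma(\boldsymbol{n}^m_j)>0$, the last display reads $\sum_j a_j^2/b_j\le\sum_j b_j=W^m_c$, so the Cauchy--Schwarz inequality yields $(W^{m+1}_c)^2=(\sum_j a_j)^2\le(\sum_j a_j^2/b_j)(\sum_j b_j)\le(\sum_j b_j)^2=(W^m_c)^2$, whence $W^{m+1}_c\le W^m_c$ and iteration gives \eqref{engdpfd}. I expect the only delicate point to be the mismatch in the left-hand quadratic form: $\boldsymbol{Z}_k$ is frozen at the old normal $\boldsymbol{n}^m$ while the target energy density lives at the new normal $\boldsymbol{n}^{m+1}$, and the entire purpose of inequality \eqref{energy dissipation condition on Z_k, ineq} is to bridge precisely this gap, with the concluding Cauchy--Schwarz step absorbing the remaining length/weight discrepancy. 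Everything else is routine bookkeeping on the mass-lumped inner product.
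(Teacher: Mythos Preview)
Your proof is correct and uses the same core ingredients as the paper: the test functions $\varphi^h=\mu^{m+1}$ and $\boldsymbol{\omega}^h=\boldsymbol{X}^{m+1}-\boldsymbol{X}^m$, the convexity inequality for the symmetric positive definite $\boldsymbol{Z}_k$, the identity $(\boldsymbol{\tau}^m_j)^T\boldsymbol{Z}_k(\boldsymbol{n}^m_j)\boldsymbol{\tau}^m_j=\gamma(\boldsymbol{n}^m_j)$, and the key matrix inequality \eqref{energy dissipation condition on Z_k, ineq} to pass from the old normal to the new one. The only difference is in the closing arithmetic. The paper keeps the two halves $\frac{1}{2}\bigl(\boldsymbol{Z}_k(\boldsymbol{n}^m)\partial_s\boldsymbol{X}^{m+1},\partial_s\boldsymbol{X}^{m+1}\bigr)_{\Gamma^m}^h$ and $\frac{1}{2}W_c^m$ together and applies AM--GM \emph{element-wise}, directly obtaining $\sum_j|\boldsymbol{h}_j^{m+1}|\gamma(\boldsymbol{n}_j^{m+1})=W_c^{m+1}$ as a lower bound. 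You instead first collapse to $\sum_j a_j^2/b_j\le\sum_j b_j$ and then invoke a \emph{global} Cauchy--Schwarz $(\sum_j a_j)^2\le(\sum_j a_j^2/b_j)(\sum_j b_j)$. Both routes are equivalent rearrangements of the same weighted-mean inequality; the paper's element-wise version is marginally sharper (it gives a per-segment bound rather than only the summed one), but for the stated theorem there is no practical difference.
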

\begin{proof}
Under \eqref{energy dissipation condition on Z_k, ineq}, we know that
$\boldsymbol{Z}_k(\boldsymbol{n})$ is symmetric positive definite. Thus we have
\begin{equation}\label{Zkuvm}	
\Bigl(\boldsymbol{Z}_k(\boldsymbol{n})\boldsymbol{u},\boldsymbol{u}
-\boldsymbol{v}\Bigr)\geq \frac{1}{2}\Bigl(\boldsymbol{Z}_k(\boldsymbol{n})\boldsymbol{u},
\boldsymbol{u}\Bigr)-\frac{1}{2}\Bigl(\boldsymbol{Z}_k(\boldsymbol{n})
\boldsymbol{v},\boldsymbol{v}\Bigr), \quad\forall \boldsymbol{u},\boldsymbol{v}\in{\mathbb R}^2.
\end{equation}
 Using
\eqref{reformulation of Zk, auxillary eq 2} and $\boldsymbol{\xi}\cdot\boldsymbol{n}=\gamma(\boldsymbol{n})$, we get
\begin{equation}\label{auxillary eq, energy dissipation} (\partial_s\boldsymbol{X}^m)^T\boldsymbol{Z}_k(\boldsymbol{n}^m)
\partial_s\boldsymbol{X}^m=\boldsymbol{\tau}^m\cdot (\boldsymbol{\xi}^m)^{\perp}=\gamma(\boldsymbol{n}^m).
\end{equation}
Combining \eqref{auxillary eq, energy dissipation} and \eqref{Zkuvm},
noting $\boldsymbol{Z}_k(\boldsymbol{n})$ satisfies \eqref{energy dissipation condition on Z_k, ineq}, we obtain	
\begin{align}\label{difference between two time steps}
&\Bigl(\boldsymbol{Z}_k(\boldsymbol{n}^m)\partial_s \boldsymbol{X}^{m+1},~\partial_s\boldsymbol{X}^{m+1}-
\partial_s\boldsymbol{X}^m\Bigr)_{\Gamma^m}^h+
\int_{\Gamma^m}\gamma(\boldsymbol{n}^m)ds\nonumber\\
&\geq\frac{1}{2}\Bigl(\boldsymbol{Z}_k(\boldsymbol{n}^m)\partial_s \boldsymbol{X}^{m+1},~\partial_s\boldsymbol{X}^{m+1}
\Bigr)_{\Gamma^m}^h+\frac{1}{2}\int_{\Gamma^m}\gamma(
\boldsymbol{n}^m)ds\nonumber\\
&=\sum_{j=1}^N\frac{\left(\boldsymbol{h}_j^{m+1}\right)^T
\boldsymbol{Z}_k(\boldsymbol{n}_j^m)\boldsymbol{h}_j^{m+1}+\gamma(
\boldsymbol{n}_j^m)|\boldsymbol{h}_j^m|^2}{2|\boldsymbol{h}_j^m|}\nonumber\\
&\geq\sum_{j=1}^N|\boldsymbol{h}_j^{m+1}|\sqrt{
\left(\left(\boldsymbol{n}_j^{m+1}\right)^\perp\right)^T
\boldsymbol{Z}_k(\boldsymbol{n}_j^m)\left(\boldsymbol{n}_j^{m+1}
\right)^\perp\gamma(\boldsymbol{n}_j^m)}\nonumber\\
&\geq\sum_{j=1}^N|\boldsymbol{h}_j^{m+1}|\sqrt{\frac{
\gamma^2(\boldsymbol{n}^{m+1}_j)}{\gamma(\boldsymbol{n}^m_j)}
\gamma(\boldsymbol{n}^m_j)}=\sum_{j=1}^N|\boldsymbol{h}_j^{m+1}|
\gamma(\boldsymbol{n}^{m+1}_j)=\int_{\Gamma^{m+1}}
\gamma(\boldsymbol{n}^{m+1})ds.
\end{align}
Taking $\varphi^h=\mu^{m+1}$ in
\eqref{eqn:aniso SP-PFEM eq1}
and $\boldsymbol{\omega}^h=\boldsymbol{X}^{m+1}-\boldsymbol{X}^m$ in
\eqref{eqn:aniso SP-PFEM eq2} and combining the inequality
\eqref{difference between two time steps}, we get
\begin{align}\label{energy disspation gamma(n) final}
W^{m+1}_c-W^m_c&=\int_{\Gamma^{m+1}}\gamma(\boldsymbol{n}^{m+1})ds
-\int_{\Gamma^m}\gamma(\boldsymbol{n}^{m})ds\nonumber\\
&\leq\Bigl(\boldsymbol{Z}_k(\boldsymbol{n}^m)\partial_s \boldsymbol{X}^{m+1},~\partial_s\boldsymbol{X}^{m+1}-
\partial_s\boldsymbol{X}^m\Bigr)_{\Gamma^m}^h\nonumber\\
&=-\tau\Bigl(\partial_s\mu^{m+1},
~\partial_s\mu^{m+1}\Bigr)_{\Gamma^m}^h\leq 0,\quad\forall m\ge0,
\end{align}
which implies the energy dissipation \eqref{engdpfd} for
the SP-PFEM \eqref{eqn:aniso SP-PFEM}.
\end{proof}

Combining Theorems \ref{energy dissipation condition and k_0} and
\ref{energy dissipation condition, theorem}, finally we have

\begin{corollary}[energy dissipation]
\label{Energy dissipation, n theorem}
Assume $\gamma(\boldsymbol{n})$ satisfies \eqref{engstabgmp} and taking $k(\boldsymbol{n})\ge k_0(\boldsymbol{n})$ in
\eqref{def of the energy matrix Z_k(n)},
then the SP-PFEM \eqref{eqn:aniso SP-PFEM} is unconditionally energy stable.
\end{corollary}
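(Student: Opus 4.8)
The plan is to recognize that this corollary is simply the composition of the two preceding theorems, so the proof reduces to verifying that the conclusion of one feeds exactly into the hypothesis of the other. First I would invoke Theorem~\ref{energy dissipation condition and k_0}: under the standing assumption \eqref{engstabgmp} on $\gamma(\boldsymbol{n})$, together with the choice $k(\boldsymbol{n})\ge k_0(\boldsymbol{n})$ in the surface energy matrix \eqref{def of the energy matrix Z_k(n)}, that theorem guarantees the key pointwise inequality \eqref{energy dissipation condition on Z_k, ineq}, namely $\gamma(\boldsymbol{n})[(\hat{\boldsymbol{n}}^{\perp})^T \boldsymbol{Z}_k(\boldsymbol{n})\hat{\boldsymbol{n}}^{\perp}]\ge \gamma(\hat{\boldsymbol{n}})^2$ for all $\boldsymbol{n},\hat{\boldsymbol{n}}\in\mathbb{S}^1$. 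This is the only nontrivial ingredient, and it is already supplied.

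Next I would feed this inequality directly into Theorem~\ref{energy dissipation condition, theorem}. Since the sole hypothesis of that theorem is precisely that $\boldsymbol{Z}_k(\boldsymbol{n})$ satisfies \eqref{energy dissipation condition on Z_k, ineq}, its conclusion applies verbatim: the SP-PFEM \eqref{eqn:aniso SP-PFEM} is unconditionally energy stable, i.e.\ $W^{m+1}_c\le W^m_c$ for all $m\ge0$ and every $\tau>0$. Chaining the two implications closes the argument and yields the corollary.

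The substance of the proof therefore lies entirely in the two theorems already established—the positive definiteness and the sharp lower bound encoded in \eqref{energy dissipation condition on Z_k, ineq} on the one hand, and the telescoping discrete energy estimate built from the Cauchy-type inequality in \eqref{difference between two time steps} on the other—so at the level of the corollary there is no real obstacle to overcome. The only point worth checking is internal consistency of notation: the minimal stabilizing function $k_0(\boldsymbol{n})$ appearing in the hypothesis is the same object, defined as the maximum of $F(\boldsymbol{n},\hat{\boldsymbol{n}})$ in \eqref{existence of k_0(n)}, that Theorem~\ref{energy dissipation condition and k_0} shows to be well defined and equivalent to the infimum characterization \eqref{alternative definition of k_0}. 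Once that identification is acknowledged, the two theorems compose without friction and the proof is complete.
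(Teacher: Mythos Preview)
Your proposal is correct and matches the paper's approach exactly: the paper states this corollary immediately after writing ``Combining Theorems \ref{energy dissipation condition and k_0} and \ref{energy dissipation condition, theorem}, finally we have,'' with no further proof given. You have identified precisely this composition---Theorem~\ref{energy dissipation condition and k_0} supplies inequality \eqref{energy dissipation condition on Z_k, ineq} from the hypotheses, and Theorem~\ref{energy dissipation condition, theorem} converts that inequality into unconditional energy stability---so there is nothing to add.
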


\subsection{Explicit formulas for the minimal stabilizing function}
Here we give explicit formulas of the minimal stabilizing function
$k_0(\boldsymbol{n})$ for several popular anisotropic surface energies
$\gamma(\boldsymbol{n})$ in applications. Denote
\[\boldsymbol{J}=\begin{pmatrix}0&-1\\1&0\end{pmatrix},
\qquad \boldsymbol{Z}_0(\boldsymbol{n})=\begin{pmatrix}1&n_1n_2\\n_1n_2&1\end{pmatrix},
\qquad \forall \,\boldsymbol{n}=\begin{pmatrix}n_1\\ n_2\end{pmatrix}\in
{\mathbb S}^1. \]

\begin{lemma}[\color{black}Riemannian-like \color{black}  metric]\label{lem: Riemannian}
When $\gamma(\boldsymbol{n})$ is taken as the \color{black}Riemannian-like \color{black}  metric anisotropic surface energy \eqref{ellipsoidal},  we have $k_0(\boldsymbol{n})\leq k_1(\boldsymbol{n})$, where
\begin{equation}\label{k0nRm11}
k_1(\boldsymbol{n})=\sum_{l=1}^L\gamma_l(\boldsymbol{n})^{-1}
\text{Tr}(\boldsymbol{G}_l),\qquad
\boldsymbol{Z}_{k_1}(\boldsymbol{n})=
\sum_{l=1}^L\gamma_l(\boldsymbol{n})^{-1}\boldsymbol{J}^T
\boldsymbol{G}_l\boldsymbol{J}, \qquad
\forall
\boldsymbol{n}\in \mathbb{S}^1,
\end{equation}
and $k_0(\boldsymbol{n})=k_1(\boldsymbol{n})$ if $L=1$.
\end{lemma}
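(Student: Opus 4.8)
The plan is to reduce everything to the single-metric case $L=1$ and then invoke the subadditivity established in Lemma \ref{lemma: sub-linear}. For one Riemannian term $\gamma_l(\boldsymbol{n})=\sqrt{\boldsymbol{n}^T\boldsymbol{G}_l\boldsymbol{n}}$ the homogeneous extension is $\gamma_l(\boldsymbol{p})=\sqrt{\boldsymbol{p}^T\boldsymbol{G}_l\boldsymbol{p}}$, so from \eqref{The xi-vector} a direct differentiation gives the Cahn--Hoffman vector $\boldsymbol{\xi}_l=\gamma_l(\boldsymbol{n})^{-1}\boldsymbol{G}_l\boldsymbol{n}$. First I would substitute this $\boldsymbol{\xi}_l$ together with $k_1^{(l)}(\boldsymbol{n})=\gamma_l(\boldsymbol{n})^{-1}\mathrm{Tr}(\boldsymbol{G}_l)$ into the definition \eqref{def of the energy matrix Z_k(n)} of the surface energy matrix and prove the closed form $\boldsymbol{Z}^{(l)}_{k_1^{(l)}}(\boldsymbol{n})=\gamma_l(\boldsymbol{n})^{-1}\boldsymbol{J}^T\boldsymbol{G}_l\boldsymbol{J}$. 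Summing over $l$, and using $\boldsymbol{\xi}=\sum_l\boldsymbol{\xi}_l$, $\gamma=\sum_l\gamma_l$ and $k_1=\sum_l k_1^{(l)}$, then yields the stated matrix formula $\boldsymbol{Z}_{k_1}(\boldsymbol{n})=\sum_{l=1}^L\gamma_l(\boldsymbol{n})^{-1}\boldsymbol{J}^T\boldsymbol{G}_l\boldsymbol{J}$.

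The algebraic engine behind this identity is the two-dimensional fact that for any symmetric $2\times2$ matrix $\boldsymbol{G}$ one has $\boldsymbol{J}^T\boldsymbol{G}\boldsymbol{J}=\mathrm{Tr}(\boldsymbol{G})I_2-\boldsymbol{G}$, together with $\boldsymbol{n}^\perp=\boldsymbol{J}^T\boldsymbol{n}$ and $\boldsymbol{J}\boldsymbol{J}^T=I_2$. To verify $\boldsymbol{Z}^{(l)}_{k_1^{(l)}}(\boldsymbol{n})=\gamma_l(\boldsymbol{n})^{-1}(\mathrm{Tr}(\boldsymbol{G}_l)I_2-\boldsymbol{G}_l)$ I would test both sides against the orthonormal frame $\{\boldsymbol{n},\boldsymbol{n}^\perp\}$, using $\gamma_l(\boldsymbol{n})^2=\boldsymbol{n}^T\boldsymbol{G}_l\boldsymbol{n}$ and the splitting $\mathrm{Tr}(\boldsymbol{G}_l)=\boldsymbol{n}^T\boldsymbol{G}_l\boldsymbol{n}+(\boldsymbol{n}^\perp)^T\boldsymbol{G}_l\boldsymbol{n}^\perp$; the two vectors produce matching images, which settles the identity.

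With the closed form in hand, the sharpness claim for $L=1$ follows cleanly from the alternative characterization \eqref{alternative definition of k_0} of $k_0$. The crucial observation is that $\boldsymbol{J}\hat{\boldsymbol{n}}^\perp=\hat{\boldsymbol{n}}$, so that
\[
(\hat{\boldsymbol{n}}^\perp)^T\boldsymbol{Z}_{k_1}(\boldsymbol{n})\hat{\boldsymbol{n}}^\perp
=\gamma(\boldsymbol{n})^{-1}(\boldsymbol{J}\hat{\boldsymbol{n}}^\perp)^T\boldsymbol{G}(\boldsymbol{J}\hat{\boldsymbol{n}}^\perp)
=\gamma(\boldsymbol{n})^{-1}\hat{\boldsymbol{n}}^T\boldsymbol{G}\hat{\boldsymbol{n}}
=\gamma(\boldsymbol{n})^{-1}\gamma(\hat{\boldsymbol{n}})^2 .
\]
Hence $\gamma(\boldsymbol{n})[(\hat{\boldsymbol{n}}^\perp)^T\boldsymbol{Z}_{k_1}(\boldsymbol{n})\hat{\boldsymbol{n}}^\perp]=\gamma(\hat{\boldsymbol{n}})^2$ holds \emph{with equality} for every $\hat{\boldsymbol{n}}\in\mathbb{S}^1$, so $k_1$ satisfies the constraint in \eqref{alternative definition of k_0} and $k_0\le k_1$; conversely, replacing $k_1$ by any smaller $k$ changes the left side by $\gamma(\boldsymbol{n})(k-k_1)(\boldsymbol{n}\cdot\hat{\boldsymbol{n}}^\perp)^2<0$ whenever $\hat{\boldsymbol{n}}\ne\pm\boldsymbol{n}$, violating the constraint, and therefore $k_0=k_1$ when $L=1$.

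For general $L$, I would simply combine the single-term result $k_0^{(l)}(\boldsymbol{n})=\gamma_l(\boldsymbol{n})^{-1}\mathrm{Tr}(\boldsymbol{G}_l)$ with the subadditivity in Lemma \ref{lemma: sub-linear}(ii), which gives $k_0(\boldsymbol{n})\le\sum_{l=1}^L k_0^{(l)}(\boldsymbol{n})=k_1(\boldsymbol{n})$. The main obstacle is concentrated in the first two paragraphs: spotting and exploiting the clean reductions $\boldsymbol{J}^T\boldsymbol{G}\boldsymbol{J}=\mathrm{Tr}(\boldsymbol{G})I_2-\boldsymbol{G}$ and $\boldsymbol{J}\hat{\boldsymbol{n}}^\perp=\hat{\boldsymbol{n}}$, which together collapse the quadratic form $(\hat{\boldsymbol{n}}^\perp)^T\boldsymbol{Z}_{k_1}\hat{\boldsymbol{n}}^\perp$ exactly back to $\gamma(\hat{\boldsymbol{n}})^2$; once these are identified the remainder is routine bookkeeping.
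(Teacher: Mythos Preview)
Your proof is correct and follows the same overall architecture as the paper's (reduce to $L=1$, verify the closed matrix form, check the quadratic identity, then invoke Lemma~\ref{lemma: sub-linear} for $L>1$), but the sharpness argument $k_0(\boldsymbol{n})\ge k_1(\boldsymbol{n})$ in the $L=1$ case is genuinely different. The paper establishes the reverse inequality by taking the limit $\hat{\boldsymbol{n}}\to\boldsymbol{n}$ in $F(\boldsymbol{n},\hat{\boldsymbol{n}})$ via Theorem~4.1 and then computing $(\boldsymbol{n}^\perp)^T{\bf H}_\gamma(\boldsymbol{n})\boldsymbol{n}^\perp+|\boldsymbol{\xi}|^2/\gamma(\boldsymbol{n})$ explicitly from the Hessian \eqref{Hessian for gamma}, which after some algebra collapses to $\gamma(\boldsymbol{n})^{-1}\mathrm{Tr}(\boldsymbol{G})$. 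Your route is more elementary: since the constraint in \eqref{alternative definition of k_0} holds with \emph{equality} for every $\hat{\boldsymbol{n}}$, and since the only $k$-dependence in $\boldsymbol{Z}_k$ is the term $k(\boldsymbol{n})\boldsymbol{n}\boldsymbol{n}^T$, lowering $k$ below $k_1$ subtracts the strictly positive quantity $\gamma(\boldsymbol{n})(k_1-k)(\boldsymbol{n}\cdot\hat{\boldsymbol{n}}^\perp)^2$ and breaks the constraint for any $\hat{\boldsymbol{n}}\ne\pm\boldsymbol{n}$. This avoids both the limit theorem and the Hessian computation; the paper's approach, by contrast, ties the result back to the general machinery of Section~4.1 and illustrates that the limit \eqref{limFnnh} is attained. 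Your explicit use of $\boldsymbol{J}^T\boldsymbol{G}\boldsymbol{J}=\mathrm{Tr}(\boldsymbol{G})I_2-\boldsymbol{G}$ also makes the matrix verification cleaner than the paper's entrywise expansion.
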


\color{black}
The proof can be found in Appendix B.
\color{black}

\begin{remark}
By taking $k(\boldsymbol{n})=k_1(\boldsymbol{n})$ in \eqref{def of the energy matrix Z_k(n)} and using the semi-implicit discretization $\boldsymbol{n}^m$ instead of $\boldsymbol{n}^{m+\frac{1}{2}}$, the SP-PFEM \eqref{eqn:aniso SP-PFEM} collapses to the BGN formulation used in \cite{barrett2008numerical}.
\end{remark}

\begin{lemma}[$l^r$-norm metric]\label{lem: lr} When $\gamma(\boldsymbol{n})$ is taken as
the $l^r$-norm metric anisotropic surface energy \eqref{lrnormase}, we have

(i) when $r=4$, $k_0(\boldsymbol{n})=2\gamma(\boldsymbol{n})^{-3}$ and
$\boldsymbol{Z}_{k_0}(\boldsymbol{n})=\gamma(\boldsymbol{n})^{-3}\boldsymbol{Z}_0(\boldsymbol{n})$, and

(ii) when $r=6$, $k_0(\boldsymbol{n})=
2\gamma(\boldsymbol{n})^{-5}(n_1^4+n_1^2n_2^2+n_2^4)$.
\end{lemma}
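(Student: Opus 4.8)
The plan is to pin down $k_0(\boldsymbol n)$ in each case from Theorem~\ref{energy dissipation condition and k_0}, proving $k_0(\boldsymbol n)\le(\text{stated value})$ and $k_0(\boldsymbol n)\ge(\text{stated value})$ separately. Write $\gamma=\gamma(\boldsymbol n)$; differentiating $\gamma(\boldsymbol p)=(p_1^r+p_2^r)^{1/r}$ gives $\boldsymbol\xi=\gamma^{1-r}(n_1^{r-1},n_2^{r-1})^T$. The uniform device for the lower bound is to test $F$ at the admissible competitor $\hat{\boldsymbol n}=\pm(n_2,n_1)\in\mathbb S^1_{\boldsymbol n}$, for which $\hat{\boldsymbol n}^\perp=\pm(n_1,-n_2)$ and $\gamma(\hat{\boldsymbol n})=\gamma$ by the symmetry of the $l^r$-norm; then in \eqref{Fnnhat} the $\gamma(\hat{\boldsymbol n})^2-\gamma^2$ term drops and a short computation gives
\[
F\big(\boldsymbol n,(n_2,n_1)\big)=\frac{2\gamma^{1-r}(n_1^r-n_2^r)}{n_1^2-n_2^2},
\]
which, after cancelling $n_1^2-n_2^2$, equals $2\gamma^{-3}$ when $r=4$ and $2\gamma^{-5}(n_1^4+n_1^2n_2^2+n_2^4)$ when $r=6$. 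Since $\boldsymbol n\cdot\hat{\boldsymbol n}^\perp=n_1^2-n_2^2\ne0$ off the diagonal $n_1^2=n_2^2$, this already forces $k_0(\boldsymbol n)\ge(\text{stated value})$ there, and the diagonal directions follow by continuity of $k_0$, or equivalently from the limit formula \eqref{limFnnh}.

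For the upper bound I would first pass, using the reflection symmetry $\gamma(\hat{\boldsymbol n}^\perp)=\gamma(\hat{\boldsymbol n})$ of the $l^r$-norm together with the degree-two homogeneity of $\gamma(\cdot)^2$, from \eqref{energy dissipation condition on Z_k, ineq} to the equivalent quadratic-form domination $\gamma\,\boldsymbol v^T\boldsymbol Z_{k_0}(\boldsymbol n)\boldsymbol v\ge\gamma(\boldsymbol v)^2$ for all $\boldsymbol v=\hat{\boldsymbol n}^\perp\in\mathbb R^2$. When $r=4$, a direct matrix computation (using $n_1^2+n_2^2=1$ and $\gamma^4=n_1^4+n_2^4$) shows that $k_0=2\gamma^{-3}$ makes $\boldsymbol Z_{k_0}(\boldsymbol n)=\gamma^{-3}\boldsymbol Z_0(\boldsymbol n)$, so that $\gamma\,\boldsymbol v^T\boldsymbol Z_{k_0}\boldsymbol v=\gamma^{-2}(v_1^2+v_2^2+2n_1n_2v_1v_2)$; squaring the target inequality (both sides are nonnegative) reduces it to the identity
\[
(v_1^2+v_2^2+2n_1n_2v_1v_2)^2-(n_1^4+n_2^4)(v_1^4+v_2^4)=2\big(n_1n_2(v_1^2+v_2^2)+v_1v_2\big)^2\ge0,
\]
which closes part (i) and also exhibits $\boldsymbol Z_{k_0}=\gamma^{-3}\boldsymbol Z_0$ as claimed.

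The case $r=6$ is where the real work lies, and I expect the upper bound to be the main obstacle. Substituting the stated $k_0$ and simplifying with $n_1^2+n_2^2=1$ collapses $\boldsymbol Z_{k_0}(\boldsymbol n)$ to the matrix with equal diagonal entries $\gamma+2\gamma^{-5}n_1^2n_2^2$ and off-diagonal entry $\gamma^{-5}n_1n_2$, whence $\gamma\,\boldsymbol v^T\boldsymbol Z_{k_0}\boldsymbol v=\gamma^{-4}\big[(1-n_1^2n_2^2)(v_1^2+v_2^2)+2n_1n_2v_1v_2\big]$. This is nonnegative on the relevant range, so the target $\gamma\,\boldsymbol v^T\boldsymbol Z_{k_0}\boldsymbol v\ge(v_1^6+v_2^6)^{1/3}$ is equivalent to its cube; normalising $v_1^2+v_2^2=1$ and setting $t:=v_1v_2\in[-\tfrac12,\tfrac12]$, $s:=n_1n_2\ge0$ (WLOG, by reflection), $w:=s^2$, and using $\gamma^{12}=(1-3w)^2$ and $v_1^6+v_2^6=1-3t^2$, this becomes the one-variable cubic inequality
\[
\Phi(t):=\big[(1-w)+2st\big]^3-(1-3w)^2(1-3t^2)\ \ge\ 0,\qquad t\in[-\tfrac12,\tfrac12].
\]

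The decisive step is to check that $t=-s$ is a \emph{double} root of $\Phi$ (I would verify $\Phi(-s)=\Phi'(-s)=0$ identically in $s$), so that $\Phi(t)=8sw\,(t+s)^2(t-t_1)$ with $8sw\ge0$; the inequality then reduces to showing the remaining root obeys $t_1\le-\tfrac12$, which by Vieta's formula is the elementary estimate $3-6s^2-4s^3-s^4\ge0$ on $s\in[0,\tfrac12]$. Along the way I would dispatch three minor points: (a) the positivity justifying the squaring and cubing, which holds because $1-s-s^2\ge0$ for $s\in[0,\tfrac12]$; (b) the degenerate directions $n_1n_2=0$, where $\boldsymbol Z_{k_0}$ is diagonal and $\Phi(t)=3t^2\ge0$ trivially; and (c) the matching of the lower bound on the diagonal $n_1^2=n_2^2$, where the maximiser of $F$ degenerates to the limit $\hat{\boldsymbol n}\to\boldsymbol n$ and one appeals to \eqref{limFnnh} rather than to the competitor $(n_2,n_1)$.
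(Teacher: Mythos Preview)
Your proof is correct. The lower bound via the competitor $\hat{\boldsymbol n}=(n_2,n_1)$ and the $r=4$ upper bound (the perfect-square identity after squaring) are essentially the paper's own arguments; in particular, the paper's use of $2\sqrt{ab}\le a+b$ on $\sqrt{(n_1^4+n_2^4)(\hat n_1^4+\hat n_2^4)}$ is just your squaring step in disguise, and both arrive at the same square $(n_1n_2-\hat n_1\hat n_2)^2\ge0$.

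For the $r=6$ upper bound you take a genuinely different route. The paper applies the AM--GM inequality
\[
\sqrt[3]{(n_1^6+n_2^6)^2(\hat n_1^6+\hat n_2^6)}\le \frac{2(n_1^6+n_2^6)+(\hat n_1^6+\hat n_2^6)}{3}
\]
to linearise the cube root $\gamma(\hat{\boldsymbol n})^2$, after which a short expansion collapses the remainder to the single perfect square $\gamma(\boldsymbol n)^{-4}(n_1n_2-\hat n_1\hat n_2)^2\ge0$. Your approach instead cubes both sides and factors the resulting cubic $\Phi(t)$ via the double root $t=-s$; this detects the same equality locus (under $\boldsymbol v=\hat{\boldsymbol n}^\perp$ the condition $t=-s$ is exactly $\hat n_1\hat n_2=n_1n_2$) but closes via the auxiliary root estimate $3-6s^2-4s^3-s^4\ge0$ on $[0,\tfrac12]$. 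The paper's AM--GM step is shorter and avoids the case analysis on $s=0$, while your route computes the exact cubed difference rather than an upper bound of it, which is more systematic and might extend more mechanically to other even $r$.
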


\color{black}
The proof can be found in Appendix C.

\begin{lemma}[$m$-fold]\label{lem: 2fold} When $\gamma(\boldsymbol{n})$ is taken as the $m$-fold anisotropy \eqref{kfold},  we have

(i) when $\gamma(\boldsymbol{n})=1+\beta \cos 2\theta$, then
\begin{equation}\label{k02fold1}
  k_0(\boldsymbol{n})=4-2\gamma(\boldsymbol{n})+\frac{4\beta^2}
  {\gamma(\boldsymbol{n})}; and
\end{equation}

(ii) when $\gamma(\boldsymbol{n})=1+\beta \cos 4\theta$, then
\begin{equation}\label{k02fold2}
  k_0(\boldsymbol{n})\leq 2\gamma(\boldsymbol{n})+\frac{16\beta+16\beta^2}{\gamma(\boldsymbol{n})}:=
  k_1(\boldsymbol{n}).
\end{equation}
\end{lemma}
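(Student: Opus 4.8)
The plan is to reduce everything to the single--variable reformulation of the minimal stabilizing function in Remark~\ref{rmk 4.1}: setting $\phi=\hat\theta-\theta$, one has $\tilde k_0(\theta)=\max_{\phi\in[-\pi/2,\pi/2]}\tilde F^\theta(\theta+\phi)$ with $\tilde F^\theta$ given by \eqref{tildeF}. For part~(i) I would compute this maximum exactly, while for part~(ii), rather than locating the maximum, I would invoke the alternative characterization \eqref{alternative definition of k_0}: it reduces the claim $k_0(\boldsymbol n)\le k_1(\boldsymbol n)$ to verifying the pointwise inequality $\tilde F^\theta(\theta+\phi)\le k_1(\boldsymbol n)$ for every admissible $\phi$.

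For part~(i), with $\hat\gamma(\theta)=1+\beta\cos2\theta$, $\hat\gamma'(\theta)=-2\beta\sin2\theta$, and $\hat\gamma(\hat\theta)=1+\beta\cos(2\theta+2\phi)$, I would substitute into \eqref{tildeF}. The key feature of the two--fold case is that the geometric factor $\sin\phi$ and the energy's angular factor $2\phi$ match, so after double--angle and product--to--sum identities the singular factor $\sin^{-2}\phi$ cancels and $\tilde F^\theta$ collapses to the bounded continuous expression
\[
\tilde F^\theta(\theta+\phi)=\frac{2+2\beta^2\cos^2 2\theta-4\beta^2\cos\phi\cos(4\theta+\phi)}{1+\beta\cos2\theta}.
\]
Only the last numerator term depends on $\phi$; writing $\cos\phi\cos(4\theta+\phi)=\tfrac12\cos4\theta+\tfrac12\cos(4\theta+2\phi)$ shows that maximizing $\tilde F^\theta$ amounts to minimizing $\cos(4\theta+2\phi)$ over $\phi\in[-\pi/2,\pi/2]$, whose minimum is $-1$ (attained because $4\theta+2\phi$ sweeps an interval of length $2\pi$). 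Substituting this value and using $\cos4\theta=2\cos^2 2\theta-1$ yields exactly \eqref{k02fold1}. The point $\phi=0$ causes no trouble since the collapsed expression is continuous there and agrees with the limit formula \eqref{limFnnh}.

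For part~(ii), with $\gamma(\boldsymbol n)=1+\beta\cos4\theta$, the characterization \eqref{alternative definition of k_0} and clearing the positive denominator reduce the claim to
\[
\mathcal N:=\hat\gamma(\hat\theta)^2-\hat\gamma(\theta)^2-2\hat\gamma(\theta)\hat\gamma'(\theta)\cos\phi\sin\phi\ \le\ 16\beta(1+\beta)\sin^2\phi .
\]
I would expand $\mathcal N$ and split it into its $O(\beta)$ and $O(\beta^2)$ parts. The linear part simplifies to $-16\beta\sin^2\phi\,\cos\phi\cos(4\theta+\phi)$, which is $\le16\beta\sin^2\phi$ since $|\cos\phi\cos(4\theta+\phi)|\le1$. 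For the quadratic part I would fix $\phi$ and maximize over $\theta$: it has the form $\beta^2\bigl(A(\phi)\cos8\theta+B(\phi)\sin8\theta\bigr)$, so its $\theta$--maximum is the amplitude $\beta^2\sqrt{\sin^4 4\phi+(2\sin2\phi-\tfrac12\sin8\phi)^2}$, reducing the quadratic bound to the single--variable inequality $\sqrt{\sin^4 4\phi+(2\sin2\phi-\tfrac12\sin8\phi)^2}\le16\sin^2\phi$. Adding the two estimates gives $\mathcal N\le16\beta(1+\beta)\sin^2\phi$, hence \eqref{k02fold2}.

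The hard part will be the quadratic estimate in part~(ii). The two bounds are each tight only in the limit $\phi\to0$ (for suitable $\theta$) and not simultaneously, so the decomposition is safe but the constant $16\beta+16\beta^2$ is deliberately not sharp; establishing the reduced single--variable inequality rigorously requires squaring it and reducing to a polynomial inequality in $\cos2\phi$ (or $\sin^2\phi$), which is elementary but tedious, and one must separately confirm the removable singularity at $\phi=0$ through the limit formula \eqref{limFnnh}. The two--fold case, by contrast, is clean precisely because the exact cancellation of $\sin^{-2}\phi$ makes the maximization immediate.
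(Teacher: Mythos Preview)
Your approach is correct. For part~(i) it coincides with the paper's proof: both simplify $\tilde F^\theta(\hat\theta)$ to a bounded expression in which only a single cosine of $\hat\theta$ survives, and read off the maximum; the paper just presents the simplified form (obtained by a CAS) directly and exhibits the maximizing $\hat\theta=\pi/2-\theta$, whereas you reach the same formula via product--to--sum identities.

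For part~(ii) the methods diverge. The paper again relies on a CAS simplification to write
\[
\tilde F^\theta(\hat\theta)-2\gamma(\boldsymbol n)=-\frac{16\beta\cos\phi\cos(4\theta+\phi)}{\gamma(\boldsymbol n)}-\frac{4\beta^2\cos\phi\bigl(2\cos(8\theta+\phi)+\cos(8\theta+3\phi)+\cos(8\theta+5\phi)\bigr)}{\gamma(\boldsymbol n)},
\]
where the singular $\sin^{-2}\phi$ has already cancelled; bounding every cosine by $1$ in absolute value immediately yields $16\beta+16\beta^2$ in the numerator. Your route---splitting $\mathcal N$ into $O(\beta)$ and $O(\beta^2)$ pieces and, for the quadratic piece, maximizing over $\theta$ first via the amplitude $\sqrt{A(\phi)^2+B(\phi)^2}$---is equally valid and in fact gives the same constant, but the residual one--variable inequality $\sin^4 4\phi+(2\sin2\phi-\tfrac12\sin8\phi)^2\le 256\sin^4\phi$ is less immediate than in the paper. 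It does, however, collapse neatly: with $c=\cos2\phi$ the left side becomes $(1-c)^2(4+16c+28c^2+16c^3)$ while the right side is $64(1-c)^2$, and the remaining inequality factors as $(c-1)(16c^2+44c+60)\le0$, which is manifest on $[-1,1]$. So your plan works; the paper's version simply avoids this last computation by exposing the factored form up front.
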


The proof can be found in Appendix D.
\color{black}

\section{Numerical results}

In this section, we numerically implement the SP-PFEM \eqref{eqn:aniso SP-PFEM}
for simulating the evolution of closed curves under anisotropic surface diffusion. Numerical results demonstrate the high performance of the proposed scheme, e.g., the spatial/temporal convergence rates, energy dissipation, area conservation, and asymptotic quasi-uniform mesh distribution.
Here, the distance between two closed curves $\Gamma_1$ and $\Gamma_2$ is measured by the manifold distance $M(\Gamma_1,\Gamma_2)$
which was introduced in the reference \cite{bao2020energy}.

Since formally the scheme is first-order accurate in time and second-order accurate in space, the mesh size $h$ and the time step $\tau$ are chosen as $\tau=\mathcal{O}(h^2)$, e.g. $\tau=h^2$, except where noted.
Let $\Gamma^{m}$ be the numerical approximation of $\Gamma^h(t=t_m=m\tau)$
with  mesh size $h$ and time step $\tau$, the numerical error is then measured as
\begin{equation}
e^{h}(t_m):=M(\Gamma^{m},\Gamma(t=t_m)),\qquad m\ge0.
\end{equation}
Because the exact solution can not be obtained analytically, we choose fine meshes $h=h_e$, $\tau=\tau_e$ to obtain $\Gamma(t=t_m)$ numerically, e.g.
$h_e=2^{-8}$ and $\tau_e=2^{-16}$.

The normalized area loss and the mesh ratio $R^h(t_m)$, which
indicates the mesh quality during evolution,
are defined as
\begin{eqnarray}
\qquad \frac{\Delta A^h(t_m)}{A^h(0)}:=\frac{A^h(t_m)-A^h(0)}{A^h(0)},\quad
R^h(t_m):=\frac{\max_{1\le j\le N}\ |\boldsymbol{h}_j^m|}{\min_{1\le j\le N}\ |\boldsymbol{h}_j^m|},\quad m\ge0,
\end{eqnarray}
where $A^h(t_m)$ is the area of the inner region enclosed by
$\Gamma^{m}$. 

In the following simulations, the initial shape in \eqref{init}
is always chosen as an ellipse with length $4$ and width $1$ except where noted,
and the tolerance of the Newton iteration in the SP-PFEM \eqref{eqn:aniso SP-PFEM} is chosen as $10^{-12}$.

\subsection{Convergence rates and energy dissipation} In order to test convergence rates of the SP-PFEM \eqref{eqn:aniso SP-PFEM}, without loss of generality,
we choose the following two kinds of anisotropic surface energies:

\begin{itemize}
\item Case I: the \color{black}Riemannian-like \color{black}  metric anisotropic surface energy \eqref{ellipsoidal} with $L=1$ and $\boldsymbol{G}_1={\rm diag}(1,2):=\boldsymbol{G}$, and the corresponding minimal stabilizing function $k_0(\boldsymbol{n})$ is given explicitly in \eqref{k0nRm11};

\item Case II: the $l^r$-norm metric anisotropic surface energy \eqref{lrnormase} with $r=4$ and the corresponding minimal stabilizing function $k_0(\boldsymbol{n})$ is given explicitly in Lemma \ref{lem: lr}.
\end{itemize}

\begin{figure}[htp!]
\centering
\includegraphics[width=1\textwidth]{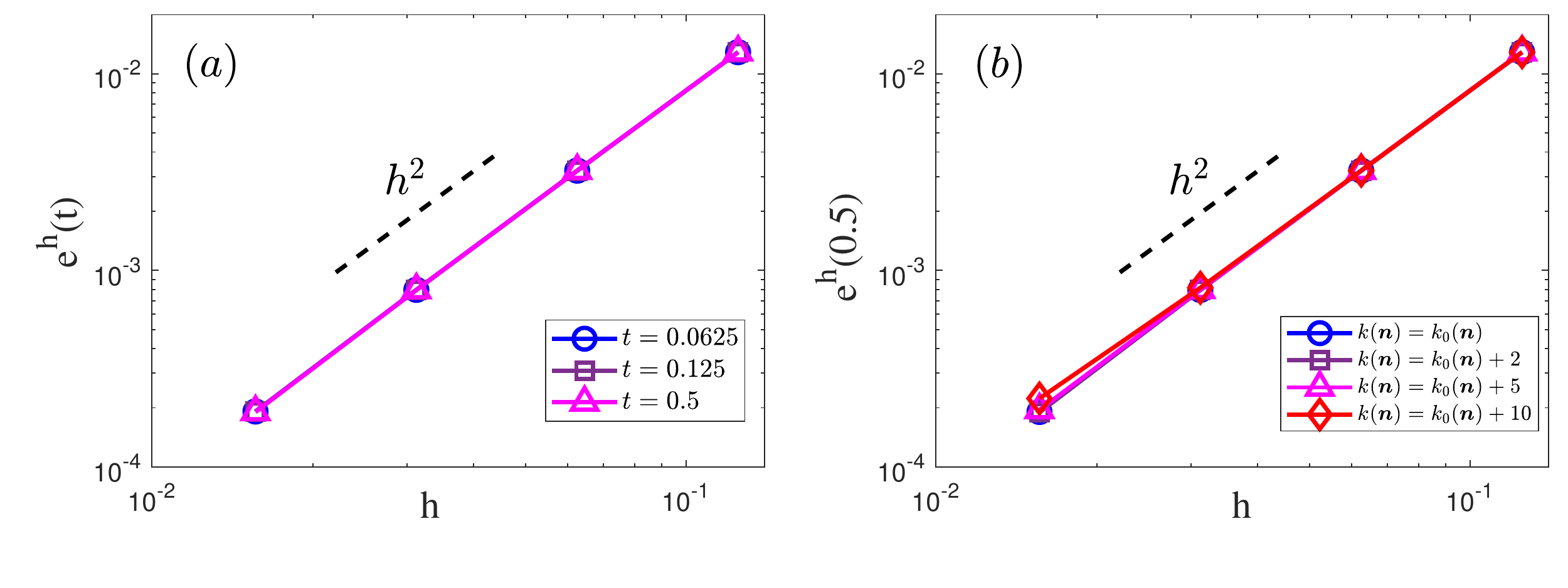}
\includegraphics[width=1\textwidth]{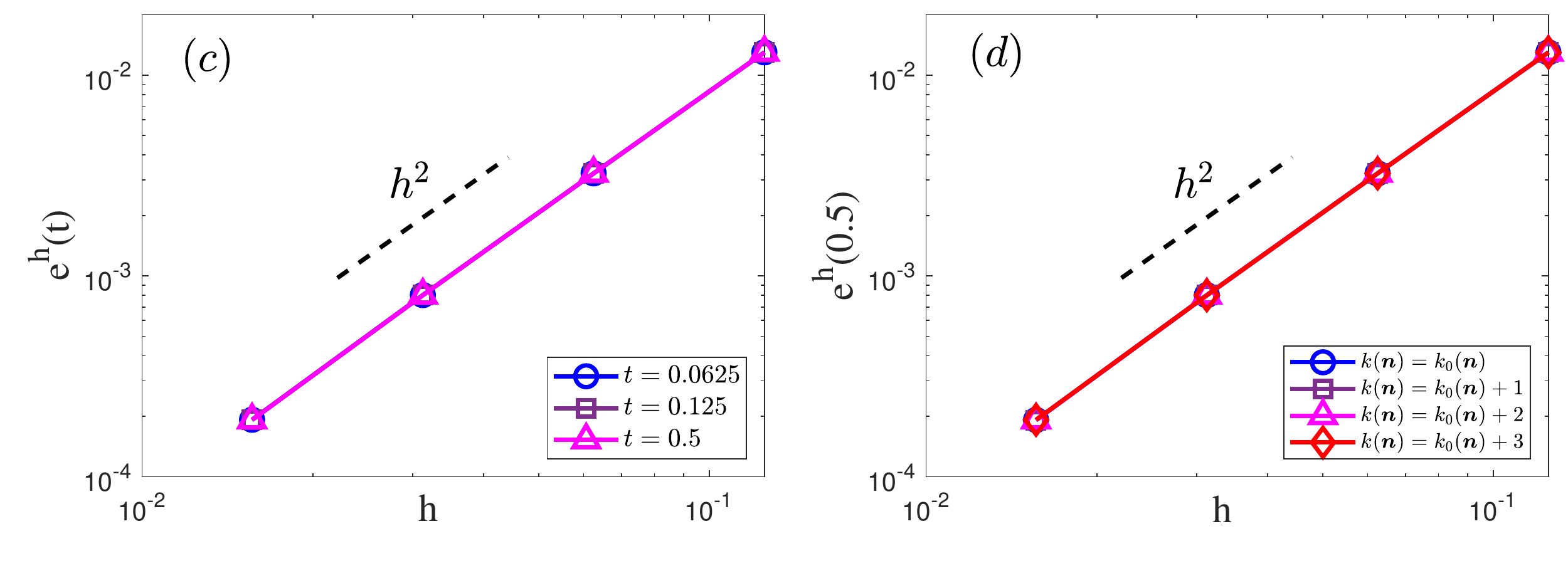}
\caption{Spatial convergence rates of the SP-PFEM \eqref{eqn:aniso SP-PFEM}
for: Case I at different times with $k(\boldsymbol{n})=k_0(\boldsymbol{n})$ in \eqref{k0nRm11} (a), and
 at time $t=0.5$ for different $k(\boldsymbol{n})$ (b); and Case II at different times with $k(\boldsymbol{n})=k_0(\boldsymbol{n})$ in Lemma \ref{lem: lr} (c), and
at time $t=0.5$ for different $k(\boldsymbol{n})$ (d).}
\label{fig:convergence rate SP-PFEM}
\end{figure}

Fig.~\ref{fig:convergence rate SP-PFEM} plots spatial convergence
rates of the SP-PFEM at different times under a fixed value
$k(\boldsymbol{n})$ in \eqref{def of the energy matrix Z_k(n)}
or different values of $k(\boldsymbol{n})$ under a fixed time $t=0.5$. Fig. \ref{fig:area and energy SP-PFEM} depicts time evolution of the normalized area loss and the normalized energy under different parameters. Fig.~\ref{fig:mesh ratio SP-PFEM} depicts
time evolution of the mesh ratio $R^h(t)$ under different mesh sizes $h$, time
steps $\tau$ and $k(\boldsymbol{n})$ for the above two cases.

\begin{figure}[htp!]
\centering
\includegraphics[width=0.49\textwidth]{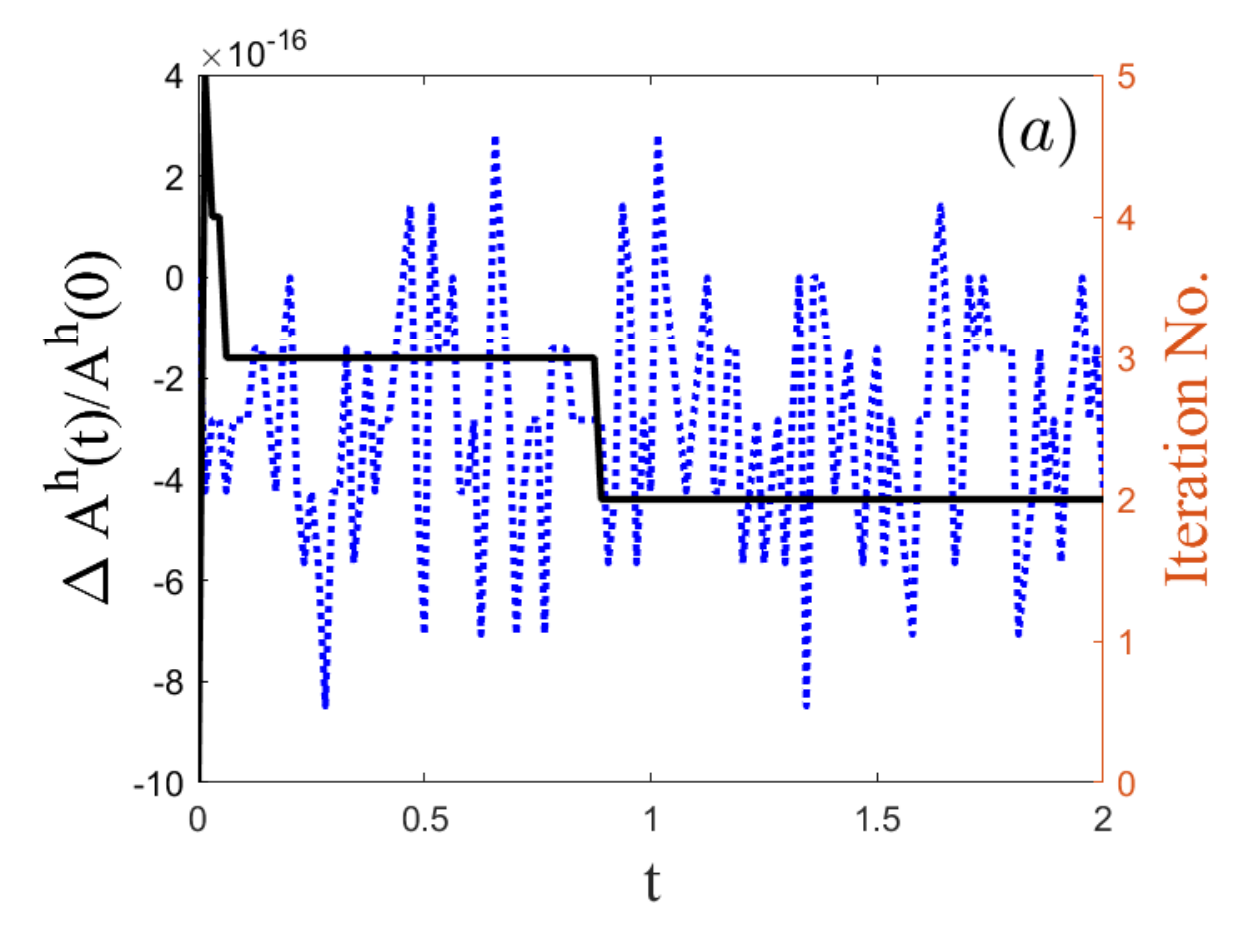}
\includegraphics[width=0.49\textwidth]{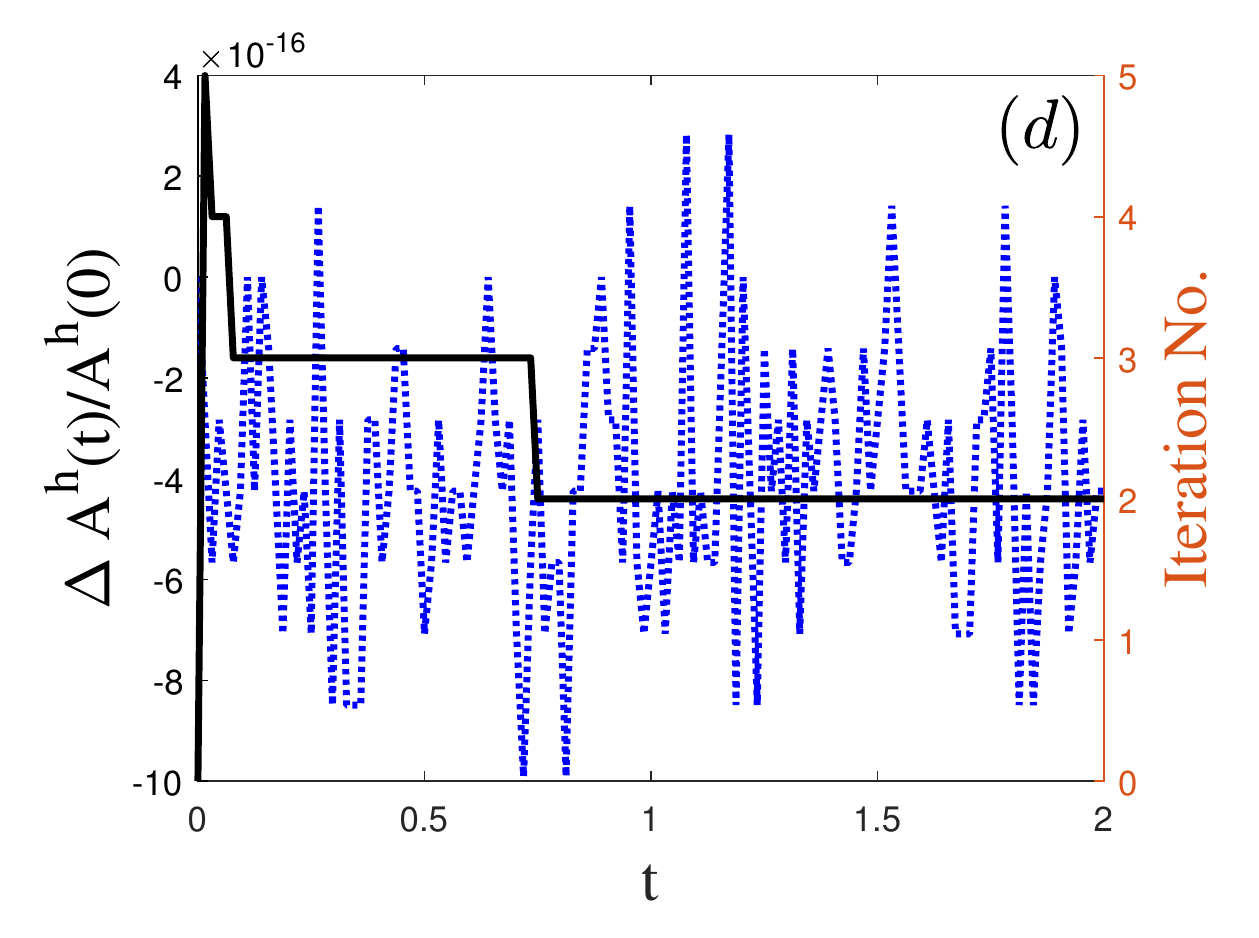}
\includegraphics[width=0.49\textwidth]{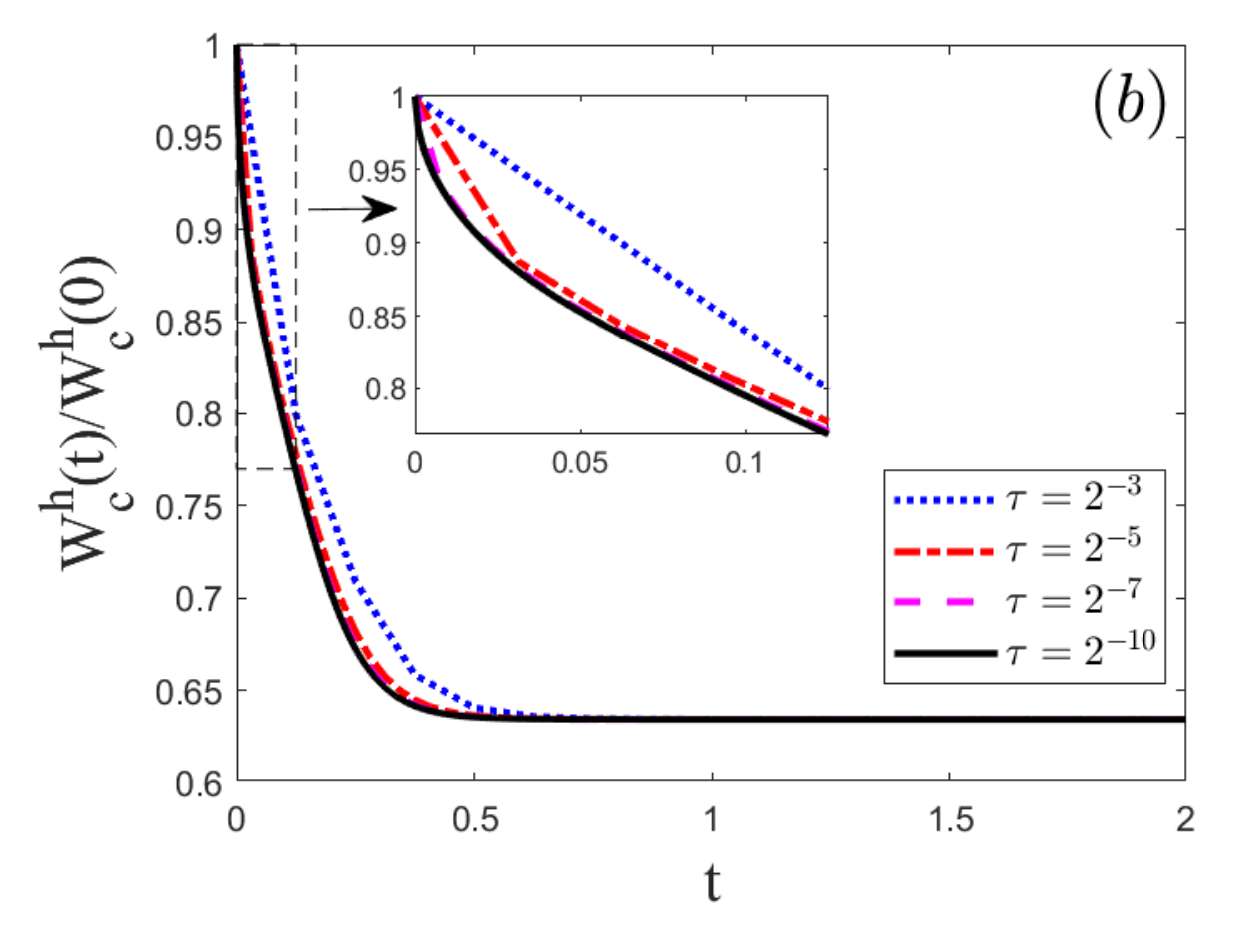}
\includegraphics[width=0.49\textwidth]{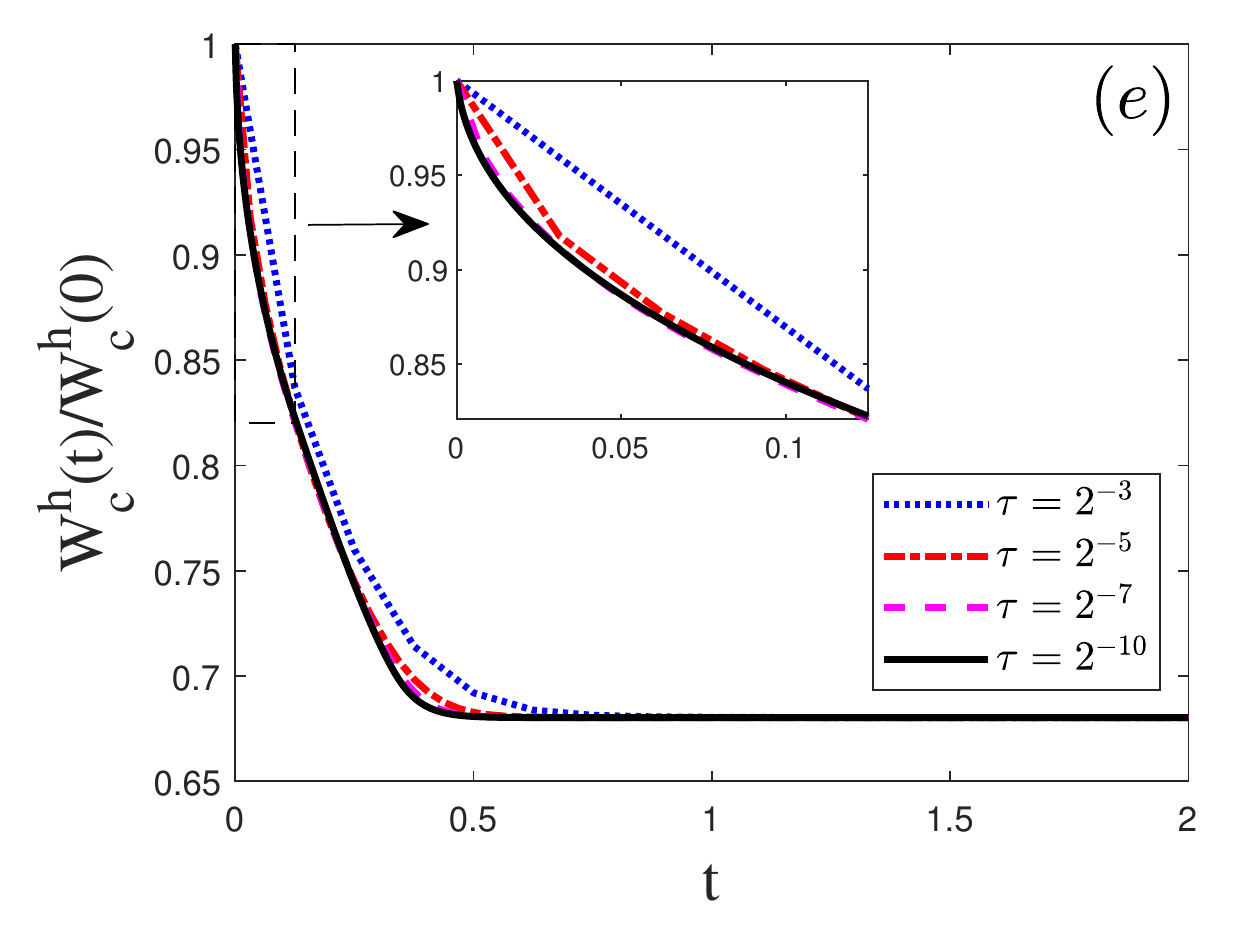}
\includegraphics[width=0.49\textwidth]{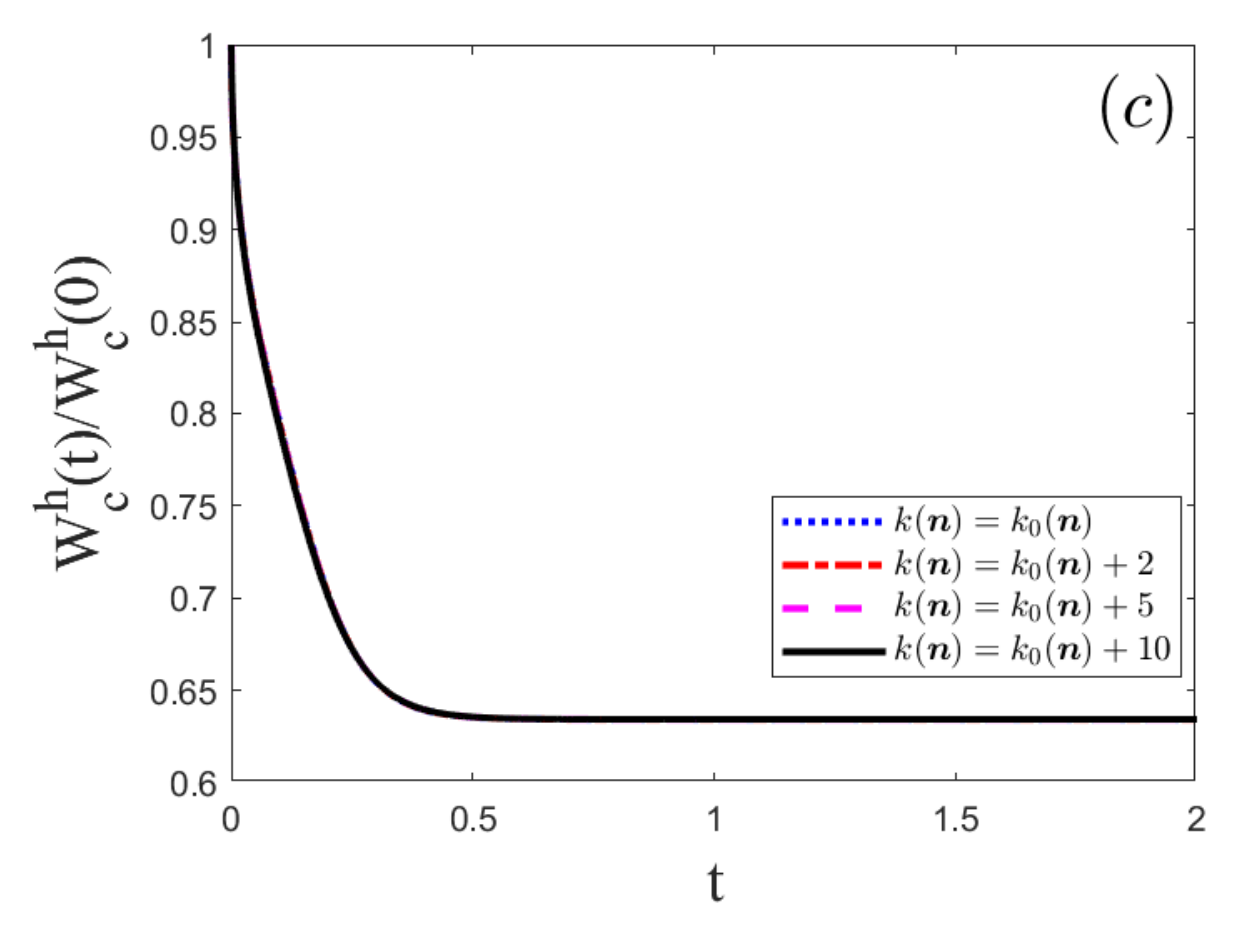}
\includegraphics[width=0.49\textwidth]{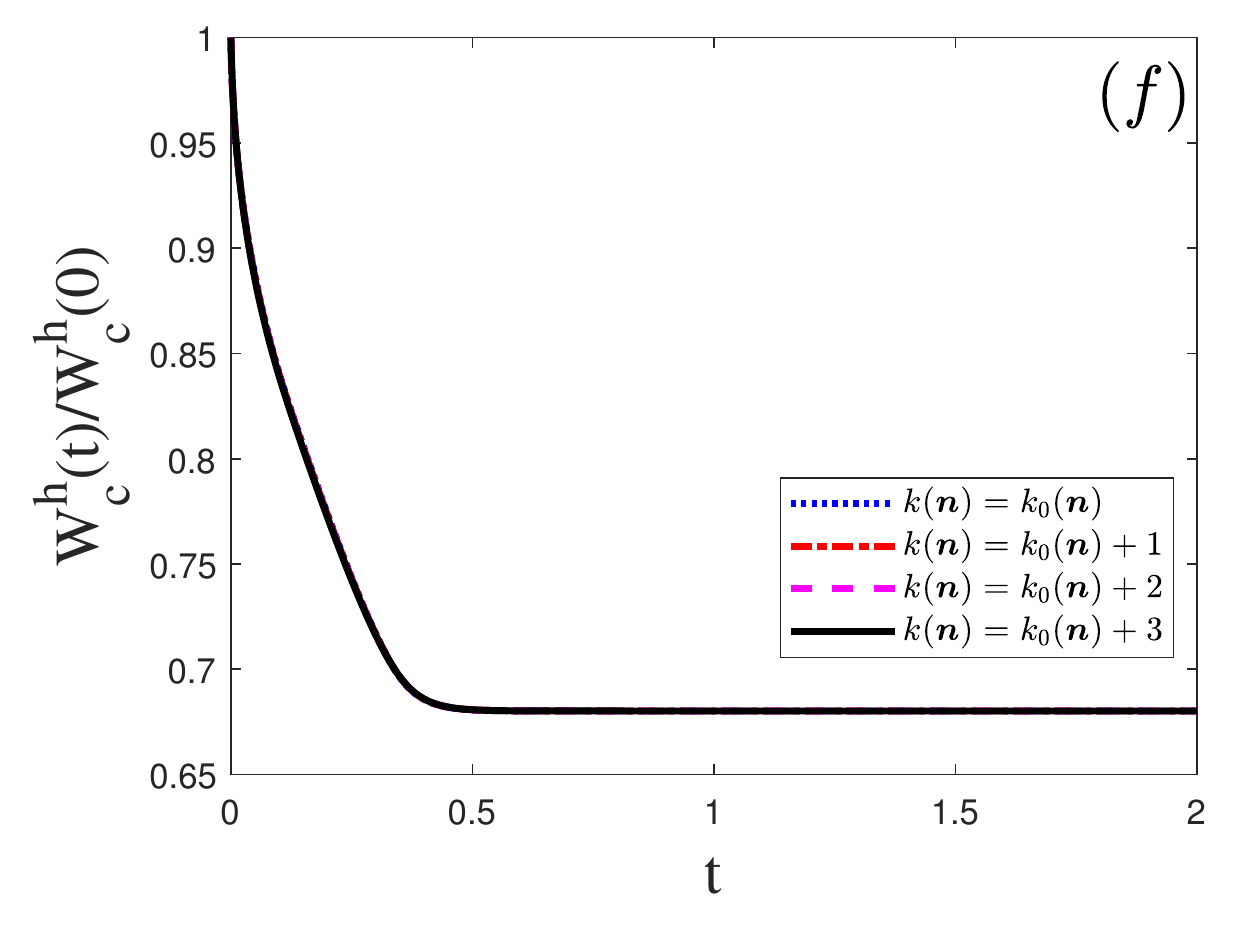}
\caption{Time evolution of the normalized area loss $\frac{\Delta A^h(t)}{A^h(0)}$ (first row, blue dashed line) and iteration number (first row, black line) and the normalized energy $\frac{W^h_c(t)}{W^h_c(0)}$ (second and third rows) for:
Case I with $k(\boldsymbol{n})=k_0(\boldsymbol{n})$ in \eqref{k0nRm11}
for $h=2^{-3}$ (a), and with $h=2^{-3}$ for different $\tau$ (b), and with $h=2^{-3}$ for different
$k(\boldsymbol{n})$ (c);  and Case II with $k(\boldsymbol{n})=k_0(\boldsymbol{n})$ in Lemma \ref{lem: lr} for $h=2^{-3}$ (d), and with $h=2^{-3}$  for different $\tau$ (e), and with $h=2^{-3}$ for different
$k(\boldsymbol{n})$ (f).}
\label{fig:area and energy SP-PFEM}
\end{figure}

\begin{figure}[htp!]
\centering
\includegraphics[width=0.95\textwidth]{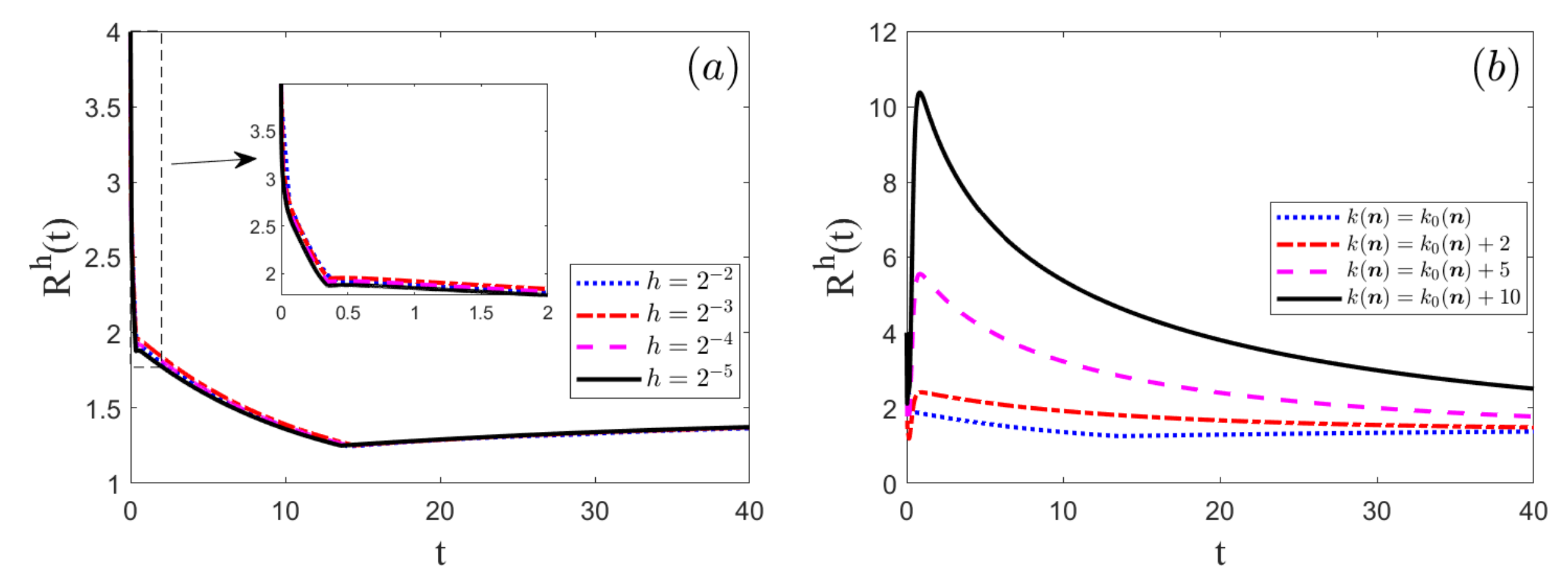}
\includegraphics[width=0.95\textwidth]{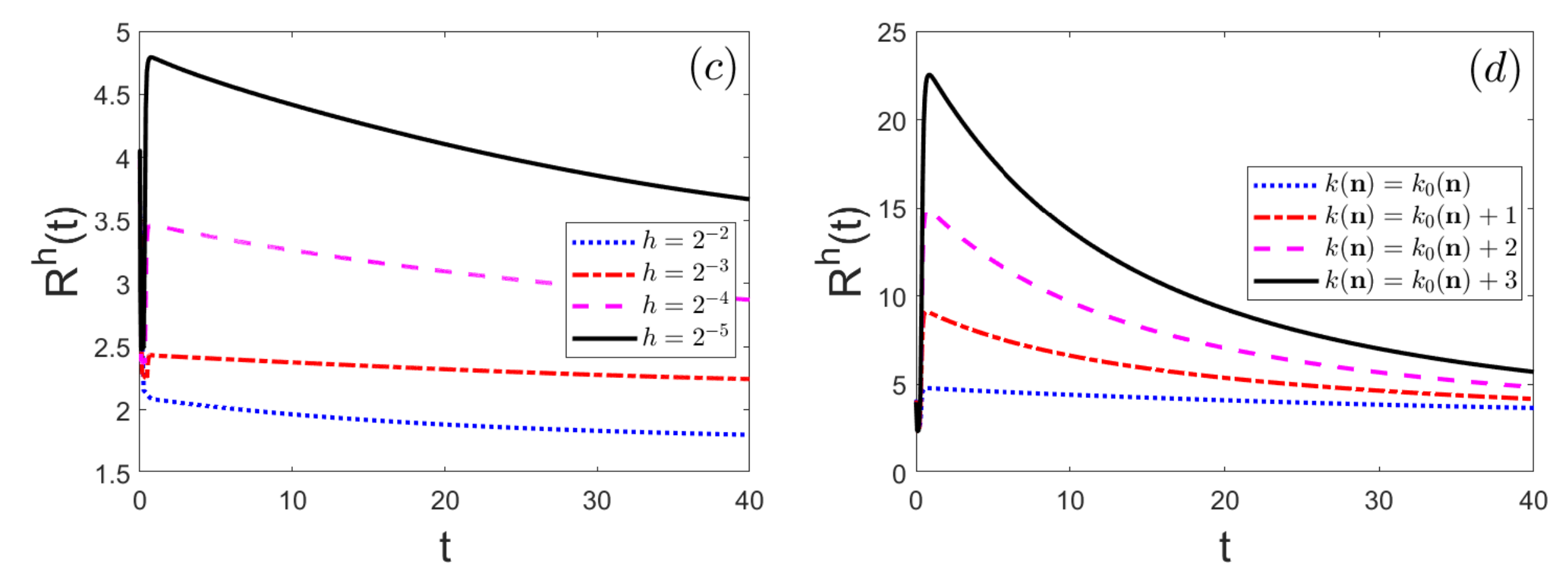}
\caption{Time evolution of the mesh ratio $R^h(t)$ for:
Case I with $k(\boldsymbol{n})=k_0(\boldsymbol{n})$ in \eqref{k0nRm11} for
different $h$ (a), and with
$h=2^{-5}$ for different $k(\boldsymbol{n})$ (b);  and Case II with $k(\boldsymbol{n})=k_0(\boldsymbol{n})$ in Lemma \ref{lem: lr} for
different $h$ (c), and with
$h=2^{-5}$ for different $k(\boldsymbol{n})$ (d).}
\label{fig:mesh ratio SP-PFEM}
\end{figure}

From Figs. \ref{fig:convergence rate SP-PFEM}--\ref{fig:mesh ratio SP-PFEM},
we can obtain the following results for the SP-PFEM
\eqref{eqn:aniso SP-PFEM} for simulating anisotropic surface diffusion of closed curves:

(i) The SP-PFEM is second-order accurate in space (cf. Fig.~\ref{fig:convergence rate SP-PFEM});

(ii) The area is conserved numerically up to the round-off error around $10^{-16}$ (cf. Fig.~\ref{fig:area and energy SP-PFEM}(a)\&(d));

(iii) The number of Newton iteration at each time step is around 2 to 4, thus it is very efficient (cf. Fig.~\ref{fig:area and energy SP-PFEM}(a)\&(d));

(iv) The SP-PFEM is unconditionally energy-stable when $k(\boldsymbol{n})$ satisfies the energy dissipation condition in Theorem 4.5
(cf. Fig.~\ref{fig:area and energy SP-PFEM}(b)-(c)\&(e)-(f));

(v) The mesh ratio $R^h(t=t_m)$ approaches a constant $C$ when $t\gg1$ for each case, which indicates asymptotic quasi-uniform mesh distribution,
no matter what kind of anisotropic surface energy is used as long as it is weakly anisotropic.

\subsection{Application for morphological evolutions}
Here, we use the SP-PFEM \eqref{eqn:aniso SP-PFEM} to simulate the morphological evolution under different anisotropic surface
energies, i.e.,
morphological evolutions of closed curves from a $4\times 1$ rectangle towards their corresponding equilibrium shapes.
Fig. \ref{fig:shape closed} depicts morphological evolutions for the four different weakly anisotropic surface energies including (a) the regularized $l^1$-norm metric \eqref{l1normase}  with $\varepsilon=0.1$ by taking $k(\boldsymbol{n})=k_1(\boldsymbol{n}):=\frac{1.01}{\sqrt{n_1^2+0.01n_2^2}}
+\frac{1.01}{\sqrt{0.01n_1^2+n_2^2}}$, (b) the $l^4$-norm metric
\eqref{lrnormase} with $r=4$ and  $k(\boldsymbol{n})=k_0(\boldsymbol{n})$
given in Lemma \ref{lem: lr}, (c) $2$-fold anisotropic energy \eqref{kfold}
with $m=2$, $\theta_0=\frac{\pi}{2}$ and $\beta=\frac{1}{3}$ and $k(\boldsymbol{n})=k_0(\boldsymbol{n})$ given in \eqref{k02fold1}, and (d) the Riemannian-like metric \eqref{ellipsoidal} with $L=1$ and $\boldsymbol{G}_1={\rm diag}(1,2):=\boldsymbol{G}$ and  $k(\boldsymbol{n})=k_0(\boldsymbol{n})$
given in \eqref{k0nRm11}.
Figs. \ref{fig:shape closed strongly anisotropy 1} and
\ref{fig:shape closed strongly anisotropy 2} show morphological evolutions and the normalized energy $\frac{W_c^h(t)}{W_c^h(0)}$ under the $2$-fold $\gamma(\boldsymbol{n})=1+\frac{3}{5}\cos(2\theta)$ and the $4$-fold $\gamma(\boldsymbol{n})=1+\frac{3}{10}\cos(4\theta)$, with $k(\boldsymbol{n})$ given in \eqref{k02fold1}, \eqref{k02fold2}, respectively, which are both strongly anisotropic surface energies. The Frank diagrams of the above anisotropic energies are all shown in Fig.~\ref{fig:Frank diagram}.

\begin{figure}[htp!]
\centering
\includegraphics[width=0.95\textwidth]{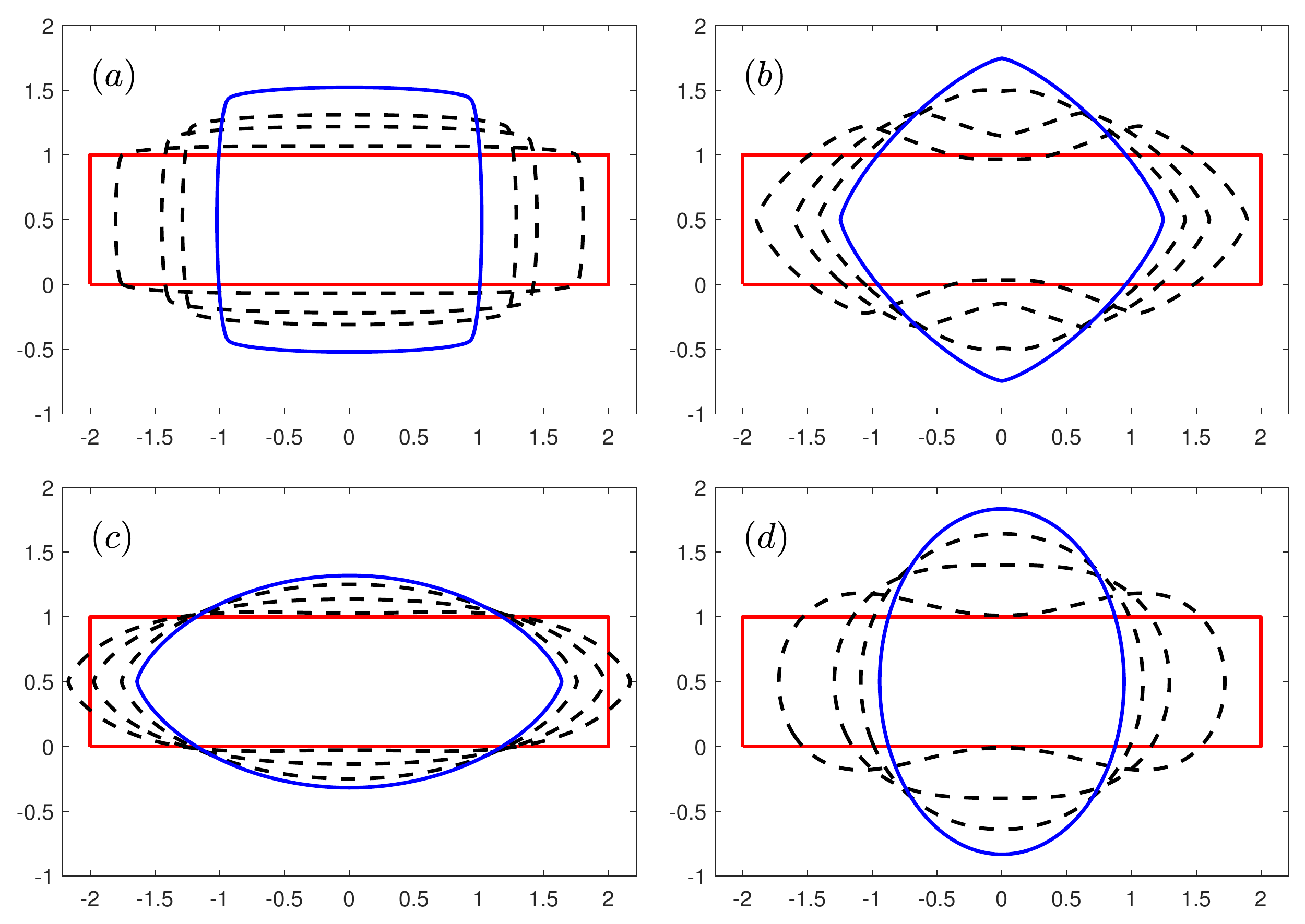}
\caption{Morphological evolutions of a close rectangular curve under anisotropic surface diffusion with different anisotropic surface energies:  (a) regularized $l^1$-norm metric $\gamma(\boldsymbol{n})=\sqrt{n_1^2+0.01n_2^2}+\sqrt{0.01n_1^2+n_2^2}$; (b) $l^4$-norm metric $\gamma(\boldsymbol{n})=\sqrt[4]{n_1^4+n_2^4}$; (c) $2$-fold $\gamma(\boldsymbol{n})=1+\frac{1}{3}\cos(2(\theta-\frac{\pi}{2}))$; and (d) Riemannian-like metric $\gamma(\boldsymbol{n})=\sqrt{\boldsymbol{n}^T \begin{pmatrix}
1&0 \\ 0&2 \end{pmatrix} \boldsymbol{n}}$,
where the parameters $h=2^{-6}, \tau=h^2$, and the red line, black dashed line and blue line represent the initial shape, intermediate shape and equilibrium shape, respectively.}
\label{fig:shape closed}
\end{figure}

\begin{figure}[htp!]
\centering
\includegraphics[width=0.90\textwidth]{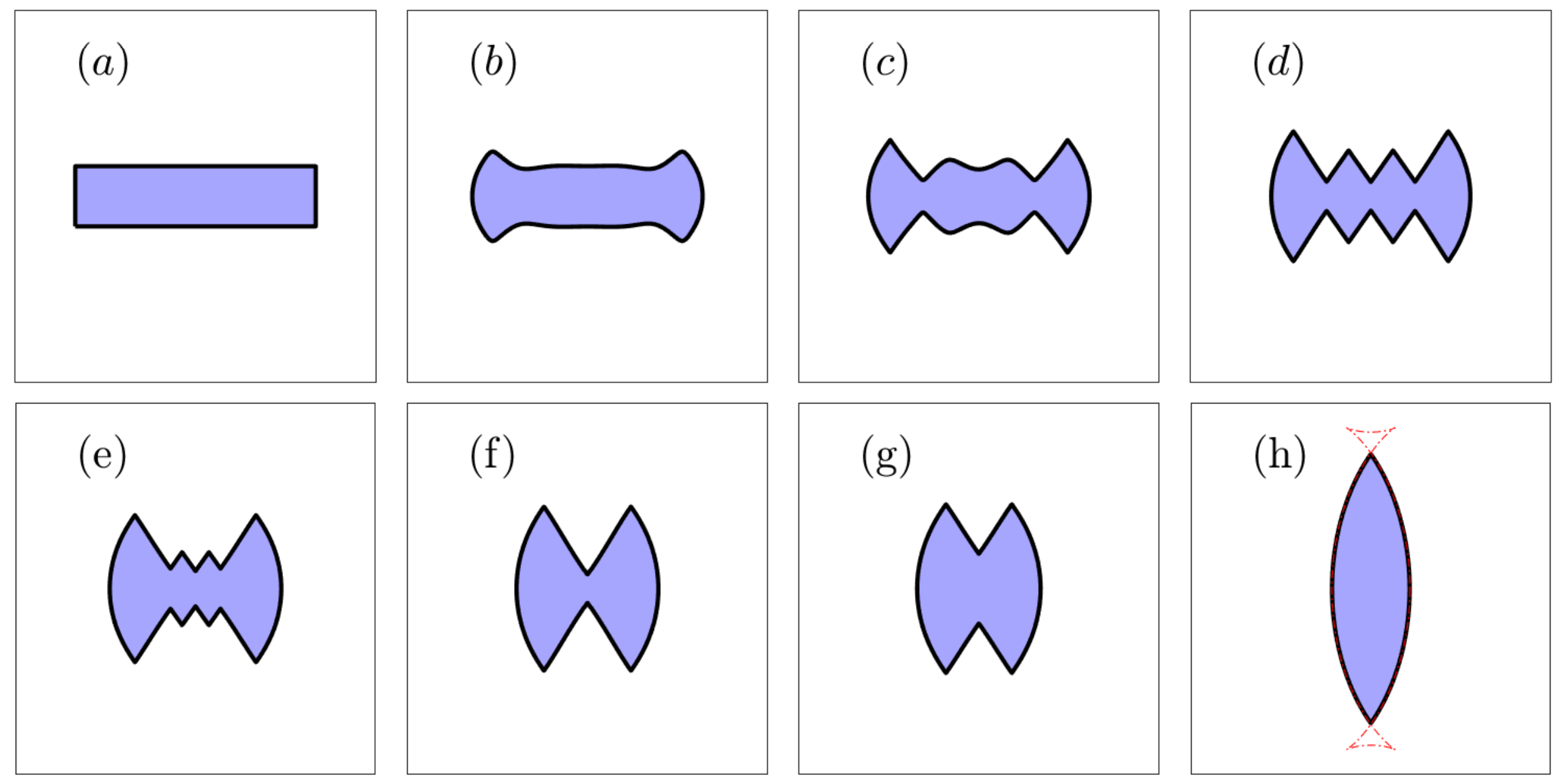}
\includegraphics[width=0.90\textwidth]{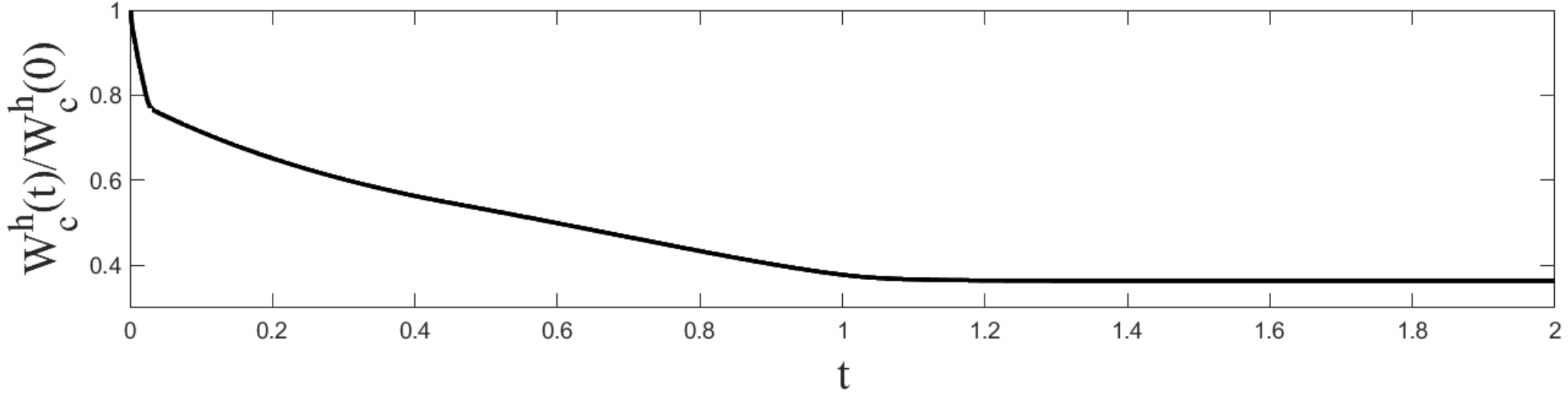}
\caption{Morphological evolutions and the normalized energy of a close rectangular curve under
anisotropic surface diffusion with the strongly $2$-fold anisotropic surface energy $\gamma(\boldsymbol{n})=1+\frac{3}{5}\cos(2\theta)$ towards its equilibrium at different times:  (a) $t=0$; (b) $t=10\tau$; (c) $t=20\tau$; (d) $t=100\tau$; (e) $t=250\tau$; (f) $t=500\tau$; (g) $t=700\tau$; and (h) $t=5000\tau$, where the other parameters are chosen as the same as Fig.~\ref{fig:shape closed}.}
\label{fig:shape closed strongly anisotropy 1}
\end{figure}
\begin{figure}[htp!]
\centering
\includegraphics[width=0.95\textwidth]{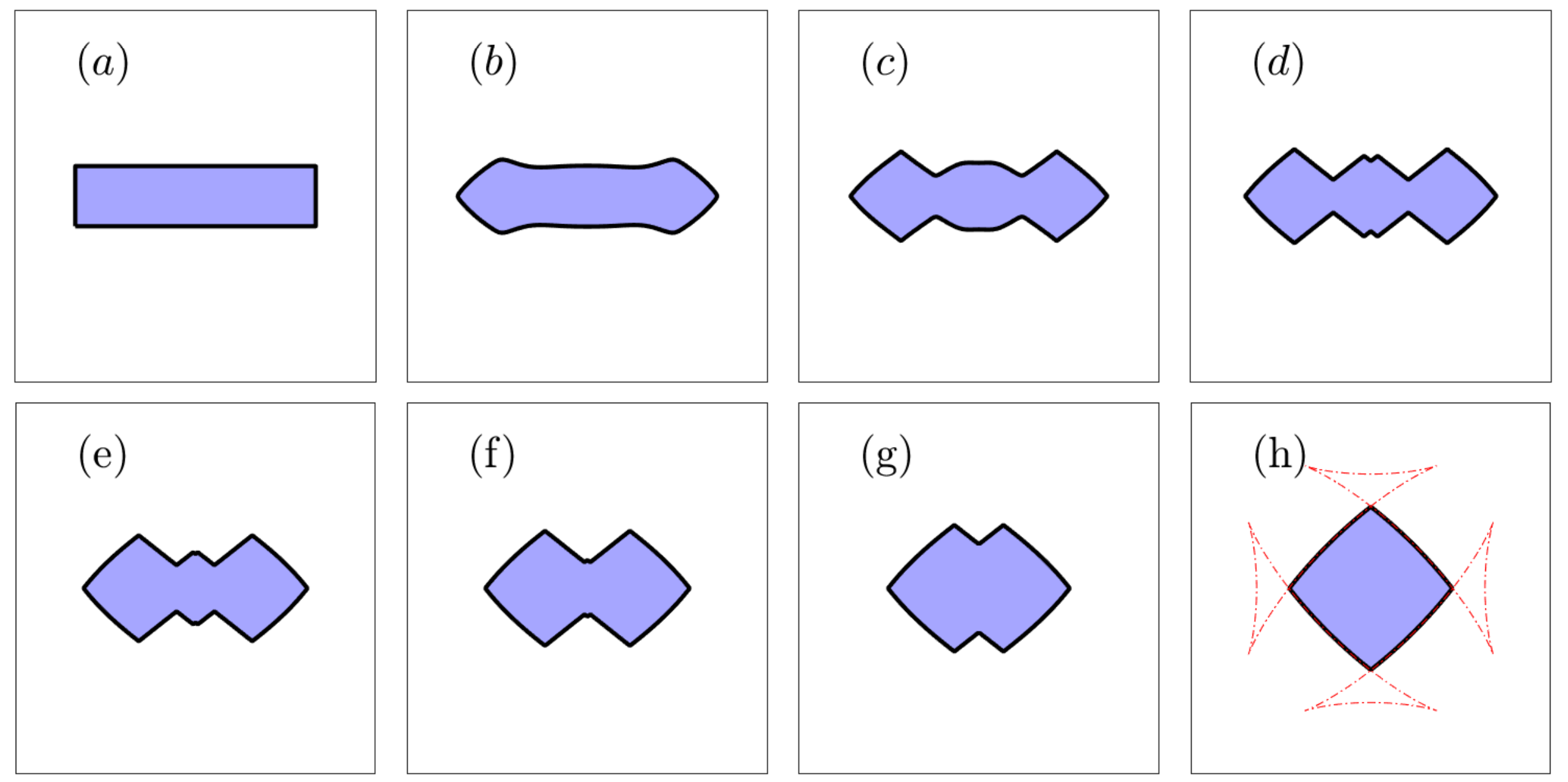}
\includegraphics[width=0.95\textwidth]{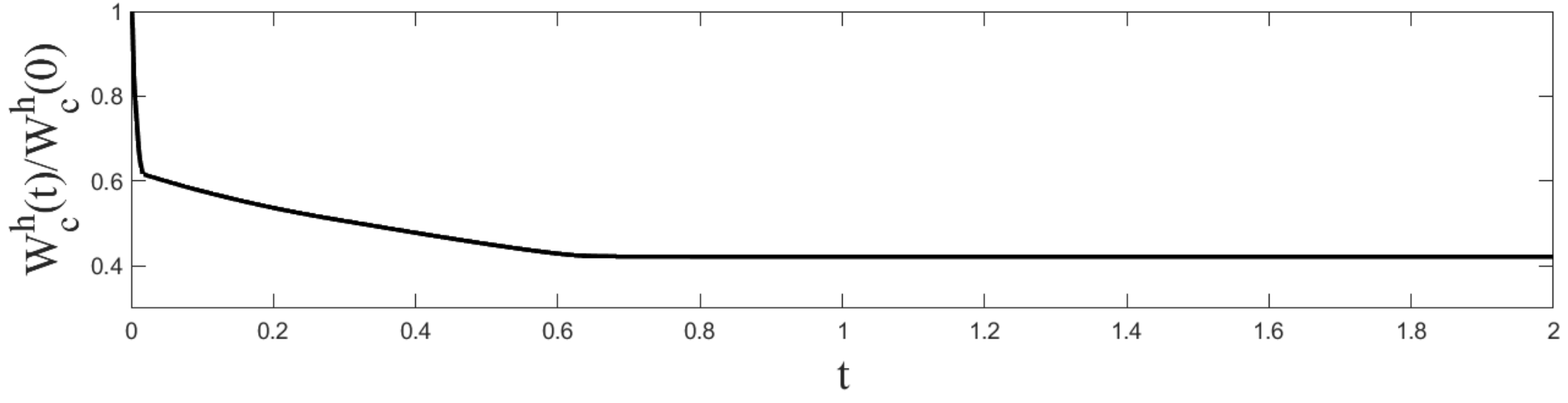}
\caption{Morphological evolutions and the normalized energy of a close rectangular curve under
anisotropic surface diffusion with the strongly $4$-fold anisotropic surface energy $\gamma(\boldsymbol{n})=1+\frac{3}{10}\cos(4\theta)$ towards its equilibrium at different times:  (a) $t=0$; (b) $t=5\tau$; (c) $t=10\tau$; (d) $t=20\tau$; (e) $t=160\tau$; (f) $t=300\tau$; (g) $t=500\tau$; and (h) $t=5000\tau$, where the parameters are chosen as $h=2^{-5}, \tau=h^2$, and the red dashed line in (h) is the Wulff envelope.}
\label{fig:shape closed strongly anisotropy 2}
\end{figure}

\begin{figure}[htp!]
\centering
\includegraphics[width=0.90\textwidth]{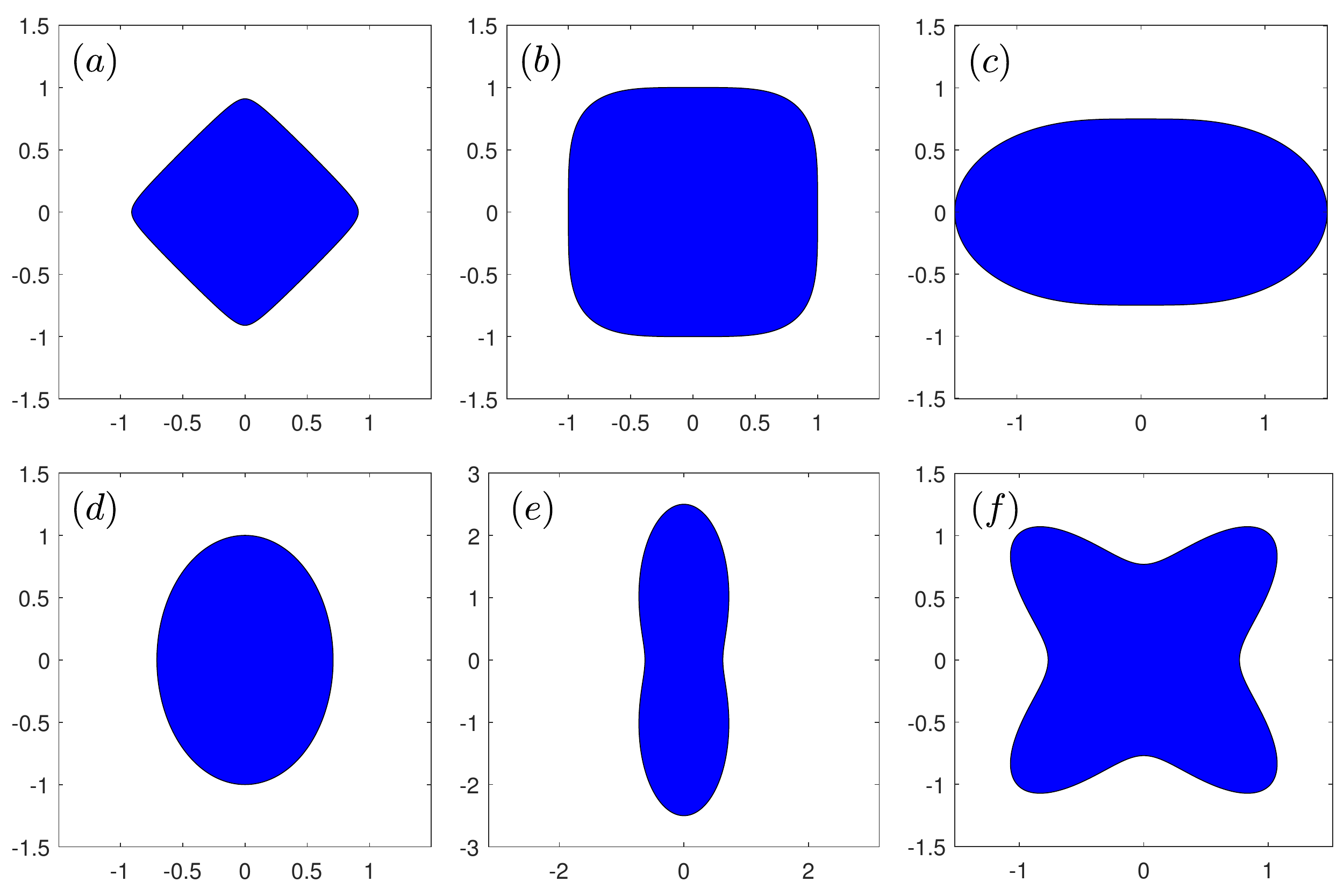}
\caption{\color{black}The Frank diagrams of the weakly anisotropic energies: (a)-(d) used in Fig.~\ref{fig:shape closed} respectively; and the strongly anisotropic energies: (e) $\gamma(\boldsymbol{n})=1+\frac{3}{5}\cos(2\theta)$ in Fig.~\ref{fig:shape closed strongly anisotropy 1}, and (f) $\gamma(\boldsymbol{n})=1+\frac{3}{10}\cos(4\theta)$ in Fig.~\ref{fig:shape closed strongly anisotropy 2}.}
\label{fig:Frank diagram}
\end{figure}

As shown in Fig. \ref{fig:shape closed}(a)--(b), if we choose the anisotropy as the regularized $l^1$-norm metric or the $l^4$-norm metric, the equilibrium shapes are almost ``faceting'' squares; for $2$-fold anisotropy (c.f. \ref{fig:shape closed}(c)),  the number of edges in its equilibrium shape is exactly two; and for the \color{black}Riemannian-like \color{black}  metric anisotropic energy (c.f. \ref{fig:shape closed}(d)),  the equilibrium shape is an ellipse. The numerical results are perfectly consistent with the theoretical predictions by the well-known Wulff construction~\cite{Wulff01,barrett2008numerical,Bao17}.
Because the anisotropic surface diffusion is area preserving during the evolution, we can easily obtain its theoretical equilibrium shape (or Wulff shape) by using the expression in \cite{Bao17,Jiang2016solid}. As shown in Figs. \ref{fig:shape closed strongly anisotropy 1}(h)\&\ref{fig:shape closed strongly anisotropy 2}(h), the numerical equilibrium shapes are again perfectly consistent with the theoretical
predictions by the Wulff construction in the strongly anisotropic cases. Meanwhile, we can clearly see that the normalized energy is monotonically decreasing during the evolution for the strongly anisotropic cases. Furthermore, we observe that the numerical equilibrium has several ``cusps'', which result from the self intersection of the Wulff envelope~\cite{Bao17}.

\section{Conclusions}

By utilizing a symmetric positive definite surface energy matrix $\boldsymbol{Z}_k(\boldsymbol{n})$ and a stabilizing function $k(\boldsymbol{n})$, we reformulated the anisotropic surface diffusion equation with any arbitrary anisotropic surface energy $\gamma(\boldsymbol{n})$ into a novel symmetrized form and derived a new variational formulation.
We discretized the variational problem in space by the PFEM. For temporal discretization, we proposed a
fully implicit SP-PFEM, which can rigorously preserve the total area up to machine precision. Then we rigorously proved that the proposed SP-PFEM is unconditionally energy-stable under a simple and mild condition \eqref{engstabgmp} on the anisotropic surface energy $\gamma(\boldsymbol{n})$. Finally, numerical results demonstrated that the SP-PFEM is second-order accurate in space, first-order in time, unconditionally energy-stable, and enjoys very good mesh quality during the evolution, and no mesh redistribution procedure is needed even for strongly anisotropic cases. Another important contribution is that the new scheme can also work well for the strongly anisotropic cases (shown in Figs. \ref{fig:shape closed strongly anisotropy 1}-\ref{fig:shape closed strongly anisotropy 2}). In the existing literature, a Willmore regularization energy term is often added into the model to deal with the strongly anisotropic cases~\cite{Spencer04,Li2009geo,Hausser05,Torabi09,Jiang2016solid}, but here we only use one unified scheme to tackle the two cases. In the future, we will further explore the high performance of the schemes, especially for the strongly anisotropic cases; and extend the new variational formulation to anisotropic surface diffusion of open/closed surfaces in three dimensions~\cite{Jiang,Zhao}.

\medskip

\begin{center}
{\textbf{Appendix A.}} The Cahn-Hoffman $\boldsymbol{\xi}$-vector for several anisotropic surface energies
\end{center}
\setcounter{equation}{0}
\renewcommand{\theequation}{A.\arabic{equation}}

\medskip
For the \color{black}Riemannian-like \color{black}  metric surface energy \eqref{ellipsoidal}, we have
\begin{align}\label{ellipsoidalaa1}
&\gamma(\boldsymbol{p})=\sum_{l=1}^L\sqrt{\boldsymbol{p}^T\boldsymbol{G}_l\boldsymbol{p}},
\qquad \forall\boldsymbol{p}\in \mathbb{R}^2_*:=\mathbb{R}^2\setminus \{\boldsymbol{0}\},\\
&\boldsymbol{\xi}=\boldsymbol{\xi}(\boldsymbol{n})=\sum_{l=1}^L
 \gamma_l(\boldsymbol{n})^{-1}\,\boldsymbol{G}_l\,\boldsymbol{n}, \qquad
 \lambda({\boldsymbol{n}})=\sum_{l=1}^L\gamma_l(\boldsymbol{n})^{-3}\,\text{det}(\boldsymbol{G}_l)>0,
\end{align}
which indicates the \color{black}Riemannian-like \color{black}  metric anisotropy is always weakly anisotropic.

For the $l^r$-norm ($r\geq 2$) metric anisotropic surface energy \eqref{lrnormase}, we have
\begin{align}
&\gamma(\boldsymbol{p})=\left\|\boldsymbol{p}\right\|_{l^r}=
 \left(|p_1|^r+|p_2|^r\right)^{\frac{1}{r}}, \qquad \forall\boldsymbol{p}=(p_1,p_2)^T\in \mathbb{R}^2_*,\\
&\boldsymbol{\xi}=\boldsymbol{\xi}(\boldsymbol{n})=
 \gamma(\boldsymbol{n})^{1-r}
  \begin{pmatrix}|n_1|^{r-2}n_1\\ |n_2|^{r-2}n_2\end{pmatrix}, \quad \lambda({\boldsymbol{n}})=(r-1)\frac{|n_1n_2|^{r-2}}
{\gamma(\boldsymbol{n})^{2r-1}},\quad  \forall\boldsymbol{n}\in \mathbb{S}^1,\label{eqxx}
\end{align}
which indicates the $l^r$-norm ($r\geq 2$) metric anisotropy is always weakly anisotropic.

For the $m$-fold anisotropic surface energy \eqref{kfold} with $\theta_0=0$, we have
\begin{equation}\label{kfoldaa1}
\gamma(\boldsymbol{p})=\left(p_1^2+p_2^2\right)^{\frac{1}{2}}(1+\beta \cos(m\theta)), \forall \boldsymbol{p}=(p_1,p_2)^T=|p|(-\sin\theta,\cos\theta)^T\in \mathbb{R}^2_*.
\end{equation}
Plugging \eqref{kfoldaa1} into \eqref{The xi-vector}, we get
\begin{align}
&\boldsymbol{\xi}=\boldsymbol{\xi}(\boldsymbol{n})=\boldsymbol{n}+\beta \cos(m\theta) \boldsymbol{n}+\beta m\sin(m\theta)\boldsymbol{n}^{\perp},
\quad \forall\boldsymbol{n}=(-\sin\theta,\cos\theta)\in \mathbb{S}^1,\\
&\lambda({\boldsymbol{n}})=1-\beta(m^2-1)\cos(m\theta),
\end{align}
which indicates that it is weakly anisotropic if $0<\beta\le \frac{1}{m^2-1}$; otherwise, it is strongly anisotropic.

For all the above $\gamma(\boldsymbol{n})$, their Hessian matrices are of the form:
\begin{equation}\label{Hessian for gamma}
\bf{H}_\gamma(\boldsymbol{n})=\lambda({\boldsymbol{n}})
\begin{pmatrix}n_2^2&-n_1n_2\\-n_1n_2&n_1^2\end{pmatrix}, \qquad \forall\boldsymbol{n}=(n_1,n_2)^T\in \mathbb{S}^1.
\end{equation}

\begin{center}
{\textbf{Appendix B.}} Proof of Lemma \ref{lem: Riemannian} for the Riemannian-like metric anisotropy
\end{center}
\setcounter{equation}{0}
\renewcommand{\theequation}{B.\arabic{equation}}

\medskip
\begin{proof}
First we consider the case $L=1$ and assume $\boldsymbol{G}_1=\left(\begin{smallmatrix}a&b\\b&c\end{smallmatrix}\right)
:=\boldsymbol{G}$ with $a>0$ and $ac-b^2>0$, then the minimal stabilizing function $k_0(\boldsymbol{n})$ becomes
\begin{equation}
k_0(\boldsymbol{n})=\gamma(\boldsymbol{n})^{-1}\text{Tr}(\boldsymbol{G})
=\gamma(\boldsymbol{n})^{-1}(a+c):=k_1(\boldsymbol{n}).
\end{equation}
By using $\boldsymbol{\xi}$ in \eqref{ellipsoidalaa1}, the corresponding surface energy matrix with respect to $k_1(\boldsymbol{n})$ can be given as
\begin{align}\label{AP: auxi 1}
\boldsymbol{Z}_{k_1}(\boldsymbol{n})&=
\gamma(\boldsymbol{n})I_2-\boldsymbol{\xi}\boldsymbol{n}^T
-\boldsymbol{n}\boldsymbol{\xi}^T+k_1(\boldsymbol{n})
\boldsymbol{n}\boldsymbol{n}^T\nonumber\\ &=\gamma(\boldsymbol{n})I_2-\gamma(\boldsymbol{n})^{-1}
\boldsymbol{G}\boldsymbol{n}\boldsymbol{n}^T-\gamma(\boldsymbol{n})^{-1}
\boldsymbol{n}\boldsymbol{n}^T\boldsymbol{G}+\gamma(\boldsymbol{n})^{-1}
(a+c)\boldsymbol{n}\boldsymbol{n}^T\nonumber\\ &=\gamma(\boldsymbol{n})^{-1}\begin{pmatrix}\gamma(\boldsymbol{n})^2
-2(an_1^2+bn_1n_2)+(a+c)n_1^2&*\\-(an_1n_2+bn_2^2)-(bn_1^2+cn_1n_2)
+(a+c)n_1n_2&*
\end{pmatrix}\nonumber\\
&=\gamma(\boldsymbol{n})^{-1}\begin{pmatrix}c&-b\\-b&a
\end{pmatrix}=\gamma(\boldsymbol{n})^{-1}\boldsymbol{J}^T
\boldsymbol{G}\boldsymbol{J},
\end{align}
where the $*$ means the entry can be deduced in the same way. By direct computations, we obtain
\begin{align*}
\gamma(\boldsymbol{n})\, (\hat{\boldsymbol{n}}^{\perp})^T\boldsymbol{Z}_{k_1}(\boldsymbol{n})
\hat{\boldsymbol{n}}^{\perp}-\gamma(\hat{
\boldsymbol{n}})^2&=(\hat{\boldsymbol{n}}^{\perp})^T
\boldsymbol{J}^T\boldsymbol{G}\boldsymbol{J}
\hat{\boldsymbol{n}}^{\perp}-\gamma(\hat{
\boldsymbol{n}})^2\\
&=\hat{\boldsymbol{n}}^T\boldsymbol{G}\hat{\boldsymbol{n}}-
\gamma(\boldsymbol{\hat{n}})^2=0.
\end{align*}
From the alternative definition of $k_0(\boldsymbol{n})$ in \eqref{alternative definition of k_0},
we obtain $k_0(\boldsymbol{n})\leq k_1(\boldsymbol{n})$ 

On the other hand, we take $\hat{\boldsymbol{n}}\to\boldsymbol{n}$ in $F(\boldsymbol{n}, \hat{\boldsymbol{n}})$. By applying \eqref{limFnnh} and the Hessian matrix derived in \eqref{ellipsoidalaa1} and \eqref{Hessian for gamma}, we then have
\begin{align*}
&(\boldsymbol{n}^\perp)^T\bf{H}_\gamma \boldsymbol{n}^{\perp}+\frac{|\boldsymbol{\xi}|^2}{\gamma(\boldsymbol{n})}\\ &=\gamma(\boldsymbol{n})^{-3}\left((ac-b^2)(n_2^4+2n_1^2n_2^2+n_1^4)
+(an_1+bn_2)^2+(bn_1+cn_2)^2\right)\\
&=\gamma(\boldsymbol{n})^{-3}(ac+a^2n_1^2+2abn_1n_2+2acn_1n_2+c^2n_2^2)\\
&=\gamma(\boldsymbol{n})^{-3}(an_1^2+2bn_1n_2+cn_2^2)(a+c)\\
&=\gamma(\boldsymbol{n})^{-1}(a+c)=k_0(\boldsymbol{n}),
\end{align*}
which means $k_0(\boldsymbol{n})\geq \gamma(\boldsymbol{n})^{-1}(a+c)$ by \eqref{existence of k_0(n)}, hence $k_0(\boldsymbol{n})=\gamma(\boldsymbol{n})^{-1}
\text{Tr}(\boldsymbol{G})=k_1(\boldsymbol{n})$.

For $L> 1$, Lemma \ref{lemma: sub-linear} yields $k_1(\boldsymbol{n})=\sum\limits_{l=1}^L \gamma_l(\boldsymbol{n})^{-1}\text{Tr}(\boldsymbol{G}_l)\geq k_0(\boldsymbol{n})$, and $\boldsymbol{Z}_{k_1}(\boldsymbol{n})$ can be derived by the same argument in \eqref{AP: auxi 1}.
\end{proof}

\color{black}
\begin{center}
{\textbf{Appendix C.}} Proof of Lemma \ref{lem: lr} for the $l^r$-norm  metric anisotropy
\end{center}
\setcounter{equation}{0}
\renewcommand{\theequation}{C.\arabic{equation}}

\medskip
\begin{proof}
(i) When $r=4$, a direct computation shows
\begin{align*}
&\gamma(\boldsymbol{n})\, (\hat{\boldsymbol{n}}^{\perp})^T\boldsymbol{Z}_{k_0}(\boldsymbol{n})
\hat{\boldsymbol{n}}^{\perp}-\gamma(\hat{
\boldsymbol{n}})^2\\
&=\frac{1-2n_1n_2\hat{n}_1\hat{n}_2}{\sqrt{n_1^4+n_2^4}}
-\sqrt{\hat{n}_1^4+\hat{n}_2^4}\\
&=\frac{(n_1^2+n_2^2)^2+(\hat{n}_1^2+\hat{n}_2^2)^2-4n_1n_2\hat{n}_1\hat{n}_2-2\sqrt{\hat{n}_1^4+\hat{n}_2^4}\sqrt{n_1^4+n_2^4}}{2\sqrt{n_1^4+n_2^4}}\\
&\geq\frac{(n_1^2+n_2^2)^2+(\hat{n}_1^2+\hat{n}_2^2)^2-4n_1n_2\hat{n}_1\hat{n}_2-n_1^4-n_2^4
-\hat{n}_1^4-\hat{n}_2^4}{2\sqrt{n_1^4+n_2^4}}\\
&=\frac{(n_1n_2-\hat{n}_1\hat{n}_2)^2}{\sqrt{n_1^4+n_2^4}}\geq 0,\qquad \forall
\boldsymbol{n}, \hat{\boldsymbol{n}}\in \mathbb{S}^1.
\end{align*}
By Theorem \ref{energy dissipation condition and k_0}, we get $k_0(\boldsymbol{n})\leq 2\gamma(\boldsymbol{n})^{-3}$.
On the other hand, by taking $\hat{\boldsymbol{n}}=(n_2, n_1)^T\in \mathbb{S}^1$ in \eqref{Fnnhat} and the $\boldsymbol{\xi}$ vector given in \eqref{eqxx}, we obtain
\begin{align*}
  F(\boldsymbol{n}, \hat{\boldsymbol{n}})&=\frac{2\gamma(\boldsymbol{n})(\gamma(\boldsymbol{n})^{-3}(n_1^3,n_2^3)\cdot(-n_1, n_2))(-n_1^2+n_2^2)}{\gamma(\boldsymbol{n})(-n_1^2+n_2^2)^2}\\
  &=2\gamma(\boldsymbol{n})^{-3}\frac{(-n_1^2+n_2^2)^2(n_2^2+n_1^2)}{(-n_1^2+n_2^2)^2}=2\gamma(\boldsymbol{n})^{-3}.
\end{align*}

By \eqref{existence of k_0(n)}, we know that $k_0(\boldsymbol{n})\geq 2\gamma(\boldsymbol{n})^{-3}$, hence $k_0(\boldsymbol{n})=2\gamma(\boldsymbol{n})^{-3}$.

(ii) When $r=6$, a direct computation shows
\begin{align*}
&\gamma(\boldsymbol{n})\, (\hat{\boldsymbol{n}}^{\perp})^TZ_{k_0}(\boldsymbol{n})
\hat{\boldsymbol{n}}^{\perp}-\gamma(\hat{
\boldsymbol{n}})^2\\
&=\gamma(\boldsymbol{n})^{-4}\,(1-n_1^2n_2^2-2n_1n_2\hat{n}_1\hat{n}_2)-\sqrt[3]{\hat{n}_1^6+\hat{n}_2^6}\\
&=\gamma(\boldsymbol{n})^{-4}\left(1-n_1^2n_2^2-2n_1n_2\hat{n}_1\hat{n}_2-\sqrt[3]{(n_1^6+n_2^6)^2(\hat{n}_1^6+\hat{n}_2^6)}\right)\\
&\geq\gamma(\boldsymbol{n})^{-4}\left(\frac{2(n_1^2+n_2^2)^3+(\hat{n}_1^2+\hat{n}_2^2)^3}{3}-n_1^2n_2^2-2n_1n_2\hat{n}_1\hat{n}_2-\frac{2(n_1^6+n_2^6)+(\hat{n}_1^6+\hat{n}_2^6)}{3}\right)\\
&=\gamma(\boldsymbol{n})^{-4}\left(\frac{6n_1^4n_2^2+6n_1^2n_2^4+3\hat{n}_1^4\hat{n}_2^2+3\hat{n}_1^2\hat{n}_2^4}{3}-n_1^2n_2^2-2n_1n_2\hat{n}_1\hat{n}_2\right)\\
&=\gamma(\boldsymbol{n})^{-4}\left(2n_1^2n_2^2(n_1^2+n_2^2)+\hat{n}_1^2\hat{n}_2^2(\hat{n}_1^2+\hat{n}_2^2)-n_1^2n_2^2-2n_1n_2\hat{n}_1\hat{n}_2\right)\\
&=\gamma(\boldsymbol{n})^{-4}(n_1n_2-\hat{n}_1\hat{n}_2)^2\geq 0,\qquad \forall
\boldsymbol{n}, \hat{\boldsymbol{n}}\in \mathbb{S}^1.
\end{align*}
By Theorem \ref{energy dissipation condition and k_0}, we get $k_0(\boldsymbol{n})\leq 2\gamma(\boldsymbol{n})^{-5}(n_2^4+n_2^2n_1^2+n_1^4)$.
On the other hand, by taking $\hat{\boldsymbol{n}}=(n_2, n_1)^T\in \mathbb{S}^1$ in \eqref{Fnnhat} and the $\boldsymbol{\xi}$ vector given in \eqref{eqxx}, we obtain
\begin{align*}
  F(\boldsymbol{n}, \hat{\boldsymbol{n}})&=\frac{2\gamma(\boldsymbol{n})(\gamma(\boldsymbol{n})^{-5}(n_1^5,n_2^5)\cdot(-n_1, n_2))(-n_1^2+n_2^2)}{\gamma(\boldsymbol{n})(-n_1^2+n_2^2)^2}\\
  &=2\gamma(\boldsymbol{n})^{-5}\frac{(-n_1^2+n_2^2)^2(n_2^4+n_2^2n_1^2+n_1^4)}{(-n_1^2+n_2^2)^2}=2\gamma(\boldsymbol{n})^{-5}(n_2^4+n_2^2n_1^2+n_1^4).
\end{align*}

By \eqref{existence of k_0(n)}, we know that $k_0(\boldsymbol{n})\geq 2\gamma(\boldsymbol{n})^{-5}(n_2^4+n_2^2n_1^2+n_1^4)$, hence $k_0(\boldsymbol{n})=2\gamma(\boldsymbol{n})^{-5}(n_2^4+n_2^2n_1^2+n_1^4)$.
\end{proof}

\medskip

\begin{center}
{\textbf{Appendix D.}} Proof of Lemma \ref{lem: Riemannian} for the $2/4$-fold  anisotropy
\end{center}
\setcounter{equation}{0}
\renewcommand{\theequation}{D.\arabic{equation}}

\medskip

\begin{proof}
For the $m$-fold anisotropy $\hat{\gamma}(\theta)=\gamma(\boldsymbol{n})=1+\beta \cos m\theta$, we know that $\hat{\gamma}'(\theta)=-m\beta \sin m\theta$. The $\tilde{F}^\theta(\hat{\theta})$ given in \eqref{tildeF} is
\begin{align}\label{tildeFD}
  \tilde{F}^\theta(\hat{\theta})=&2(1+\beta\cos m\theta)\nonumber\\
  &+\frac{(1+\beta\cos m\hat{\theta})^2-(1+\beta\cos m\theta)^2}{(1+\beta \cos m\theta)\sin^2(\hat{\theta}-\theta)}+
  \frac{m\beta\sin m\theta\;\sin (2(\hat{\theta}-\theta))}{\sin^2(\hat{\theta}-\theta)}.
\end{align}

(i) For the $2$-fold anisotropy, i.e. $m=2$, 
by applying Mathematica to \eqref{tildeFD}, we get
\begin{equation}\label{F_temp}
  \tilde{F}^\theta(\hat{\theta})=4-2(1+\beta\cos 2\theta)+\frac{2\beta^2(1-\cos 2(\hat{\theta}+\theta))}{1+\beta\cos 2\theta}.
\end{equation}
Thus by \eqref{opti} in Remark \ref{rmk 4.1}, we obtain
\begin{equation}\label{D3}
  k_0(\boldsymbol{n})=\max_{\hat{\theta}\in [\theta-\frac{\pi}{2}, \theta+\frac{\pi}{2}]}\tilde{F}^\theta(\hat{\theta})\leq 4-2\gamma(\boldsymbol{n})+\frac{4\beta^2}{\gamma(\boldsymbol{n})}.
\end{equation}
On the other hand, by taking $\hat{\theta}=\frac{\pi}{2}-\theta$ in \eqref{F_temp}, we obtain
\begin{equation}\label{D4}
  \tilde{F}^\theta(\frac{\pi}{2}-\theta)=4-2\gamma(\boldsymbol{n})+\frac{4\beta^2}{\gamma(\boldsymbol{n})}\leq k_0(\boldsymbol{n}).
\end{equation}
By combining \eqref{D3} and \eqref{D4}, we know $k_0(\boldsymbol{n})=4-2\gamma(\boldsymbol{n})+\frac{4\beta^2}{\gamma(\boldsymbol{n})}$, which validates \eqref{k02fold1}.

(ii) For the $4$-fold anisotropy, i.e. $m=4$, 
by applying Mathematica to \eqref{tildeFD}, we get
\begin{align}
  \tilde{F}^\theta(\hat{\theta})=
  &2\gamma(\boldsymbol{n})-\frac{16\beta\cos(\hat{\theta}-\theta)
  \cos(\hat{\theta}+3\theta)}{\gamma(\boldsymbol{n})}\nonumber\\
    &\quad -\frac{4\beta^2\cos(\hat{\theta}-\theta)(2\cos(\hat{\theta}+
    7\theta)+\cos(3\hat{\theta}+5\theta)+\cos(5\hat{\theta}+3\theta))}
    {\gamma(\boldsymbol{n})}.
\end{align}
Thus by \eqref{opti} in Remark \ref{rmk 4.1}, we obtain
\begin{equation}
  k_0(\boldsymbol{n})=\max_{\hat{\theta}\in [\theta-\frac{\pi}{2}, \theta+\frac{\pi}{2}]}\tilde{F}^\theta(\hat{\theta})\leq 2\gamma(\boldsymbol{n})+\frac{16\beta+16\beta^2}{\gamma(\boldsymbol{n})}=k_1(\boldsymbol{n}),
\end{equation}
which validates \eqref{k02fold2}.
\end{proof}
\color{black}




\end{document}